\theoremstyle{thmstyleone}%
\newtheorem{theorem}{Theorem}
\newtheorem{lemma}[theorem]{Lemma}%
\newtheorem{corollary}[theorem]{Corollary}%
\theoremstyle{thmstyletwo}%
\newtheorem{example}{Example}%
\theoremstyle{thmstylethree}%
\begin{document}

\title[QC-LDPC Codes from Difference Matrices and Difference Covering Arrays]{QC-LDPC Codes from Difference Matrices and Difference Covering Arrays}


\author[1]{\fnm{Diane} \sur{Donovan}}\email{dmd@maths.uq.edu.au}
\equalcont{These authors contributed equally to this work.}

\author[2]{\fnm{Asha} \sur{Rao}}\email{asha@rmit.edu.au}
\equalcont{These authors contributed equally to this work.}

\author[3]{\fnm{Elif} \sur{\"{U}sk\"{u}pl\"{u}}}\email{uskupluelif@gmail.com }
\equalcont{These authors contributed equally to this work.}

\author*[4]{\fnm{E. \c{S}ule} \sur{Yaz\i c\i}}\email{eyazici@ku.edu.tr }
\equalcont{These authors contributed equally to this work.}

\affil[1]{\orgdiv{Mathematics}, \orgname{The University of Queensland}, \orgaddress{\street{St Lucia}, \city{Brisbane}, \state{QLD}, \country{Australia}}}

\affil[2]{\orgdiv{Mathematical Sciences}, \orgname{RMIT Univerity}, \orgaddress{\city{Melbourne}, \postcode{3000}, \country{Australia}}}

\affil[3]{\orgdiv{Mathematics}, \orgname{Ko\c{c} University}, \orgaddress{\street{Sar\i yer}, \city{Istanbul}, \postcode{34450}, \country{Turkey}}}

\affil*[4]{\orgdiv{Mathematics}, \orgname{Ko\c{c} University}, \orgaddress{\street{Sar\i yer}, \city{Istanbul}, \postcode{34450}, \country{Turkey}}}

\abstract{We give a framework for generalizing LDPC code constructions that use Transversal Designs or related structures such as mutually orthogonal Latin squares. Our construction offers a broader range of code lengths and codes rates. Similar earlier constructions rely on the existence of finite fields of order a power of a prime. In contrast the LDPC codes  constructed here are based on difference matrices and difference covering arrays, structures available for any order $a$. They satisfy the RC constraint and have, for $a$ odd, length $a^2$ and rate $1-\frac{4a-3}{a^2}$,  and  for $a$ even, length $a^2-a$ and rate at least $1-\frac{4a-6}{a^2-a}$. When $3$ does not divide $a$, these LDPC codes have stopping distance at least $8$. When $a$ is odd and both $3$ and $5$ do not divide $a$, our construction delivers an infinite family of QC-LDPC codes with minimum distance at least $10$.
The simplicity of the construction allows us to theoretically verify these properties and analytically determine lower bounds for the minimum distance and stopping distance of the code.
The BER and FER performance of our codes over AWGN (via simulation) is at the least equivalent to codes constructed previously, while in some cases significantly outperforming them.}

\keywords{LDPC codes, QC-LDPC codes, combinatorial construction, difference matrices, difference covering arrays}



\maketitle

\section{Introduction}\label{sec1}

The roll-out of smart devices for IoT and 5G networks necessitate the development of efficient techniques  maximizing the integrity of data  sent or received through open channels, where the data may be subject to distortion, attenuation and Gaussian noise. Error correction codes are being developed to meet these needs, where these codes are designed to significantly enhance the reliability and integrity of transmitted data. While turbo codes have been implemented in smart 3G and 4G devices, the current demand for massive machine type communication, with ultra-reliability and low latency, is much higher, with 5G new radio (NR) requirements reaching through-puts of 5Gb/s. To meet this challenge researchers are investigating the use of LDPC (Low Density Parity Check) and polar codes, see   \cite{Bae2019, RichardsonKudekar2018, StarkBauchetal2020}.
In a 5G network, functionality requirements for control of both information and user data  indicate the need for codes that support variable code rates and lengths \cite{Bae2019}. In addition, storage and computational power can be restricted in modern smart devices, necessitating the development of codes based on low density or sparse cyclically generated parity-check matrices that can deliver low decoding complexity and enable parallelism in encoding and decoding.  LDPC codes have been shown to meet these requirements by delivering effective tools  compatible with 5G encoding and decoding, incorporating variable code lengths and code rates to meet the demands of 5G user data, \cite{Bae2019}.

   Randomly constructed LDPC codes were first introduced by Gallager in 1962 \cite{Gallager62} with MacKay and Neal later showing  that these LDPC codes are able to achieve rates close to channel capacity \cite{MacKayNeal97}. However randomly generated LDPC codes can lead to high storage overheads with complex implementation routines. Thus there is a need for LDPC codes having a compact representation with low storage requirements, that also support efficient encoding and decoding algorithms  \cite{Lally2007}. To address this need, a number of authors \cite{ParkHongNoShin2013,LiLiLi2017,VasMil04,kamiya2007high,KouLinFoss01,Zhang10},
 have proposed constructing  quasi-cyclic parity-check matrices for LDPC codes from combinatorial
 structures such as perfect cyclic difference sets, transversal designs,
  block designs and finite fields. However, the existence of these underlying
   algebraic and combinatorial structures is
   generally restricted to orders a power of a prime, making it difficult to achieve the highly desirable property of flexibility in code lengths and rates.

 In the current paper, gains are made by developing a construction based on cyclically generated orthogonal Latin squares
 that works over the cyclic group of order $a$, where the operation is addition modulo $a$,
      exploiting the fact that cyclic groups  exist for all orders  $a$. The cyclic nature of the  proposed construction provides for reduced storage and enables parallelism in encoding and decoding with increased options for code lengths and rates together with control over other code parameters such as girth and minimum distance.

      Further flexibility is obtained by utilising the combinatorial properties of ubiquitous
difference (covering) arrays, as opposed to, for example, less prevalent perfect cyclic difference sets \cite{LiLiLi2017} or transversal designs. If even greater flexibility is sought, a difference (covering) arrays may be   defined over any  abelian group. In addition, we show through simulations that this  greater range of code length and rates is not at the expense of performance, with the constructed codes performing equal to or better than other codes constructed using similar constructions.

We begin with the requisite coding  theory definitions and background in the next Section,  
going on to define difference matrices and difference covering arrays and the proposed constructions in Section \ref{DCA}. Determination of rates and other properties for the LDPC and QC-LDPC (quasi-cyclic LDPC) codes constructed here are given in Section \ref{properties}, with a  performance analysis given in Section \ref{PA} and concluding remarks in Conclusion Section.

\section{Background}\label{background}
We start with the preliminary definitions.

A $(m,w_c, w_r)$-{\em regular binary LDPC code} ${\cal C}$  of block length $m$ is given by the null
space of an $x\times m$  sparse $(0,1)$ parity-check matrix $H=[H(i,j)]$,  where both the row weight $w_r$ and column weight $w_c$  are constant, see \cite{Gallager62}. Given a parity-check matrix $H=[H(i,j)]$, an $m$-tuple ${\bf v}=(v_0,v_1,v_2,\dots, v_{m-1})$ is  a code word if and only if the  syndrome ${\bf S}$, shown in Equation \eqref{syndrome}, is the zero vector.
\begin{eqnarray}
{\bf S}=\left[\sum_{j =0}^{m-1} H(i,j) v_j\mbox{ mod }2\right], \mbox{ where }0\leq i\leq  x-1.\label{syndrome}
\end{eqnarray}
Note that since the parity-check matrix is binary, we work over ${\Bbb Z}_2$. Also in this paper the rows and columns of all $x\times m$ matrices will be indexed by the set $\{0,1,\dots, x-1\}$ and $\{0,1,\dots, m-1\}$ respectively.

If, after row reduction, the parity-check matrix can be written in the form
\begin{eqnarray*}
H=\left[ P^T\mid I_{m-\kappa}\right],
\end{eqnarray*}
then low density manifests as $w_r\ll m$ and $w_c\ll m-\kappa$. The {\em rate of the code} is defined to be  $ \kappa/m$.
A parity-check matrix $H$ is said to satisfy the {\em RC-constraint} if the inner product of any two rows and any two columns is at most one.
 The {\em distance} of the code is taken to be the minimum Hamming distance between any two distinct code words. Since the code is linear and the zero vector is a code word the distance of the code  is equal to the minimum weight over all the non-zero code words.

In this paper, we seek to construct parity-check matrices that provide
good variability in the  code length and rate while maintaining  the minimum distance of the code to be at least $8$ and at least $10$ for certain cases.
For even $m$, this is achieved  by relaxing the regularity condition.

  We define a  $(m,w_c, \{w_r-1,w_r\})$-{\em near regular binary LDPC code} ${\cal C}$  of block length $m$ as the null
space of a  sparse $(0,1)$ parity-check matrix $H=[H(i,j)]$, with column weight $w_c$ and varying
row weights  $w_r-1$ or $w_r$.
\begin{example}\label{ex-PE}
 The following matrix is an example of a parity-check matrix  for  a $(12,4,\{4,3\})$-near regular binary LDPC code. The length of the code is $12$, the column weight is $w_c=4$ and the row weight is $w_r=3$ or $4$. Minimum distance is $8$. The rank of ${\overline H}_{12}$ is $10$ so the rate of the code is $2/12=0.17$.  This parity-check matrix satisfies the RC-constraint.
\begin{eqnarray*}
{\overline H}_{12}&=&\left[
\begin{array}{cccc|cccc|cccc}
1 & 1 & 1 & 1 & 0 & 0 & 0 & 0 & 0 & 0 & 0 & 0 \\
0 & 0 & 0 & 0 & 1 & 1 & 1 & 1 & 0 & 0 & 0 & 0 \\
0 & 0 & 0 & 0 & 0 & 0 & 0 & 0 & 1 & 1 & 1 & 1 \\
\hline
1 & 0 & 0 & 0 & 1 & 0 & 0 & 0 & 1 & 0 & 0 & 0 \\
0 & 1 & 0 & 0 & 0 & 1 & 0 & 0 & 0 & 1 & 0 & 0 \\
0 & 0 & 1 & 0 & 0 & 0 & 1 & 0 & 0 & 0 & 1 & 0 \\
0 & 0 & 0 & 1 & 0 & 0 & 0 & 1 & 0 & 0 & 0 & 1 \\
\hline
1 & 0 & 0 & 0 & 0 & 0 & 0 & 1 & 0 & 1 & 0 & 0 \\
0 & 1 & 0 & 0 & 1 & 0 & 0 & 0 & 0 & 0 & 1 & 0 \\
0 & 0 & 1 & 0 & 0 & 1 & 0 & 0 & 0 & 0 & 0 & 1 \\
0 & 0 & 0 & 1 & 0 & 0 & 1 & 0 & 1 & 0 & 0 & 0 \\
\hline
0 & 0 & 0 & 1 & 0 & 1 & 0 & 0 & 0 & 0 & 1 & 0 \\
1 & 0 & 0 & 0 & 0 & 0 & 1 & 0 & 0 & 0 & 0 & 1 \\
0 & 1 & 0 & 0 & 0 & 0 & 0 & 1 & 1 & 0 & 0 & 0 \\
0 & 0 & 1 & 0 & 1 & 0 & 0 & 0 & 0 & 1 & 0 & 0 \\
\end{array}
\right]
\end{eqnarray*}
\end{example}
In the above example the code length is relatively small, but the minimum distance is relatively high with respect to the code length $m$.

The construction proposed here follows the principles as set out in  Gallager's 1962 paper, \cite{Gallager62}, with the parity-check matrices for LDPC codes  constructed by
combining   submatrices, with  each column of each submatrix a cyclic shift of the previous. Gallager  defined the first submatrix   and then applied random permutations to the columns of this submatrix to obtain the remaining submatrices.
However, the randomization of the submatrices increases the storage costs resulting in less memory-efficient codes, see \cite{Aror2019}.  To avoid these storage issues  quasi-cyclic
 LDPC codes, or QC-LDPC codes, have been proposed.

 The parity-check
matrix $H$ for a QC-LDPC code can be written as a $K \times L$ array of $z \times z$ circulant matrices $H_{(i,j)}$,
where each circulant $H_{(i,j)}$ for $1\leq i\leq K$, $1\leq j\leq L$, is a square matrix with each row a cyclic shift of the previous. Hence $H_{(i,j)}$ is the zero matrix, a circulant permutation matrix, or the sum of $1\leq \lambda\leq z$ \textit{disjoint} circulant permutation matrices. Adhering to the general framework as set out by Gallager in 1962, we will specify the parity-check matrix in terms of related submatrices.

  Thus the general structure of the parity-check matrices is
\begin{eqnarray}\label{eq:QC}
H &=& \left[
\begin{array}{llll}
H_{(1,1)}& H_{(1,2)}&...&H_{(1,L)}\\
H_{(2,1)}& H_{(2,2)}&...&H_{(2,L)}\\
...\\
H_{(K,1)} &H_{(K,2)}&...&H_{(K,L)}\\
\end{array}
\right].
\end{eqnarray}
Historically, algebraic or combinatorial techniques have been used to specify the submatrices $H_{(i,j)}$ with this compact mathematical representation enhancing the encoding algorithms and minimizing the storage requirements, while maintaining  low computational complexity when implemented \cite{LiLiLi2017}.

In this paper, we will first define our codes to be quasi-cyclic ``like'' in that, cyclic shifts of any code word within each subblock will also be a code word. Then we will show that some infinite subclasses of these parity-check matrices can  be rearranged using row and column permutations to obtain the quasi-cyclic form. The constructed codes with quasi-cyclic structure will be example of codes with Tanner graphs that are cyclic liftings of fully connected base graphs of size $4\times a$ with a lifting factor of $a$. Refer to \cite{Tasdighi} and the references therein  for definitions and related results.

To this end, let $a>3$ be a positive integer. Define  $H$ to be a $(4a)\times (a^2)$ matrix
 \begin{eqnarray}
 H=\left[
 \begin{array}{cccc}
 R_0& R_1& \dots& R_{a-1}\\
 \mathfrak{P}_{0,0} & \mathfrak{P}_{0,1} &\dots & \mathfrak{P}_{0,a-1}\\
 \mathfrak{P}_{1,0}&\mathfrak{P}_{1,1}&\dots &\mathfrak{P}_{1,a-1}\\
 \mathfrak{P}_{2,0}&\mathfrak{P}_{2,1}&\dots &\mathfrak{P}_{2,a-1}\\
 \end{array}\right]\label{eq:H0}
 \end{eqnarray}
 where
 \begin{itemize}
\item[-]  $R_v=[R_v(i,j)]$ is taken to be an $a\times a$ square matrix with row $v$  the vector of all one's and every other row the vector of all zeros.
 \item[-]  for $0\leq u\leq 2$ and $0\leq v\leq a-1$, $\mathfrak{P}_{u,v}$  is taken to be a permutation of the $a\times a$ identity matrix, denoted $I$.
\end{itemize}


Provided  the inner product of any two columns and any two rows of $H$ is at most $1$,  $H$ satisfies the RC-constraint and can be taken as a parity-check matrix for a $(a^2,4,a)$-regular binary LDPC code.

In Section \ref{DCA} we show that for all odd $a>3$, difference matrices can be used to construct parity-check matrices (as described in Equation \eqref{eq:H0}) and hence codes  satisfying the RC-constraint. The specifications of these parity-check matrices, in terms of circulant submatrices, results in reduced storage requirements. Further, since $w_r=w\ll a^2$ and $w_c=4\ll 4a$ these matrices are sparse, leading to reduced decoding complexity. 


The removal of any of the $(4a)\times a$ submatrices of $H$ does not affect the RC-constraint, thus for any $\rho\in \{0,\dots, a-1\}$  we may define ${\overline H}$ to be a $(4a-1)\times (a^2-a)$ matrix of the form
\begin{small}
\begin{eqnarray}
{\overline H}=\hspace{-0.1cm}\left[\hspace{-0.1cm}
\begin{array}{rrrrr}
 R_0& R_1\dots&R_{\rho-1}&R_{\rho+1}\dots& R_{a-1}\\
 \mathfrak{P}_{0,0}&\mathfrak{P}_{0,1}\dots&\mathfrak{P}_{0,\rho-1} &\mathfrak{P}_{0,\rho+1}\dots&\mathfrak{P}_{0,a-1}\\
 \mathfrak{P}_{1,0}&\mathfrak{P}_{1,1}\dots&\mathfrak{P}_{1,\rho-1} &\mathfrak{P}_{1,\rho+1}\dots&\mathfrak{P}_{1,a-1}\\
 \mathfrak{P}_{2,0}&\mathfrak{P}_{2,1}\dots&\mathfrak{P}_{2,\rho-1} &\mathfrak{P}_{2,\rho+1}\dots&\mathfrak{P}_{2,a-1}\\
 \end{array}\right]\hspace{-0.2cm}
 \label{eq:H-rho}
\end{eqnarray}
\end{small}
where the row $\rho$ of all zeros is deleted.
Then under the assumption  that the inner product of any two columns and any two rows of ${\overline H}$ is at most $1$,  ${\overline H}$ can be taken as a parity-check matrix for a $(a^2-a,a,\{a-1,a\})$-near regular binary LDPC code that satisfies the RC-constraint.

The  parity-check matrix ${\overline H}_{12}$  given in Example \ref{ex-PE}
 provides an example of a parity-check matrix ($w_c=4$ and $w_r\in\{3,4\}$)  constructed in this manner.  Figure \ref{fig:H} provides an illustration of the general form of such matrices and is an example of a parity-check matrix for a $(650,4, \{25,26\})$-near regular binary LDPC code.
\begin{figure}
	\centering
	\includegraphics[width=\linewidth]{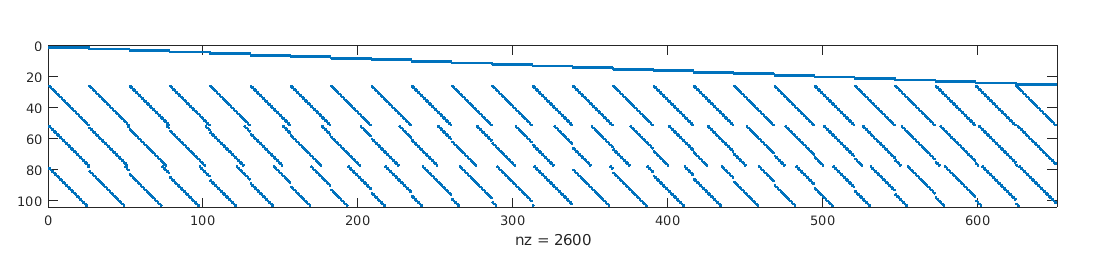}
	\caption{The parity-check matrix of the proposed  $(650,4, \{25,26\})$-near regular binary LDPC code.}
	\label{fig:H}
\end{figure}
It will also be shown in Section \ref{DCA} that for all even $a>2$, difference covering arrays can be used to construct parity-check matrices (as described in Equation \eqref{eq:H-rho})  and  codes satisfying the RC-constraint. As before, the specification of sparse parity-check matrices will result in reduced storage requirements and reduced decoding complexity.

Before we give these constructions it is useful to note that parity-check matrices can be visualised as graphs, with the rows of the parity-check matrix  associated with a set, $C=\{c_0,c_1,\dots, c_{m-\kappa-1}\}$, of  {\em parity-check nodes} and columns with a set $B=\{b_0,b_1,\dots,b_{m-1}\}$, of bits or {\em variable nodes}.  Then, the parity-check matrix $H=[H(i,j)]$ gives the {\em Tanner  graph}, $G(H)$, with vertex set  $C\cup B$
and an edge from $c_i\in C$ to $b_j\in B$ if and only if $H(i,j)=1$. 
As stated in \cite{Gallager63} and recently in \cite{Sarvaghad2020}, the bit error performance (BER) of  LDPC decoding, using the Sum-Product Algorithm (SPA), is affected by cycles of short length in the Tanner graph.
It can be shown that a parity-check matrix $H$ satisfies the RC-constraint if and only if all cycles in the Tanner  graph have length greater than 4, implying that the girth of the Tanner graph is at least 6, see \cite{Zhang10}.

Another factor  effecting the performance of a code is its stopping distance. A {\em stopping set}, $S$, is a subset  of  the set of variable nodes $B$ in $G(H)$, such  that  all  neighbors  of vertices in $S$ are adjacent to at least two vertices of $S$. In terms of the parity-check matrix $H$, a  stopping set $S$ of size $\ell$  is a subset of the columns of $H$ satisfying the property that the induced $(m-\kappa) \times \ell$ submatrix $H$  has  row sum $0$ or at least $2$, for all $m-\kappa$ rows.

 The existence of small stopping sets can adversely affect the performance of an LDPC code, with decoding failure caused when certain variable nodes are affected by errors after transmission. Thus the existence of small stopping sets can greatly  reduce  a code's error correcting capability. Stopping  sets  were  first  described  in  2002  by  Di  et. al.  \cite{Di}, when they were researching the average erasure probabilities of bits and blocks over a binary erasure channel (BEC). See \cite{Di,DPT,Gruner2013} for more details on  stopping sets. Let $\mathbb{S}$ denote  the  collection  of  all stopping sets in a Tanner graph, $G(H)$. Define the {\em stopping distance}, $s^*$, of $G(H)$ as the size of the smallest, non-empty stopping set in $\mathbb{S}$. It is known that the stopping distance of a code aids in the analysis of the code's error floor (an abrupt change in error rate curves arising from iterative decoding) and that the performance of an LDPC code over the BEC is dominated by the small stopping sets in the Tanner graph \cite{Richardson}. The larger the stopping distance, the lower the error floor  of  the  code. Also if a set of columns of the parity-check matrix is linearly dependent, then the corresponding vertices in the Tanner graph should have even degree in the induced subgraph. Thus the stopping distance provides a lower bound for the minimum distance of the code.

We use difference covering arrays (DCA) and difference matrices (DM) (as defined in Section \ref{DCA}) for our constructions of the parity-check matrices. These arrays can also be used to construct orthogonal Latin squares and nearly-orthogonal Latin squares (See \cite{Handbook} for related definitions). Article \cite{Gruner2013} lays the framework for using a full set of orthogonal Latin squares (equivalently transversal designs) to construct parity-check matrices for binary-LDPC codes. However, this analysis considers only orthogonal Latin squares that are constructed using finite fields, which exist only for a power of a prime. In \cite{Ferdosi} authors calculate the stopping distance of SA-LDPC codes  constructed by inflating  transversal designs of prime order, hence only giving codes of length a power of a prime.

In the current paper we significantly extend this work by generalizing the ideas to obtain quasi-cyclic-like codes for all orders even and odd. Together with the simulation results for the performance, we  prove some tight lower bounds for the stopping distance and minimum distance of the constructed codes. We show that all stopping sets of the constructed codes are of size at least $8$ when $a$ is not divisible by $3$. Furthermore, we present examples where our simulation results illustrate that the stopping distance is $10$. More importantly we  analyze the minimum distance of the codes  constructed here and prove that a large infinite family (more specifically when the smallest prime dividing $a$ is greater than 5) of these codes have minimum distance $10$ and are quasi-cyclic in structure.

On the other hand,  our construction is closely related to other well-known constructions in the literature, that focus on finding codes with large girth. If the $R_i$'s (defined in Equation \ref{eq:H0}) are removed the resulting parity check matrix  will be equivalent to those in QC-LDPC codes that are cyclic liftings of fully connected base graphs of type $(3,a)$ with minimum lifting factor for girth $6$. These codes are classified for small $a$ in \cite{Tasdighi}. See \cite{Ranganathan}, \cite{DonovanRaoYazici2017} and \cite{Fossorier} for related work and definitions. Article \cite{Amirzade} classifies codes that are liftings of fully connected base graphs of type $(4,a)$ with minimum lifting factor for girth $6$ and $5\leq a\leq 11$. When the smallest prime dividing $a$ is greater than $5$, the infinite family of quasi-cyclic codes constructed in this paper have girth $6$,  minimum distance $10$ and are examples of codes constructed by liftings of fully connected base graphs of type $(4,a)$ with lifting degree $a$ which is the smallest possible lifting degree.

\section{Difference Matrices and Difference Covering Arrays} \label{DCA}

The parity-check matrices for regular and near regular binary LDPC codes are constructed using difference matrices and difference covering arrays, respectively.

A {\em difference matrix}, DM$(k;a)$,
is defined to be a $a\times k$ array  $D=[D(i,j)]$, where
\begin{itemize}
\item[-] all entries in the first column of $D$ are $0$ and all remaining columns contain each entry $0,\dots, a-1$ precisely once, and
\item[-]  for all pairs of distinct columns, $j$ and $j^\prime$,
the differences $D(i,j)-D(i,j^\prime)$  \mbox{ mod }$a$, for $0\leq i\leq a-1$, are distinct; that is, $\{D(i,j)-D(i,j^\prime)$ \mbox{ mod }$a$ $\mid 0\leq i\leq a-1\}=\{0,\dots,a-1\}$.
\end{itemize}
Difference matrices are well studied in the literature, see \cite{Handbook} for
 constructions. It can be shown that difference matrices with more than 2 columns do not exist
 for even orders, but that DM$(3;a)$ difference matrices exist for all odd $a$. (See \cite{Drake} and \cite{Handbook} Section VI.17). Further, for positive integer $n$, a DM$(3;2n+1)$ corresponds to an additive permutation, with the numbers of distinct DM$(3;2n+1)$ corresponding to the sequence A002047 in Sloane's encyclopedia \cite{Sloane} and current enumerations giving  the number of distinct additive permutations  for $2n+1=23$ (distinct DM$(3;23)$)  as $577,386,122,880$.

 It is clear from the definition, that permuting rows does not change the underlying properties of a
   difference matrix. Hence we will assume that all difference
   matrices are in the standard form, namely  $D(i,1)=i$ for all rows $0\leq i\leq a-1$.
\begin{example} \label{ex:47} $D_5$  is an example of a  DM$(3;5)$, whereas
$D_7$ is a DM$(4;7)$. Notice that the  property above is
 satisfied, as  for instance, the set of differences between the second and third
  rows in $D_5^T$  (the transpose of $D_5$), is $\{0,1,2,-2,-1\}$ which equals $\{0,1,2,3,4\}$
  when working modulo $5$.
\begin{eqnarray*}{ll}\label{eq:47}
\begin{array}{ll}
D_5^T=\left[\begin{array}{ccccc}
0&0&0&0&0\\
0&1&2&3&4\\
0&2&4&1&3\\
\end{array}\right], &
D_7^T=\left[\begin{array}{ccccccc}
0&0&0&0&0&0&0\\
0&1&2&3&4&5&6\\
0&2&4&6&1&3&5\\
0&3&6&2&5&1&4\\
\end{array}\right]
\end{array}
\end{eqnarray*}
\end{example}

 Let $I$ denote the $a\times a$ identity matrix, and $P^i$ be a circulant permutation matrix (CPM) obtained from $I$ by cyclically shifting its rows $i$ positions to the left. We set $P^1=P$ and $P^0=I$. Note that row $r$ of $P^i$ has precisely one entry equal to $1$ at column $r-i\ mod\ a$ and all other entries equal to $0$.
In Construction 1 below, it is demonstrated that, for all odd integers $a$, a DM$(3;a)$ can be used to construct a matrix $H=[H(i,j)]$, of the form given in Equation \eqref{eq:H0}, that is a parity check matrix for a $(a^2,4,a)$-regular binary LDPC code.

\vspace{0.3cm}
\noindent{\bf CONSTRUCTION 1} Let $a\geq 3$ be an odd positive integer and
 $D=[D(i,j)]$ be a DM$(3;a)$ in standard form.
Construct $H$ as given in Equation \eqref{eq:H0} where for $u=0,1,2$ and $v=0,\dots, a-1$,
  the matrices  $R_v=[R_v(i,j)]$ and $\mathfrak{P}_{u,v}$ satisfy
  \begin{eqnarray}
\begin{array}{ll}
R_v(i,j)=1, \mbox{for } i=v \mbox{ and } j=0,\dots,a-1,&  \\
R_v(i,j)=0,  \mbox{otherwise},&\label{eq:HDMR}\\
\mathfrak{P}_{0,v}=P^{D(v,0)}=P^0=I,& \nonumber\\
\mathfrak{P}_{1,v}=P^{D(v,1)}=P^v,& \nonumber\\
\mathfrak{P}_{2,v}=P^{D(v,2)}.& \label{eq:HDMP}
\end{array}
\end{eqnarray}
We then have
\begin{eqnarray}
 H=\left[
 \begin{array}{cccc}
 R_0& R_1& \dots& R_{a-1}\\
 I&I&\dots &I\\
 I&P^1&\dots &P^{a-1}\\
 P^{D(0,2)}&P^{D(1,2)}&\dots &P^{D(a-1,2)}\\
 \end{array}\right].\label{eq:H}
 \end{eqnarray}
The simplicity of this construction and the fact that the underlying combinatorial structure is the cyclic group of odd order, and thus, the binary operation is addition modulo $a$, allows us to verify that the RC-constraint is satisfied as well as analytically determining bounds for the minimum distance of the code, the rate of the code and the size of the minimum stopping set, as shown below.
In addition, it is only necessary to store the DM$(3;a)$. The entries in row $v$ of this array DM$(3;a)$ then determine the non-zero entries in the first column of each of the $a\times a$ submatrices $R_v$ and $\mathfrak{P}_{u,v}$, for $v=0,\dots, a-1$, (more precisely the non-zero entries of $\mathfrak{P}_{1,v}$ and $\mathfrak{P}_{2,v}$), with all remaining columns of $\mathfrak{P}_{1,v}$ and $\mathfrak{P}_{2,v}$  taken as cyclic shifts of the first column.

Furthermore, for $a\geq 5$ we give a family of DM$(3;a)$,   resulting in a parity-check matrix in quasi-cyclic form after row and column permutations. These features  greatly enhance applicability of the resulting $(a^2,4,a)$-regular binary LDPC code.

In addition, Construction 1 can be generalised  and the existence of a DM$(k;a)$ used to construct a $(a^2,k+1,a)$-regular binary LDPC code, but the existence of such DM$(k;a)$'s is not known for all admissible $a$. In particular, as stated above, difference matrices do not exit for even order $a$.  However, for even order we are able to adapt the above construction using the next best structure, namely difference covering arrays where we cover as
  many differences as possible.
  It is this adaption that forms one of the main innovations of this paper, as it demonstrates  the adaption of the above basic construction  to support variability in code lengths and rates. In the second construction,  the resulting parity-check matrix takes a similar form, namely the form given in Equation \eqref{eq:H-rho} where ${\overline H}$ is similar to $H$ (Equation \eqref{eq:H}) except that a $(4a)\times a$ submatrix has been removed, as well as a row of all zeros.

We start with verifying the properties necessary to show that, when $a$ is odd, the $(a^2,4,a)$-regular binary LDPC code satisfies the RC-constraint. This argument is then extended to  $a$ even, and Construction 2 (page \pageref{Construct2}) used to obtain
  a $(a^2-a,4,\{a-1,a\})$-near regular binary LDPC code that satisfies the RC-constraint.

 In what follows let $a\geq 3$ be odd and $D=[D( i,j)]$ be a DM$(3;a)$ in standard form. Take $H$ to be a  $(0,1)$ matrix constructed as in Construction 1 using a DM$(3;a)$. First, we give some straightforward  observations  useful for later proofs.
\begin{lemma}\label{lem:w_c=4}
For any $b\in \{0,\dots, a^2-1\}$, column $b$ has sum $4$. Further, given $x<y<z<t$ such that $H(x,b)=H(y,b)=H(z,b)=H(t,b)=1$, then $0\leq x\leq a-1$, $a\leq y\leq 2a-1$, $2a\leq z\leq 3a-1$ and $3a\leq t\leq 4a-1,$  with
$y=q\mbox{ mod }a$, $z=q+D(x,1)\mbox{ mod }a=q+x\mbox{ mod }a$ and $t=q+D(x,2)\mbox{ mod }a$ for some $0\leq q\leq a-1$.
\end{lemma}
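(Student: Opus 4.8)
The plan is to directly unpack the structure of $H$ as given in Equation \eqref{eq:H}, reading off which rows can possibly contain a $1$ in a fixed column $b$. First I would write $b = av + c$ with $0 \le v \le a-1$ and $0 \le c \le a-1$, so that $b$ lies in the block of columns belonging to the $v$-th vertical strip (i.e.\ the strip containing $R_v$ and the submatrices $\mathfrak{P}_{0,v}, \mathfrak{P}_{1,v}, \mathfrak{P}_{2,v}$). The matrix $H$ has exactly four horizontal bands of $a$ rows each: the $R$-band (rows $0,\dots,a-1$), then the three permutation bands (rows $a,\dots,2a-1$; $2a,\dots,3a-1$; $3a,\dots,4a-1$). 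The claim that the four indices $x<y<z<t$ fall one into each band, with $0\le x\le a-1$, $a\le y\le 2a-1$, $2a\le z\le 3a-1$, $3a\le t\le 4a-1$, then follows once I show each band contributes \emph{exactly one} $1$ to column $b$; summing these gives column sum $4$.

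Next I would verify the contribution from each band separately, which is where the defining properties of $R_v$ and the CPMs come in. In the $R$-band, by Equation \eqref{eq:HDMR} the submatrix $R_v$ has its $v$-th row all ones and every other row zero; since $b$ sits in the $v$-th strip, the unique $1$ occurs at global row $x = v$, giving $0 \le x \le a-1$ and in fact $x = v$. In each permutation band the relevant submatrix is a circulant permutation matrix, which has exactly one $1$ per column, so each of bands $2,3,4$ contributes exactly one $1$; this already establishes column sum $4$ and the band-membership inequalities. It remains to compute the precise row indices. Using the stated fact that row $r$ of $P^i$ has its single $1$ in column $r - i \bmod a$ — equivalently, column $c$ of $P^i$ has its $1$ in row $c + i \bmod a$ — I would set $q := (c + D(v,0)) \bmod a = c \bmod a = c$, since $D(v,0)=0$ in standard form. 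Then band $2$ uses $\mathfrak{P}_{0,v} = P^{0} = I$, so its $1$ is at local row $c = q$, i.e.\ global row $y$ with $y \equiv q \pmod a$; band $3$ uses $\mathfrak{P}_{1,v} = P^{v} = P^{D(v,1)}$, placing its $1$ at local row $c + D(v,1) \bmod a = q + D(v,1) = q + v \bmod a$ (recall $x=v$); and band $4$ uses $\mathfrak{P}_{2,v} = P^{D(v,2)}$, placing its $1$ at local row $c + D(v,2) \bmod a = q + D(v,2) \bmod a$. Translating these local row indices back to global indices via the band offsets $a$, $2a$, $3a$ recovers exactly the stated expressions for $y$, $z$, and $t$, with $D(x,1) = D(v,1) = v$ by the standard-form assumption.

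I expect no serious obstacle here — the lemma is a bookkeeping statement unpacking the construction — but the one point requiring care is the indexing convention for $P^i$. I would keep scrupulous track of whether ``row $r$ of $P^i$ has its $1$ in column $r-i$'' or the dual ``column $c$ has its $1$ in row $c+i$,'' since an off-by-sign error in $\bmod\, a$ would corrupt every subsequent difference computation. Because the later RC-constraint and distance arguments all hinge on the difference $t - z \equiv D(v,2) - D(v,1)$ and similar expressions being governed by the DM property, I would state the row formulas in exactly the form $q$, $q+x$, $q+D(x,2)$ used downstream, so the parameter $q$ and the reference row index $x=v$ can be carried verbatim into those proofs.
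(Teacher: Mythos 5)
Your proof is correct and is exactly the straightforward unpacking of Construction~1 that the paper intends: the paper states Lemma~\ref{lem:w_c=4} without proof as an immediate observation, and your band-by-band verification (one $1$ from $R_v$ at row $x=v$, one per CPM band at local rows $q$, $q+D(v,1)$, $q+D(v,2)$ modulo $a$) reproduces precisely the description recorded in Equation~\eqref{eq:xyzt}. Your care with the CPM indexing convention (column $c$ of $P^i$ has its $1$ in row $c+i \bmod a$) is well placed, since that sign convention is what the later RC-constraint and distance arguments rely on.
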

Below is some notation that will be used in later proofs.
The rows of $H$ are partitioned into four subsets denoted $V_{-1}=\{0,\dots,a-1\}$, $V_{0}=\{a,\dots,2a-1\}$, $V_{1}=\{2a,\dots,3a-1\}$ and
$V_{2}=\{3a,\dots,4a-1\}$, that is, respectively, in to the rows of $R_v$, and $\mathfrak{P}_{u,v}$, $u=0,1,2$. Further for each  column \\$b\in \{0,\dots, a^2-1\}$, define
\begin{eqnarray}
\begin{array}{l}
C_b=\{x,y,z,t\mid H(x,b)=H(y,b)=H(z,b)=H(t,b)=1\},\label{eq:Cb}
\end{array}
\end{eqnarray}
that is, $C_b$ gives the set of rows of $H$ with entry $1$ in column $b$.
Consequently,\begin{eqnarray}
&C_{xa+q}=\{x,y,z,t\}=\{x,q+a,(q+D(x,1)\mbox{ mod }a)+2a,\nonumber\\
&(q+D(x,2)\mbox{ mod }a)+3a\}. \label{eq:xyzt}
\end{eqnarray}
for $0\leq x,q\leq a-1$.
\begin{lemma}\label{lm:RPIproperties}
The inner product of any two rows of $H$ is at most one.
\end{lemma}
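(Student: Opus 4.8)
The plan is to show that the inner product of any two distinct rows of $H$ is at most $1$ by exploiting the block structure of $H$ and the defining difference property of the DM$(3;a)$. I will organise the argument by the partition of the rows into the four bands $V_{-1}, V_0, V_1, V_2$ introduced above, and treat the cases according to which bands the two rows $i$ and $i'$ lie in. The inner product of rows $i$ and $i'$ counts the columns $b$ with $H(i,b)=H(i',b)=1$; by Lemma~\ref{lem:w_c=4} every column has exactly one nonzero entry in each band, so two rows in the \emph{same} band never share a nonzero column and their inner product is $0$. Thus only pairs of rows in \emph{distinct} bands need to be examined.

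First I would dispose of the cases where one of the two rows lies in $V_{-1}$ (an $R_v$ row). Such a row has its $a$ ones confined to the columns of a single block $R_v$, namely columns $\{va,\dots,va+a-1\}$, while any row in $V_0\cup V_1\cup V_2$ is a row of a permutation matrix and so has exactly one $1$ inside that block; hence the inner product is at most $1$. For two rows of $V_{-1}$ the supports are disjoint blocks, giving inner product $0$. The substance of the lemma therefore lies in the remaining three cases: rows drawn from two distinct bands among $V_0, V_1, V_2$. Here I would use the explicit description from Equation~\eqref{eq:xyzt}: a column $b=xa+q$ places its nonzero entries in rows $q+a$ (band $V_0$), $(q+D(x,1))+2a$ (band $V_1$), and $(q+D(x,2))+3a$ (band $V_2$). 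Fixing two bands and asking for two distinct columns $b=xa+q$ and $b'=x'a+q'$ that contribute $1$ in the \emph{same} two rows translates directly into a pair of modular equations in $x,q,x',q'$.

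Concretely, suppose rows in bands $V_s$ and $V_{s'}$ (with $s,s'\in\{0,1,2\}$, $s\ne s'$) are both hit by two columns. Writing the shift of band $V_u$ as $D(x,u)$ (with $D(x,0)=0$), the equality of the $V_s$-row forces $q+D(x,s)\equiv q'+D(x',s)\pmod a$, and equality of the $V_{s'}$-row forces $q+D(x,s')\equiv q'+D(x',s')\pmod a$. Subtracting these gives $D(x,s)-D(x,s')\equiv D(x',s)-D(x',s')\pmod a$. By the difference-matrix property applied to the two columns $s,s'$ of $D$, the map $x\mapsto D(x,s)-D(x,s')\bmod a$ is a bijection on $\{0,\dots,a-1\}$, so this congruence forces $x=x'$, and then the first congruence forces $q=q'$, i.e.\ $b=b'$. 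This contradicts the assumption of two distinct columns and establishes that the inner product is at most $1$ in every band-pair case.

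I expect the main obstacle to be bookkeeping rather than depth: one must be careful that the three index pairs $(s,s')$ each invoke a genuinely distinct pair of columns of $D$ (the pairs $\{0,1\},\{0,2\},\{1,2\}$), and that the standard-form normalisation $D(x,0)=0$, $\mathfrak{P}_{0,v}=I$, and $\mathfrak{P}_{1,v}=P^v$ are used consistently so that each band's shift is correctly identified as the corresponding column of $D$. The one genuine idea is the reduction, via subtraction of the two congruences, to the bijectivity of the column-difference map, which is exactly the content of the DM$(3;a)$ definition; once this is in place the remaining steps are immediate. I would close by remarking that the argument uses only that any two columns of $D$ have the full set of differences, so it adapts verbatim to the even-order construction via difference covering arrays, with the caveat that there the difference map may miss one value and so the case analysis must track the uncovered difference separately.
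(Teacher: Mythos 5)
Your proposal is correct and follows essentially the same route as the paper's proof: the trivial cases (rows in the same band, or one row in $V_{-1}$) are handled via the block/permutation structure, and the substantive case of two rows in distinct bands among $V_0,V_1,V_2$ is reduced, by subtracting two modular equations, to a violation of the distinct-differences property of the DM$(3;a)$. The only cosmetic difference is that you parameterise that case by two columns sharing a pair of rows while the paper parameterises it by two rows sharing ones in two blocks $v,v'$; the resulting congruences are identical.
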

\begin{proof}
Since $\mathfrak{P}_{u,v}$ is a permutation of the identity matrix for $u=0,1,2$ and $v=0,\dots, a-1$, it follows immediately that
\begin{itemize}
\item[(a)]\label{InProA} The inner product of any two rows of  $R_v$ or $\mathfrak{P}_{u,v}$ is zero. Hence, the inner product of any two rows of $H$ in the same subset $V_i$ is $0$ for all $-1\leq i\leq 2$.
\item[(b)]\label{InProB} The inner product of any row of $\mathfrak{P}_{u,v}$ with any row of $R_v$ is at most one. Furthermore, for any row $r$ in $V_{-1}$, there is precisely one $v$ such that row $r$ of $R_v$ is not the zero vector. Hence, the inner product of any row of $V_{-1}$ with any row of $V_i$ is exactly one for $0\leq i\leq 2$.
\item[(c)] \label{InProC}
Finally, assume that there exists  rows $r$ and $r^\prime$ and distinct $v$ and $v^\prime$ such that the inner product of rows $r$ and $r^\prime$ of $\mathfrak{P}_{i,v}$ and $\mathfrak{P}_{j,v}$ is equal to one as is the inner product of rows
$r$ and $r^\prime$ of $\mathfrak{P}_{i,v^\prime}$ and $\mathfrak{P}_{j,v^\prime}$ for some $0\leq i<j\leq 2$. This implies that \begin{eqnarray*}
r-D(v,i)\mbox{ mod }a&=&r^\prime-D(v,j)\mbox{ mod }a\mbox{ and}\\
r-D(v^\prime,i)\mbox{ mod }a&=&r^\prime-D(v^\prime,j)\mbox{ mod }a.
\end{eqnarray*}
Rearranging the difference of these equations gives \begin{eqnarray*}
D(v,i)-D(v,j)&=&D(v^\prime,i)-D(v^\prime,j)\mbox{ mod }a,\end{eqnarray*}
which contradicts the definition of a DM$(3;a)$. Hence the inner product of any two rows $r\in V_i$ and $r^\prime\in V_j$, for $0\leq i< j\leq 2$, is at most one. On the other hand $r$ and $r^\prime$  both contain one in the same column when $r^\prime=r+D(v,i)-D(v,j)\ mod\ a$. Therefore the inner product of any two rows $r\in V_i$ and $r^\prime\in V_j$, for $0\leq i< j\leq 2$, is exactly $1$.
\end{itemize}
\end{proof}
\begin{lemma}\label{lem:H} Let $a\geq 3$ be odd.
  Then $H$ is a parity-check matrix for a $(a^2,4,a)$-regular binary LDPC code that satisfies the  RC-constraint.
\end{lemma}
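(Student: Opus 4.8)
The plan is to notice that the statement bundles four separate assertions, three of which are already available. The block length $a^2$ is simply the number of columns of $H$; the column weight $w_c=4$ is exactly Lemma~\ref{lem:w_c=4}; and the bound of $1$ on the inner product of any two rows is exactly Lemma~\ref{lm:RPIproperties}. So the only genuinely new work is to (i) verify that every row has weight $a$, so that the code is $(a^2,4,a)$-regular, and (ii) establish the column half of the RC-constraint, namely that the inner product of any two distinct columns is at most $1$.

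The row-weight claim I would dispatch directly from \eqref{eq:H}. A row in $V_{-1}$ meets only the blocks $R_0,\dots,R_{a-1}$, and by \eqref{eq:HDMR} row $r$ is the all-ones vector in $R_r$ and zero in every other $R_v$, so it has weight exactly $a$. A row in $V_0$, $V_1$, or $V_2$ meets the $a$ submatrices $\mathfrak{P}_{u,0},\dots,\mathfrak{P}_{u,a-1}$, each a permutation of $I$ contributing a single $1$ to that row, again giving weight $a$.

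For the column RC-constraint I would work with the row-support sets $C_b$ of \eqref{eq:Cb}, since the inner product of columns $b$ and $b'$ equals $|C_b\cap C_{b'}|$. Using the explicit description in \eqref{eq:xyzt}, it suffices to show $|C_{xa+q}\cap C_{x'a+q'}|\le 1$ whenever $xa+q\ne x'a+q'$. If $x=x'$ (so $q\ne q'$), the two columns share the row $x\in V_{-1}$, while their entries in $V_0,V_1,V_2$ all differ because $q\ne q'$ forces $q+a\ne q'+a$, $q+x\not\equiv q'+x$, and $q+D(x,2)\not\equiv q'+D(x,2)\pmod{a}$; hence the intersection has size $1$. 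If $x\ne x'$, the rows in $V_{-1}$ already differ, so a coincidence can arise only from $V_0$ (when $q=q'$), from $V_1$ (when $q+x\equiv q'+x'$), or from $V_2$ (when $q+D(x,2)\equiv q'+D(x',2)$), the latter two modulo $a$. I would then rule out any two of these holding simultaneously: $V_0$ with $V_1$ forces $x\equiv x'$; $V_0$ with $V_2$ forces $D(x,2)\equiv D(x',2)$, hence $x=x'$ since column $2$ of a DM is a permutation; and $V_1$ with $V_2$ forces $x-D(x,2)\equiv x'-D(x',2)$, i.e.\ $D(x,1)-D(x,2)\equiv D(x',1)-D(x',2)\pmod{a}$ after using $D(i,1)=i$. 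Each case contradicts $x\ne x'$, so at most one coincidence occurs and $|C_b\cap C_{b'}|\le 1$.

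The main obstacle is the final subcase, $V_1$ with $V_2$, which is precisely where the defining distinct-differences property of the difference matrix must be invoked, mirroring part~(c) of Lemma~\ref{lm:RPIproperties}; the standard-form normalisation $D(i,1)=i$ is what reduces the coincidence to a difference between columns $1$ and $2$ of $D$. Once both halves of the RC-constraint are established, combining them with the column weight $4$ from Lemma~\ref{lem:w_c=4} and the row weight $a$ shows that $H$ is a parity-check matrix for a $(a^2,4,a)$-regular binary LDPC code satisfying the RC-constraint, as claimed.
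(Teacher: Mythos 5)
Your proposal is correct and follows essentially the same route as the paper: the paper likewise reduces the column half of the RC-constraint to bounding $\lvert C_b\cap C_{b'}\rvert$ via \eqref{eq:xyzt}, handles the $v=v'$ case exactly as you do, and in the remaining case rules out simultaneous coincidences in $V_1$ and $V_2$ by deriving $D(v,1)-D(v,2)\equiv D(v',1)-D(v',2)\pmod{a}$, contradicting the difference-matrix property. Your pairwise organisation of the coincidence cases (and your explicit row-weight check, which the paper leaves implicit) is only a cosmetic repackaging of the paper's split into $q=q'$ and $q\ne q'$.
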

\begin{proof}
The fact that the inner product of any two rows of $H$ is at most one follows from Lemma \ref{lm:RPIproperties}.

Now consider any two distinct columns $b=va+q$ and $b^\prime =v^\prime a+q^\prime$, where $0\leq v,v^\prime, q,q^\prime\leq a-1$ and $b^\prime> b$, of the parity-check matrix.

By definition the set of rows that contain $1$ in the columns $b$ and $b^\prime$  is given by $C_b$ and $C_{b^\prime}$ respectively. The inner product of column $b$ and $b^\prime$ is given by $\mid\hspace{-0.1cm} C_b\cap C_{b^\prime}\hspace{-0.1cm}\mid$. By Equation \eqref{eq:xyzt},
$C_b=C_{av+q}=
\{v,q+a,(q+D(v,1)\mbox{ mod }a)+2a,
(q+D(v,2)\mbox{ mod }a)+3a\}$ and
$C_{b^\prime}=C_{av^\prime+q^\prime}=
\{v^\prime,q^\prime+a,(q^\prime+(D(v^\prime,1)\mbox{ mod }a)+2a,
(q^\prime+D(v^\prime,2)\mbox{ mod }a)+3a\}$

We then have the following cases:

Case 1: If $v=v^\prime$, since $b$ and $b^\prime$ are distinct columns,  $q\neq q^\prime$. We have:
\begin{eqnarray*}
q+a&\neq& q^\prime+a,\\
 q+D(v,1)&\neq& q^\prime+D(v^\prime,1)\mbox{ mod }a \mbox{ and }\\
 q+D(v,2)&\neq& q^\prime+D(v^\prime,2)\mbox{ mod }a.
 \end{eqnarray*}
Hence the inner product of column $b$ and $b^\prime$ is $1$.

Case 2: If $v\neq v^\prime$ then $D(v,1)\neq D(v^\prime,1)$ and $D(v,2)\neq D(v^\prime,2)$.

 Case 2.1 If $q=q^\prime$ then $a+q=a+q^\prime$, $q+D(v,1)\neq q^\prime+D(v^\prime,1)$ and $q+D(v,2)\neq q^\prime+D(v^\prime,2)\mbox{ mod }a$ and the inner product of column $b$ and $b^\prime$ is $1$.

Case 2.2 If $q\neq q^\prime$ then $q+a\neq q^\prime+a$. Now assume
\begin{eqnarray*}
(q+D(v,1)\mbox{ mod }a)+2a&=& (q^\prime+D(v^\prime,1)\mbox{ mod }a)+2a,\\
(q+D(v,2)\mbox{ mod }a)+3a&=&(q^\prime+D(v^\prime,2)\mbox{ mod }a)+3a.
\end{eqnarray*}

This implies $q-q^\prime=D(v^\prime,1)-D(v,1)=D(v^\prime,2)-D(v,2)$. Hence $D(v,1)-D(v,2)=D(v^\prime,1)-D(v^\prime,2)$ which contradicts the definition of a $DM(3;a)$. Therefore the inner product of column $b$ and $b^\prime$ is at most $1$.

Thus the inner product of any pair of columns of  $H$, is at most one, as required. Hence $H$ satisfies the RC-constraint.
\end{proof}

Let $n$ be a positive integer. We now present a second construction replacing the DM$(3;2n+1)$ with a difference covering array DCA$(3;2n)$ that exists for all $n\geq 2$.

Define a {\em difference covering array}, DCA$(k;2n)$,  to be a $2n\times k$ array
 $D=[D(i,j)]$, where
\begin{itemize}
\item[-]  all entries in the first column of $D$ are $0$ and the remaining columns
contain each  entry $0,1,\dots, 2n-1$  precisely once, and
\item[-]  for all pairs of distinct non-zero columns, $j$ and $j^\prime$,  the differences $D(i,j)-D(i,j^\prime)$  \mbox{ mod }$2n$, for $0\leq i\leq 2n-1$, are non-zero and cover the set $\{1,2,\dots,2n-1\}$.
\end{itemize}

Similar to difference matrices we will assume that all difference covering arrays
studied here are in standard form with  $D(i,1)=i$, for all $0\leq i\leq 2n-1$.
\begin{example} \label{ex46} $D_4$  is an example of a DCA$(3;4)$, whereas
  $D_6$ is a DCA$(4;6)$. Notice that the first property above is
   satisfied and, for instance, the set of differences between the second and third
    rows in $D_4^T$  (the transpose of $D_4$) is $\{-1,-2,2,1\}$ which equals
    $\{3,2,2,1\}$ when working modulo $4$.
\begin{eqnarray}
\begin{array}{ll}
D_4^T=\left[\begin{array}{ccccc}
0&0&0&0\\
0&1&2&3\\
1&3&0&2\\
\end{array}\right]&
D_6^T=\left[\begin{array}{ccccccc}
0&0&0&0&0&0\\
0&1&2&3&4&5\\
1&3&5&0&2&4\\
3&0&4&1&5&2\\
\end{array}\right]
\end{array}
\end{eqnarray}
\end{example}
In a difference covering array there are $2n$ rows and, since the
 $2n$ differences $D(i,j)-D(i,j^\prime)$ are non-zero, it can be shown that for any pair of distinct non-zero
 columns $j$ and $j^\prime$ there exists
 two rows $r_0$ and $r_1$ such that $D(r_0,j)-D(r_0,j^\prime)=n= D(r_1,j)-D(r_1,j^\prime)$.
  That is, the repeated difference is  $n$ (see \cite{DDHKR} for a proof).
While not a lot is known for general $k$,  when $k=3$ it is known that as $n$ grows the
 number of distinct  DCA$(3;2n)$ grows significantly, see \cite{DDKM}. For further results on
  difference covering arrays see \cite{Yin1} and \cite{Yin2}.
  
\vspace{0.3cm}
\noindent{\bf CONSTRUCTION 2} \label{Construct2} Let $a\geq 4$ be an even positive integer,
  $D=[D(i,j)]$ be a DCA$(3;a)$, where $r_0$ represents precisely one of the two rows where $D(r_0,2)-D(r_0,1)=n$.
  Then  construct ${\overline H}$ as given in
  Equation \eqref{eq:H-rho}  where, for $u=0,1,2$,
  $v=0,\dots, a-1$ and  $v\neq r_0$, the matrices  $R_v=[R_v(i,j)]$ and
  $\mathfrak{P}_{u,v}$ are as given below and  the row $r_0$ of all zeros has been removed.
\begin{eqnarray}
\begin{array}{ll}
R_v(i,j)=1, \mbox{for } i=v \mbox{ and } j=0,\dots,a-1,\nonumber \\
R_v(i,j)=0,  \mbox{otherwise},\\ \label{eq:HDCAR}
\mathfrak{P}_{0,v}=P^{D(v,0)}=P^0=I,& \nonumber\\
\mathfrak{P}_{1,v}=P^{D(v,1)}=P^v,& \nonumber\\
\mathfrak{P}_{2,v}=P^{D(v,2)}& \label{eq:HDCAP}
\end{array}
\end{eqnarray}
\begin{example}\label{pbibd4}
Take $D_4$ as set out in Example \ref{ex46} with $a=4$ and $r_0=2$. Then Construction 2 gives the parity-check matrix displayed in Example \ref{ex-PE}.
\end{example}
\begin{lemma}\label{lem:H-rho} Let $a\geq 4$ be even, then ${\overline H}$ given in Construction 2 is a parity-check matrix for a $(a^2-a,4,\{a-1,a\})$-near regular binary LDPC code that satisfies the  RC-constraint.
\end{lemma}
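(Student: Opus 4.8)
The plan is to mirror the proofs of Lemma~\ref{lm:RPIproperties} and Lemma~\ref{lem:H}, adapting each step to the fact that a DCA$(3;a)$ replaces the DM$(3;a)$ and that one column block together with one all-zero row has been deleted. I would split the argument into the dimensions and weights of $\overline{H}$, the inner product of any two rows, and the inner product of any two columns.

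First I would pin down the parameters. Since $H$ has $a$ column blocks each of width $a$ and Construction~2 deletes the single block indexed by $r_0$, the matrix $\overline{H}$ has $a^2-a$ columns, so the block length is $m=a^2-a$. Each surviving column still meets the top block in one $1$ (from $R_v$) and each permutation block in one $1$, so $w_c=4$. For the row weights, every row of a block $\mathfrak{P}_{u,v}$ carries a single $1$, so each row in $V_0\cup V_1\cup V_2$ has weight $a-1$ after one block is removed; a row $i\neq r_0$ in $V_{-1}$ keeps its $a$ ones in the surviving block $R_i$ and so has weight $a$, whereas row $r_0$ of $V_{-1}$ had all its ones in the deleted block $R_{r_0}$ and is precisely the all-zero row removed in Construction~2. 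Hence the surviving row weights lie in $\{a-1,a\}$, giving the claimed $(a^2-a,4,\{a-1,a\})$-near regular code, and removing a zero row does not alter the null space.

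Next I would show the inner product of any two rows is at most one, reproducing the three cases of Lemma~\ref{lm:RPIproperties}. Parts (a) and (b) use only that each $\mathfrak{P}_{u,v}$ is a permutation matrix and each $R_v$ has a single nonzero (all-ones) row, so they carry over unchanged. The difference structure enters only in part (c), where two rows agreeing in distinct column blocks $v,v'$ force $D(v,i)-D(v,j)\equiv D(v',i)-D(v',j)\pmod{a}$ for some $0\le i<j\le 2$. For $(i,j)=(0,1)$ and $(0,2)$ the relevant differences are $-v$ and $-D(v,2)$, which run over a permutation of $\{0,\dots,a-1\}$ and are therefore distinct; the all-zero column $0$ of the DCA causes no difficulty here. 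The pair $(i,j)=(1,2)$ is the crucial one: in a DCA$(3;a)$ the differences $D(v,2)-D(v,1)\pmod{a}$ are nonzero and cover $\{1,\dots,a-1\}$ with exactly one repeat, and that repeated value is $n=a/2$, attained at exactly the two rows $r_0$ and $r_1$. Since Construction~2 deletes the column block $r_0$, over the surviving indices $v\neq r_0$ the values $D(v,2)-D(v,1)$ are pairwise distinct, the coincidence in part (c) cannot occur, and the row inner product is at most one.

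Finally I would bound the inner product of two distinct surviving columns $b=va+q$ and $b'=v'a+q'$ with $v,v'\neq r_0$, following the case analysis of Lemma~\ref{lem:H}. Cases~1 and~2.1 use only that columns $1$ and $2$ of the array are permutations and are untouched by the passage from a DM to a DCA. Case~2.2 again reduces to the identity $D(v,1)-D(v,2)=D(v',1)-D(v',2)$ with $v\neq v'$, which can hold only for the repeated difference, that is, only when $\{v,v'\}=\{r_0,r_1\}$; as every column with $v=r_0$ has been removed this never occurs, so the contradiction stands and the column inner product is at most one. Together with the row bound this yields the RC-constraint. I expect the main obstacle to be exactly the control of this repeated difference in part (c) and Case~2.2: the entire construction is engineered so that deleting the block at $r_0$ destroys the unique offending coincidence, and the substance of the proof is checking that this single deletion is both necessary and sufficient while leaving all other cases identical to the odd case.
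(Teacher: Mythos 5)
Your proposal is correct and takes essentially the same route as the paper, whose entire proof consists of the remark that the argument of Lemma~\ref{lem:H} carries over with the row and column ranges relabeled appropriately. In fact you make explicit the one point the paper leaves implicit: that in a DCA$(3;a)$ the unique repeated difference $n=a/2$ between columns $1$ and $2$ occurs only at rows $r_0$ and $r_1$, so deleting the column block indexed by $r_0$ is precisely what removes the only possible failure of the row-inner-product argument (part (c)) and of Case 2.2 for the columns.
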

\begin{proof}
The proof, for $a$ even, follows as in the proof for Lemma \ref{lem:H} ($a$ odd) where the row and column ranges have been relabeled appropriately.
\end{proof}
We now examine the properties of the above codes and get exact bounds for the rate of the code as well as minimum distance and the size of the smallest stopping sets.

From here on we assume that

--For $a$ even, $D=[D(i,j)]$ is a DCA$(3;a)$ and ${\overline H}$, as given in Construction 2 on $D=[D(i,j)],$ is the parity-check matrix of a
$(a^2-a,4,\{a-1,a\})$-near regular binary LDPC code.

--For $a$ odd, $D=[D(i,j)]$ is a DM$(3;a)$ and $H$ as given in Construction 1 on $D=[D(i,j)]$ is the parity-check matrix of a $(a^2,4,a)$-regular binary LDPC code.

\section{Properties of LDPC Codes from DM$(3;2n+1)$ and DCA$(3;2n)$}\label{properties}
In this section we prove that the LDPC codes constructed in this paper have no cycles of length less than 6 in the associated Tanner graph. We study the rank and the stopping distance of these codes, determine the minimum distance of a particular class of these codes, and also give an  algebraic proof that this particular class of codes are quasi-cyclic in structure.

We show that the rank of the parity-check matrices $H$ and ${\overline H}$  respectively, equals $4a-3$ for odd $a$ and  less than or equal to $4a-6$ for even $a$. (Note that since we are working with $(0,1)$ matrices the linear dependence of rows is calculated bitwise over the binary field ${\Bbb Z}_2$.)
Then, recalling that for any  matrix $A$  with $m$ columns the $\mbox{rank}(A)+\mbox{nullity}(A)=m$, for $a$ odd, the dimension  of the code with parity check matrix $H$ equals $a^2-4a+3$,  and, for $a$ even,   the dimension of the code with parity check matrix ${\overline H}$ is greater than  or equal to $a^2-5a+6.$ Thus we will verify that the rate of the $(a^2,4,a)$-regular binary LDPC code in Construction 1 is equal to Identity \eqref{eq:rankH} (below) while the rate of the $(a^2-a,4,\{a-1,a\})$-near regular binary LDPC code in Construction 2 is greater than or equal to Identity \eqref{eq:rankH-rho}:
\begin{eqnarray}
Rate&=&1-\frac{(4a-3)}{a^2},\mbox{ for } a\mbox{ odd } (H),\mbox{ and}\label{eq:rankH}\\
Rate&=&1-\frac{(4a-6)}{a^2-a}, \mbox{ for } a\mbox{ even } ({\overline H}).\label{eq:rankH-rho}
\end{eqnarray}
The rate has been enumerated in Table \ref{rate-table} for $a=12$ to $30$. It can be seen how the rate of the code increases as $m$ increases. Indeed it can be shown algebraically that the rank tends to $1$ as $a$ tends to infinity.
\begin{table}[ht]
\caption{The rates of some LDPC codes constructed in this paper for $a \geq 12$.}\label{rate-table}
\begin{center}
\begin{tabular}{|l|p{2cm}|p{2.5cm}|p{2cm}|}
\hline
$a$ odd&Code Length $m$ & Code Dimension& Code Rate \\
\hline
13 & 169 & 120 & 0.71\\
15 & 225 & 168 & 0.75 \\
17 & 289 & 224 & 0.78\\
19& 361& 288 & 0.80\\
21 & 441 & 360 & 0.82\\
23& 529 & 440 & 0.83\\
25 & 625 & 528 & 0.84 \\
27 & 729 & 624 & 0.86 \\
29 & 841 & 728 & 0.87  \\
39 & 1521 & 1368 & 0.90\\
\hline
$a$ even&Code Length $m$ & Code Dimension Lower Bound & Code Rate Lower Bound    \\\hline
12 & 132 & 90 & 0.68\\
14 & 182 & 132 & 0.72 \\
16 & 240 & 182 & 0.76\\
18& 306& 240 & 0.78\\
20 & 380 & 306 & 0.81\\
22& 462 & 380 & 0.82\\
24 & 552 & 462 & 0.84 \\
26 & 650 & 552 & 0.85 \\
28 & 756 & 650 & 0.86  \\
30 & 870 & 756 & 0.87\\
\hline
\end{tabular}
\end{center}
\end{table}

We start by noting that  Lemmata \ref{lem:H}  and \ref{lem:H-rho} verify that for both $H$ and ${\overline H}$ the inner product of any two columns is less than or equal to one giving the following bound on the girth of the Tanner graph.
\begin{lemma}\label{girth}
The Tanner graph of the constructed $(a^2,4,a)$-regular and the $(a^2-a,4,\{a,a-1\})$-near regular binary LDPC codes have girth at least 6.
\end{lemma}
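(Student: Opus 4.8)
The plan is to establish the girth bound by translating the combinatorial claim into the language of cycles in the Tanner graph and then invoking the RC-constraint already proven. Recall from the discussion preceding the statement that a parity-check matrix satisfies the RC-constraint if and only if all cycles in its Tanner graph have length greater than $4$, and that this is equivalent to saying the inner product of any two rows and any two columns is at most $1$. Since the Tanner graph is bipartite with parts $C$ (check nodes, indexing rows) and $B$ (variable nodes, indexing columns), every cycle has even length, so the only forbidden short cycle below length $6$ is a $4$-cycle. Thus the entire statement reduces to showing that $G(H)$ and $G(\overline{H})$ contain no $4$-cycles.

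First I would observe that a $4$-cycle in the Tanner graph is precisely a closed walk $b \to c \to b' \to c' \to b$ with $b,b' \in B$ distinct and $c,c' \in C$ distinct, which by the edge rule $H(i,j)=1$ means there exist two distinct columns $b,b'$ and two distinct rows $c,c'$ with $H(c,b)=H(c',b)=H(c,b')=H(c',b')=1$. The existence of such a configuration is exactly the statement that columns $b$ and $b'$ share at least two common rows in which both have a $1$, i.e.\ that their inner product is at least $2$. Hence a $4$-cycle exists if and only if some pair of columns has inner product at least $2$.

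Next I would simply appeal to Lemmata \ref{lem:H} and \ref{lem:H-rho}, which establish that both $H$ (for odd $a$) and $\overline{H}$ (for even $a$) satisfy the RC-constraint, and in particular that the inner product of any two distinct columns is at most $1$. This rules out the $4$-cycle configuration described above. Combined with the bipartite parity argument, which forbids odd cycles of length $3$ or $5$, the absence of $4$-cycles forces the girth to be at least $6$ in each case.

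The argument is essentially a bookkeeping exercise rather than a genuine obstacle, since all the substantive work lies in the already-proven column inner-product bounds. The only point deserving care is the equivalence between ``inner product of two columns is at most $1$'' and ``no $4$-cycle,'' together with the observation that the Tanner graph's bipartiteness eliminates length-$3$ and length-$5$ cycles automatically; I would state this equivalence explicitly (citing the characterization attributed to \cite{Zhang10} in the background section) so that the girth conclusion follows immediately for both the regular and near-regular constructions.
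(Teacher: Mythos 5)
Your proposal is correct and follows the paper's approach exactly: the paper likewise deduces the girth bound directly from Lemmata \ref{lem:H} and \ref{lem:H-rho} (the column inner-product bound of the RC-constraint) together with the background equivalence, attributed to \cite{Zhang10}, between the RC-constraint and the absence of cycles of length at most $4$. Your additional spelling-out of the $4$-cycle characterization and the bipartite parity argument is just an expanded version of the same reasoning.
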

In Lemma \ref{lem:rankH} we show  that, for $a$ odd, the matrix $H$ has rank $4a-3$. Then, in Lemma \ref{lem:rankH-rho}, we will show that, for $a$ even, the matrix ${\overline H}$ has rank  at most $4a - 6$.
\begin{lemma}\label{lem:rankH} Let $a\geq 3$ be odd, then 
the rank of the matrix $H$  is exactly $4a - 3$.
\end{lemma}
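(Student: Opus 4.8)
The plan is to compute the $\mathbb{Z}_2$-rank of $H$ by determining the dimension of its left null space $N$, i.e. the space of $\mathbb{Z}_2$-linear dependencies among the $4a$ rows, since $\mbox{rank}(H)=4a-\dim N$. I would first get the upper bound $\mbox{rank}(H)\le 4a-3$ by exhibiting three explicit dependencies. Writing $V_{-1},V_0,V_1,V_2$ for the four row-blocks, a direct check shows that the sum over $\mathbb{Z}_2$ of all rows in each block equals the all-ones vector $\mathbf 1$ of length $a^2$: in $V_{-1}$ because the rows of $[R_0|\cdots|R_{a-1}]$ partition the columns, and in $V_0,V_1,V_2$ because every block-column is a single permutation matrix, whose rows sum to $\mathbf 1$. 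Hence the three coefficient vectors $\mathbf 1_{V_{-1}}+\mathbf 1_{V_0}$, $\mathbf 1_{V_0}+\mathbf 1_{V_1}$ and $\mathbf 1_{V_1}+\mathbf 1_{V_2}$ lie in $N$ and are clearly independent, so $\dim N\ge 3$.

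The substance is the reverse inequality $\dim N\le 3$. I would encode an arbitrary element of $N$ by four functions $a,b,c,d\colon\mathbb{Z}_a\to\mathbb{Z}_2$ recording the coefficients on the rows of $V_{-1},V_0,V_1,V_2$ respectively. By Equation \eqref{eq:xyzt}, the column indexed $a\mu+q$ meets exactly the rows $\mu\in V_{-1}$, $q\in V_0$, $q+\mu\in V_1$ (using $D(\mu,1)=\mu$) and $q+D(\mu,2)\in V_2$, so the dependency condition reads, with all indices taken modulo $a$,
\[
a_\mu+b_q+c_{q+\mu}+d_{q+D(\mu,2)}=0\qquad\mbox{for all }\mu,q\in\mathbb{Z}_a. \tag{$\ast$}
\]

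The crux is to deduce from $(\ast)$ that each of $a,b,c,d$ is constant. Setting $\mu=0$ lets me solve for $d$ (up to an index shift by $D(0,2)$) in terms of $b$ and $c$, eliminating $d$. Substituting back and summing the resulting relation over $q\in\mathbb{Z}_a$, the $b$- and $c$-terms cancel in pairs (each is a translate of the full sum), while the constant term survives precisely because $a$ is odd, namely $a\cdot(a_\mu+a_0)=a_\mu+a_0$ in $\mathbb{Z}_2$; this forces $a_\mu=a_0$ for all $\mu$, so $a$ is constant. With the $a$-terms now gone, I sum the same relation instead over $\mu$ for fixed $q$: here the key input is that column $2$ of $D$ is a permutation of $\mathbb{Z}_a$, so $\sum_\mu b_{q+D(\mu,2)}$ becomes the full sum and three of the four terms collapse, yielding $b_q=\sum_w b_w$, i.e. $b$ constant. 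The relation then reduces to $c_{q+\mu}=c_{q+D(\mu,2)}$, so $c$ is invariant under translation by $D(\mu,2)-D(\mu,1)$ for every $\mu$; since these differences range over all of $\mathbb{Z}_a$ by the defining property of a DM$(3;a)$, in particular one of them equals $1$, and invariance under a unit shift makes $c$ constant. Finally $d$ is constant by the $\mu=0$ relation. The four constants satisfy $\alpha+\beta+\gamma+\delta=0$, and conversely every block-constant vector with this property satisfies $(\ast)$; hence $N$ is exactly this $3$-dimensional space and $\mbox{rank}(H)=4a-3$.

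I expect the main obstacle to be this lower bound $\dim N\le 3$, and within it the two averaging steps, where one must choose the correct summation variable at each stage and verify the cancellations. These are exactly the places where the hypotheses genuinely enter: the oddness of $a$ (so that summing a constant over $\mathbb{Z}_a$ returns it rather than annihilating it) and the combinatorial structure of the DM$(3;a)$ (column $2$ a permutation, and the differences between columns $1$ and $2$ covering $\mathbb{Z}_a$). I would therefore isolate these two facts explicitly, since they pinpoint why the argument works only for odd $a$ and flag what must be adapted for the even case in Lemma \ref{lem:rankH-rho}.
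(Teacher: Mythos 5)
Your proof is correct, and for the substantive half --- the lower bound $\mathrm{rank}(H)\geq 4a-3$ --- it takes a genuinely different route from the paper's. The upper bound is the same in both: the rows of each block $V_i$ sum to the all-ones vector, giving three independent dependencies. For the lower bound, the paper fixes the explicit candidate basis $V=V_{-1}\cup V_0\cup V_1\cup V_2\setminus\{2a-1,3a-1,4a-1\}$, supposes $L\subseteq V$ indexes a dependent set, and counts the nonzero entries of the submatrix of $H$ on the rows of $L$ and the $a$ columns supported by a fixed row in two ways: this count must be even, yet it equals $a+|L\setminus V_i|$ when the fixed row lies in $L\cap V_i$ and $|L\setminus V_i|$ when it lies in $V_i\setminus L$, so oddness of $a$ forces each block to lie entirely inside or entirely outside $L$, and the one surviving possibility $L=V_{-1}$ is excluded by column sums. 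You instead characterize the entire left null space: encoding a dependency by four coefficient functions on $\mathbb{Z}_a$, eliminating the fourth via the $\mu=0$ equations, and averaging the resulting relation over $q$ and then over $\mu$, you conclude that every dependency is constant on each block with the four constants summing to zero, so the left null space is exactly $3$-dimensional. Both proofs lean on the same two hypotheses but package them differently: oddness of $a$ enters for you as $a\cdot x=x$ in $\mathbb{Z}_2$, and for the paper as the parity of $a+|L\setminus V_i|$; the DM structure enters for you as ``column $2$ is a permutation and the differences $D(\mu,2)-D(\mu,1)$ cover $\mathbb{Z}_a$, hence some shift is a unit,'' and for the paper through the exactly-one-intersection property from Lemma \ref{lm:RPIproperties}. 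What the paper's route buys is an explicit set of three rows whose deletion leaves a row basis; what yours buys is a complete description of all row dependencies (precisely the block-constant vectors with zero coefficient sum), which is strictly more information and makes transparent where the argument must change for even $a$ in Lemma \ref{lem:rankH-rho}. One small correction: after eliminating the fourth function, the shifts under which your third function is invariant are $D(\mu,2)-D(\mu,1)-D(0,2)$, not $D(\mu,2)-D(\mu,1)$, since standard form forces $D(0,1)=0$ but not $D(0,2)=0$; as translating a set that covers $\mathbb{Z}_a$ by the constant $D(0,2)$ still covers $\mathbb{Z}_a$, your conclusion is unaffected.
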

\begin{proof}
We will establish this result by showing that there is a set of $3$ rows in the row space of $H$ such that each row may be written as linear combinations of the remaining $4a-3$ rows. Further we will establish that there exists a set of  $4a-3$ rows that are linearly independent.

Lemma \ref{lem:w_c=4}  implies that, for each  column $b$, there exists $x\in V_{-1}$, $y\in V_{0}$,  $z\in V_{1}$ and $t\in V_{2}$ with $1=H(x,b)=H(y,b) =H(z,b)=H(t,b)$, where $V_i=\{(i+1)a,\dots,(i+2)a-1\}$, $i=-1,0,1,2$.
Thus when the row space of $H$ is restricted to the rows given by any two of these sets, $V_{i,j}=V_i\cup V_j$ where $-1\leq i<j\leq 2$, the bitwise sum of the corresponding rows is $0$ modulo $2$, implying the rows corresponding to $V_{i,j}$, $-1\leq i<j\leq 2$, are linearly dependent.
Thus at least 1 row from 3 distinct $V_i$'s  needs to be removed to obtain a subset of linearly independent rows of $H$, or equivalently, the size of any linearly independent subset is at most $4a-3$. Without loss of generality  we will eliminate the rows $2a-1$, $3a-1$ and $4a-1$.

  Now we claim that the set of rows corresponding to $V=V_{-1}\cup V_0 \cup V_1 \cup V_2 \setminus \{2a-1,3a-1,4a-1\}$ is a linearly independent set.

  Assume that this is not the case and that there exists $L\subseteq V$ such that the corresponding rows of $H$ give  a linearly dependent set. Note that, this implies when $H$ is restricted to the rows of $L$ then the sum of the entries in columns of $H$ is $0\ mod\ 2$

  For any $i\in \{-1,0,1,2\}$ and any $r\in V_i$, the row sum of row $r$ is $a$, so let  $\{b_{r_1},\dots, b_{r_a}\}$ denote the set of $a$ columns where $H(r,b_{r_j})=1$ for $j=1,\dots, a$. The proof of Lemma \ref{lm:RPIproperties} implies that for distinct  $r,r^\prime\in V_i$ if $H(r,b_{r_j})=1$, then $H(r^\prime,b_{r_j})\neq 1$. Further, for distinct $i,i^\prime$ and $r\in V_i$ and $r^\prime \in V_{i^\prime}$, there exists a unique $b_{r_j}\in \{b_{r_1},\dots, b_{r_a}\}$ such that $H(r,b_{r_j})=1$ and $H(r^\prime,b_{r_j})=1$. 

Next we proceed by  assuming $r\in L\cap V_i$, for some $i\in \{-1,0,1,2\}$ and let $H(L,\{b_{r_1},\dots, b_{r_a}\})$ denote the restriction of the matrix $H$ to rows in $L$ and columns $b_{r_1}, b_{r_2}, \dots, b_{r_a}$.
    Since $L$ corresponds to  a linearly dependent set, summing over all rows of $H(L,\{b_{r_1},\dots, b_{r_a}\}) \mod 2$ gives the $0$ vector of length $a$. Hence the number of non-zero entries in $H(L,\{b_{r_1},\dots, b_{r_a}\})$  is even, say $2\ell$ for some $\ell\in \mathbb{Z}$. But now the argument above implies that $2\ell=a+\mid L\setminus V_i\mid$ (the number of rows in $L$ but not in  $V_i$). Now as $a$ is assumed to be odd, we have $\mid L\setminus V_i\mid$ is also odd.

  Next assume $L\cap V_i\neq \emptyset$ and $r^\prime \in V_i$ but $r^\prime\notin L$. Again let $\{b_{r^\prime 1},\dots, b_{r^\prime a}\}$ denote the set of columns such that $H(r^\prime,b_{r^\prime _ j})=1$, $j=1,\dots, a$. As above let $H(L,\{b_{r^\prime _1},\dots, b_{r^\prime _a}\})$ be the restriction of the matrix $H$ to rows in $L$ and columns in $\{b_{r^\prime _1},\dots, b_{r^\prime _a}\}$.  Again the number of entries in $H(L,\{b_{r^\prime _1},\dots, b_{r^\prime _a}\})$ is even, say $2\ell^\prime,$ for some $\ell^\prime\in \mathbb{Z}$. We have $2\ell^\prime=\mid L\setminus V_i\mid$, implying  $\mid L\setminus V_i\mid$ is even. Thus we have a contradiction, and no such $r$ exists.

  So if $L\cap V_i\neq \emptyset$ then $V_i\subseteq L\subseteq V=V_{-1}\cup V_0 \cup V_1 \cup V_2 \setminus \{2a-1,3a-1,4a-1\}$ implying $L=V_{-1}$. But  the sum of each column restricted to $V_{-1}$ is 1, again giving a contradiction.

Hence $V$ is linearly independent and the rank of $H$ is at least $4a-3$ for odd $a$, implying the rank of $H$ is  \emph{exactly} $4a-3$ for odd $a$.
\end{proof}
\begin{lemma}\label{lem:rankH-rho} Let $a\geq 4$ be even,  
then
the rank of the matrix ${\overline H}$  is at most $4a-6$.
\end{lemma}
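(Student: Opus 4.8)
The plan is to mirror the structure of the proof of Lemma \ref{lem:rankH} ($a$ odd), but to exhibit \emph{more} linear dependencies among the rows of ${\overline H}$, so as to push the rank down from $4a-3$ to at most $4a-6$. Recall that ${\overline H}$ is obtained from the even-order construction by deleting one column-block $R_\rho$ (equivalently, one block of $a$ columns) together with the all-zero row $r_0$; so ${\overline H}$ has $4a-1$ rows, again partitioned into the four sets $V_{-1},V_0,V_1,V_2$, except that $V_{-1}$ now has only $a-1$ rows (the row $r_0$ having been removed). The key difference driving the rank drop will be the defining property of a DCA$(3;a)$: for each pair of distinct non-zero columns the repeated difference is exactly $n=a/2$, so each off-diagonal pair of the three ``layers'' realizes one difference twice rather than each difference exactly once.

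First I would recover the three ``obvious'' dependencies exactly as in the odd case: for each pair $V_i,V_j$ with $-1\le i<j\le 2$, summing all rows of ${\overline H}$ in $V_i\cup V_j$ gives $\mathbf{0}$ over $\mathbb{Z}_2$, because every column meets each layer in exactly one $1$ (using Lemma \ref{lem:w_c=4}, suitably relabelled). These yield at least three independent relations among the row space, accounting for the first ``$-3$'' and matching the bound $4a-3$ one would get in the odd case. The task is then to produce \emph{three further} independent dependencies that are special to the even/DCA setting.

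The hard part, and the crux of the argument, is to locate these three extra relations and to show they are linearly independent of the first three. The plan is to exploit the doubled differences: because in a DCA$(3;a)$ each of the three pairs of layers $(V_0,V_1)$, $(V_0,V_2)$, $(V_1,V_2)$ repeats the difference $n$ exactly once, there exist, for each such pair, two rows sharing a common $1$-pattern that can be combined into a short dependency not already spanned by the three global relations. Concretely, I would identify for each pair of circulant layers the two rows $r_0,r_1$ with $D(r_\bullet,j)-D(r_\bullet,j')=n$, and build from the resulting coincident columns a sum of a controlled subset of rows of ${\overline H}$ that vanishes modulo $2$; doing this for the three layer-pairs produces three additional relations. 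I would then argue independence by restricting each candidate relation to a set of columns on which the three global relations act trivially, so that the new relations cannot be expressed in terms of them. This gives $\mathrm{rank}({\overline H})\le (4a-1)-3-3 = 4a-7$ unless one of the six relations is redundant; the careful bookkeeping of which coincidences overlap (and whether the removal of $R_\rho$ and row $r_0$ destroys one relation) is exactly where the bound lands at $4a-6$ rather than $4a-7$, so I expect the main obstacle to be tracking these overlaps precisely and confirming that exactly \emph{one} of the DCA-induced relations coincides with a combination already counted, leaving a net of three extra independent dependencies and hence $\mathrm{rank}({\overline H})\le 4a-6$.
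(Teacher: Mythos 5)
Your starting point (the three relations obtained by summing the rows of $V_i\cup V_j$ over $\mathbb{Z}_2$, giving $\mathrm{rank}({\overline H})\leq (4a-1)-3=4a-4$) agrees with the paper, and your bookkeeping is aimed at the right target: you need exactly two further independent relations to reach $4a-6$. But the mechanism you propose for producing those extra relations fails, for two concrete reasons. First, in a DCA$(3;a)$ the repeated-difference property holds only for the unique pair of distinct \emph{non-zero} columns of $D$, hence only for the layer pair $(V_1,V_2)$; for the pairs $(V_0,V_1)$ and $(V_0,V_2)$ the relevant differences are $D(i,1)-D(i,0)=D(i,1)$ and $D(i,2)-D(i,0)=D(i,2)$, and since columns $1$ and $2$ of $D$ are permutations of $\{0,\dots,a-1\}$ these cover every residue exactly once. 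So your premise that ``each of the three pairs of layers realizes one difference twice'' is false. Second, and decisively, Construction 2 deletes precisely the column block $v=r_0$ with $D(r_0,2)-D(r_0,1)=n$, and after that deletion the differences $D(v,2)-D(v,1)$, $v\neq r_0$, cover $\{1,\dots,a-1\}$ exactly once each --- this is exactly what makes ${\overline H}$ satisfy the RC-constraint (Lemma \ref{lem:H-rho}). The coincident columns you intend to build dependencies from therefore do not exist in ${\overline H}$ at all. (Even if they did, two rows agreeing in some columns is not a $\mathbb{Z}_2$ relation; you would still need a set of rows meeting \emph{every} column an even number of times, and you give no recipe for producing one.)

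The idea actually needed --- and what is genuinely special about even $a$ --- is parity, not the repeated difference. Since $a$ is even, reduction modulo $a$ preserves the parity of row indices inside each circulant block. The paper uses this to exhibit explicit dependent sets: for instance $L_3$, consisting of all even-indexed rows of $V_0$ and of $V_1$ together with the rows $x\in V_{-1}$ for which $D(x,1)$ is odd. For any column $b$, with $C_b=\{x,\ a+q,\ 2a+(q+D(x,1)\bmod a),\ 3a+(q+D(x,2)\bmod a)\}$, one checks that $L_3$ meets $C_b$ in $0$ or $2$ rows (if $D(x,1)$ is even the two middle entries have equal parity; if $D(x,1)$ is odd they have opposite parity and $x\in L_3$), so the rows of $L_3$ sum to zero over $\mathbb{Z}_2$; the sets $L_4$ (via the parity of $D(x,2)$) and $L_5$ (via the parity of $D(x,1)-D(x,2)$) work the same way, with the relation from $L_5$ being the sum of the other two. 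The relations from $L_3$ and $L_4$ supply the two additional independent dependencies, allowing (after reinstating the zero row $r_0$, which does not change the rank) the elimination of the six rows $r_0,\,2a-2,\,2a-1,\,3a-2,\,3a-1,\,4a-1$ and yielding $\mathrm{rank}({\overline H})\leq 4a-6$. Without some such parity argument, your proposal cannot close the gap from $4a-4$ to $4a-6$.
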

\begin{proof}
In this proof we establish the bound on the rank by showing that there is a set of $5$ rows in the row space of ${\overline H}$ (i.e. the set of vectors corresponding to rows of ${\overline H}$) that can be written as linear combinations of the remaining rows. To aid understanding we will prove the result for a $(4a)\times (a^2-a)$  matrix ${\overline H}_{r_0}$ that agrees with ${\overline H}$ in rows $0$ to $r_0-1$, with the next row $r_0$ having all entries equal to $0$, and then followed by rows $r_0 + 1$ to $4a-2$ of ${\overline H}$. Thus we have reinstated the row of all zeros to ${\overline H}$. This will allow us to simplify the arguments, while the introduction of a zero row will not change the calculation of the rank of ${\overline H}$.

We proceed by using Construction 2 to deduce the following properties of ${\overline H}_{r_0}$.

Firstly, note that as in the case for odd $a$, when the row space of ${\overline H}_{r_0}$ is restricted to rows given by any two of the sets, $V_{i,j}=V_i\cup V_j$ where $-1\leq i<j\leq 2$, the componentwise sum of the corresponding vectors is $0$ modulo $2$, implying the vectors corresponding to $V_{i,j}$, $-1\leq i<j\leq 2$, are linearly dependent.

To obtain a subset of linearly independent vectors of the row space of ${\overline H}_{r_0}$ at least $4$ rows need to be removed, or equivalently, the size of any linearly independent subset is at most $4a-4$. Without loss of generality  we will eliminate the vectors corresponding to  rows $r_0$, $2a-1$, $3a-1$ and $4a-1$.

Now consider the following sets $L_3,\ L_4,\ L_5$ of rows of ${\overline H}_{r_0}$. For $0\leq i\leq \frac{a-1}{2}$
\begin{eqnarray*}
L_3&=&\{x, a+2i,2a+2i\ \mid\  D(x,1)\equiv1\mod 2\}\\
L_4&=&\{x,a+2i,3a+2i\ \mid\ D(x,2)\equiv1\mod 2 \}\\
L_5&=&\{x,2a+2i,3a+2i\ \mid\ D(x,1)\not\equiv D(x,2)\mod 2\}.
\end{eqnarray*}
We claim that the vectors corresponding to each of these sets are linearly dependent.

To see that each of $L_3$ and $L_4$  corresponds to a linearly dependent set of vectors observe that for any column $b$ and $C_b=\{x,y,z,t\}$, Lemma \ref{lem:w_c=4} implies if $D(x,1)\equiv 0\mod 2$ then $y\equiv z\mod 2$ and if $D(x,1)\equiv 1\mod 2$ then $y\not\equiv z\mod 2$. Similarly if $D(x,2)\equiv 0\mod 2$ then $y\equiv t\mod 2$ and if $D(x,2)\equiv 1\mod 2$ then $y\not\equiv t\mod 2$. Furthermore, to see that $L_5$  corresponds to a linearly dependent set of vectors observe that, if $D(x,1)\equiv D(x,2)\mod 2$ then $z\equiv t\mod 2$ and if $D(x,1)\not\equiv D(x,2)\mod 2$ then $z\not\equiv t\mod 2$.

 Note that there are no rows that are common to all three sets and  $L_3,L_4,L_5\subseteq V=V_{-1}\cup V_0 \cup V_1 \cup V_2 \setminus \{r_0,2a-1,3a-1,4a-1\}$. So as $L_3,L_4$ and $L_5$ are linearly dependent sets of rows then to obtain a subset of linearly independent vectors of the row space of ${\overline H}_{r_0}$,  without loss of generality we  can further eliminate the vectors corresponding to rows $2a-2$ and $3a-2$.

Hence we can eliminate the rows $r_0,2a-2,2a-1,3a-2,3a-1,4a-1$ without changing the rank of ${\overline H}_{r_0}$. So the rank of ${\overline H}$ is at most $4a-6$ for even $a$.
\end{proof}
We want to make a note here that our simulations show that the rank of $\overline{H}$ is exactly  $4a-6$ for all even $a\leq 200.$
\begin{lemma}
For $a$ odd, the $(a^2,4,a)$-regular binary LDPC code has rate $\displaystyle 1-\frac{4a-3}{a^2}$ and, for $a$ even,  the $(a^2-a,4,\{a-1,a\})$-near regular binary LDPC code has rate at least $\displaystyle 1-\frac{4a-6}{a^2-a}$.
\end{lemma}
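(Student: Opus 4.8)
The plan is to obtain both rate formulae as an immediate consequence of the two preceding rank computations together with the rank--nullity identity, so essentially no new work is required beyond bookkeeping. First I would recall from the Background section that, writing a parity-check matrix in the reduced form $[P^T\mid I_{m-\kappa}]$, the number of rows of the identity block $m-\kappa$ is precisely the number of linearly independent rows, i.e. the rank of the parity-check matrix over $\mathbb{Z}_2$. Hence, by the identity $\mathrm{rank}(A)+\mathrm{nullity}(A)=m$ for a matrix $A$ with $m$ columns (already cited in Section~\ref{properties}), the dimension $\kappa$ of the code equals $m-\mathrm{rank}$, and the rate $\kappa/m$ equals $1-\mathrm{rank}/m$.

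For $a$ odd, the code has length $m=a^2$, and Lemma~\ref{lem:rankH} establishes that the rank of $H$ is exactly $4a-3$. Substituting into $1-\mathrm{rank}/m$ gives a rate of $1-\frac{4a-3}{a^2}$, which is exactly Identity~\eqref{eq:rankH}. This case is an equality because the rank is known exactly.

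For $a$ even, the code has length $m=a^2-a$, and Lemma~\ref{lem:rankH-rho} establishes only the upper bound $\mathrm{rank}(\overline{H})\le 4a-6$. The one point to handle with care is the direction of the resulting inequality: since $1-\mathrm{rank}/m$ is a decreasing function of the rank, an upper bound on the rank produces a \emph{lower} bound on the rate. Thus $\mathrm{rate}=1-\frac{\mathrm{rank}(\overline{H})}{a^2-a}\ge 1-\frac{4a-6}{a^2-a}$, which is Identity~\eqref{eq:rankH-rho}.

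There is no substantive obstacle in this lemma; all of the analytic content lives in the rank computations of Lemmata~\ref{lem:rankH} and~\ref{lem:rankH-rho}, and the present statement merely repackages those ranks as rates via rank--nullity and the definition of the code rate. The only thing worth stating explicitly is that, because the even case bounds the rank from above rather than pinning it down, the corresponding conclusion is a one-sided bound on the rate.
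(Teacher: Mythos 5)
Your proposal is correct and follows exactly the same route as the paper's own proof: apply rank--nullity together with Lemmata \ref{lem:rankH} and \ref{lem:rankH-rho}, obtaining an exact rate for odd $a$ and a lower bound for even $a$ since the rank is only bounded above in that case. Your explicit remark that an upper bound on the rank yields a lower bound on the rate is the one subtlety, and you handle it correctly.
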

\begin{proof}
Recall that the $\mbox{nullity}(H) = m-\mbox{rank}(H)$, and so, respectively,  Lemmata \ref{lem:rankH} and \ref{lem:rankH-rho} imply that for $a$ odd with $m = a^2$, and for $a$ even with  $m=a^2-a$,
 rate of the code  is  exactly $\displaystyle \frac{a^2-4a+3}{a^2}=1-\frac{4a-3}{a^2}$ and at least $\displaystyle \frac{a^2-5a+6}{a^2-a}=1-\frac{4a-6}{a^2-a}$ respectively.
\end{proof}
\subsection{Stopping and Minimum Distance of the LDPC codes from DM$(3;a)$ and DCA$(3;a)$}

To investigate possible stopping sets for our codes we exploit the facts that  the parity-check matrix $H$ has column weight $4$, satisfies the RC-constraint and that for each column $b_i$ we may define sets $C_{b_i}=\{x_i,y_i,z_i,t_i\}$ as in Equation \eqref{eq:xyzt} where $x_i\in X$, $y_i\in Y$, $z_i\in Z$ and $t_i\in T$ for disjoint sets $X,Y,Z,T$. Then an exhaustive computer search can be used to show that, up to isomorphism there are only two possible stopping sets $S_1$ and $S_2$ of size $6$ that can occur as subsets of columns of parity-check matrices with these properties, for arbitrary columns $b_1,\ b_2,\ b_3,\ b_4,\ b_5,\ b_6$. Note that, one can also prove analytically that there exists only 2 non-isomorphic cases by considering having only 2 different $x_i$'s (case $S_1$) and 3 different $x_i$'s (Case $S_2$). Then the $y_i$'s, $z_i$'s and $t_i$'s are uniquely determined up-to-isomorphism.$S_1$ and $S_2$ take the following forms.
\vspace{0.1cm}

\begin{eqnarray*}
\begin{array}{ll}
S_1=\{&C_{b_1}=\{x_1,y_1,z_1,t_1\},\\
&C_{b_2}=\{x_1,y_2,z_2,t_2\},\\
&C_{b_3}=\{x_1,y_3,z_3,t_3\},\\
&C_{b_4}=\{x_2,y_1,z_2,t_3\},\\
&C_{b_5}=\{x_2,y_2,z_3,t_1\},\\
&C_{b_6}=\{x_2,y_3,z_1,t_2\}\},
\end{array}&&
\begin{array}{ll}
S_2=\{&C_{b_1}=\{x_1,y_1,z_1,t_1\},\\
&C_{b_2}=\{x_1,y_2,z_2,t_2\},\\
&C_{b_3}=\{x_2,y_1,z_2,t_3\},\\
&C_{b_4}=\{x_2,y_3,z_3,t_1\},\\
&C_{b_5}=\{x_3,y_2,z_3,t_3\},\\
&C_{b_6}=\{x_3,y_3,z_1,t_2\}\}.
\end{array}
\end{eqnarray*}
\begin{lemma}\label{lem:sd8}
Suppose $3\nmid a$, then the constructed  $(a^2,4,a)$-regular  and $(a^2-a,4,\{a-1,a\})$-near regular binary LDPC codes have stopping distance at least $8$.
\end{lemma}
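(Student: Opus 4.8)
The plan is to translate the stopping-set condition into the coordinate language already set up in Lemmata \ref{lem:w_c=4} and \ref{lm:RPIproperties}. By Lemma \ref{lem:w_c=4} every column $b=va+q$ is determined by the pair $(v,q)$ (with $v\neq r_0$ in the even case), and its four incident rows lie one in each of $V_{-1},V_0,V_1,V_2$, corresponding to the four ``coordinates'' $v$, $q$, $v+q$, and $q+D(v,2)$ taken mod $a$. A set $S$ of columns is a stopping set exactly when, in each of these four projections, no value occurs exactly once; that is, every nonempty fibre has size at least $2$. The RC-constraint (Lemmata \ref{lem:H} and \ref{lem:H-rho}) says two columns agree in at most one coordinate, so the number of pairs of columns of $S$ that agree somewhere equals $\sum_i P_i$, where $P_i=\sum_{\text{fibres }F}\binom{|F|}{2}$ in coordinate $i$, and this is at most $\binom{|S|}{2}$. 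First I would run through the partitions of $\ell=|S|$ into parts $\geq 2$ and minimise $\sum_i P_i$: for $\ell\leq 5$ even the cheapest fibre types force $\sum_i P_i>\binom{\ell}{2}$, so no stopping set of size $\leq 5$ exists. This leaves sizes $6$ and $7$.

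For $\ell=6$ I would invoke the stated classification that $S$ is isomorphic to $S_1$ or $S_2$ and write each set of coincidences as linear congruences mod $a$. For $S_1$ (two $v$-values $V\neq V'$, each thrice), the three coincidences in the $z$-coordinate $v+q$ force the three $q$-values into an arithmetic progression with common difference $d=V'-V$ that closes after three steps, giving $3d\equiv 0\pmod a$; since $3\nmid a$ this forces $d\equiv 0$, i.e. $V=V'$, a contradiction. For $S_2$ (three $v$-values $V_1,V_2,V_3$, each twice), eliminating the $q$-values from the $z$- and the $t$-coincidences yields $2(V_3-V_1)\equiv 0$ and $2\bigl(D(V_3,2)-D(V_1,2)\bigr)\equiv 0\pmod a$. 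For odd $a$ the first relation already gives $V_1=V_3$; for even $a$, since the $V_i$ are distinct and $D(\cdot,2)$ is a permutation, both differences must equal $a/2$, whence $D(V_3,2)-V_3=D(V_1,2)-V_1$, contradicting injectivity of $v\mapsto D(v,2)-v$ on the admissible $v$ (which, after deletion of row $r_0$ in Construction 2, is precisely the single-repeated-difference property of the DCA). Thus no size-$6$ stopping set survives when $3\nmid a$.

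For $\ell=7$ the counting bound forces every coordinate to have fibre type $\{3,2,2\}$ (the only partition of $7$ four of whose copies fit under $\binom{7}{2}=21$), so $v$ and $q$ each take exactly three values and the seven columns occupy seven of the nine cells of a $3\times3$ grid $\{V_1,V_2,V_3\}\times\{Q_1,Q_2,Q_3\}$. The remaining constraints are that $v+q$ and $q+D(v,2)$ also collapse to three values each. Imposing that the row $\{V_i+Q_j\}$ containing three filled cells already uses up the three admissible $z$-values pins the other cells down to linear relations, forcing the row- and column-value sets to be arithmetic with a common spacing $p$; the analogous analysis of $q+D(v,2)$ then forces either $3p\equiv 0\pmod a$ or a repeated value of $v\mapsto D(v,2)-v$. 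As in the size-$6$ case, $3p\equiv 0$ with $3\nmid a$ collapses the grid ($p\equiv0$), and a repeated value of $v\mapsto D(v,2)-v$ contradicts its injectivity; either way the configuration is impossible.

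I expect the size-$7$ grid analysis to be the main obstacle, since it is the one case not handed to us by the precomputed classification and it uses the difference-matrix/difference-covering-array structure in full: one must carefully enumerate the ways $v+q$ and $q+D(v,2)$ can each degenerate to three values on the seven occupied cells and verify that \emph{every} branch terminates in one of the two contradictions above. The $S_2$ argument for even $a$ is the secondary subtlety, as it is exactly where the deletion of $r_0$ and the ``repeated difference equals $n$'' property of the DCA must be used.
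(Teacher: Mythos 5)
Your fibre--projection reformulation of the stopping-set condition, the counting that eliminates sizes $\le 5$, and your $S_1$ argument (in substance the paper's: the $z$-coincidences close up into $3d\equiv 0\pmod a$, so $3\nmid a$ forces $d=0$) are all sound. The first genuine gap is in your $S_2$ argument for even $a$: the step ``both differences must equal $a/2$, whence $D(V_3,2)-V_3=D(V_1,2)-V_1$'' fails because the two congruences you cite do not involve the same pair of indices. Writing $\beta_i=D(x_i,2)$ and labelling $S_2$ as in the paper, the $z$-coincidences are $x_1+y_1=x_3+y_3$, $x_1+y_2=x_2+y_1$, $x_2+y_3=x_3+y_2$, and eliminating the $y$'s yields exactly $2(x_1-x_3)\equiv 0\pmod a$; the $t$-coincidences are $\beta_1+y_1=\beta_2+y_3$, $\beta_1+y_2=\beta_3+y_3$, $\beta_2+y_1=\beta_3+y_2$, and eliminating the $y$'s yields exactly $2(\beta_2-\beta_3)\equiv 0\pmod a$. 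The pairs $(1,3)$ and $(2,3)$ are forced by the structure of $S_2$, not by a choice of labels, so the two $a/2$-differences cannot be subtracted to produce a repeated value of $v\mapsto D(v,2)-v$. The paper avoids this entirely by eliminating in the other direction: the $z$-coincidences give $2y_1\equiv 2y_3$, the $t$-coincidences give $2y_1\equiv 2y_2$, and three \emph{distinct} residues cannot pairwise satisfy $2(y_i-y_j)\equiv 0\pmod a$; this kills $S_2$ for every $a$, odd or even, with no DCA-specific property at all. Your framework admits the same repair (use the two $y$-relations instead of the $x$/$\beta$-relations), but as written the deduction is wrong.

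The second gap is size $7$. Your counting correctly forces fibre type $\{3,2,2\}$ in all four projections and the $3\times3$ grid picture is right, but the asserted dichotomy is not established, and the claim that the $z$-degeneration forces ``arithmetic row- and column-value sets with a common spacing'' misses branches in the even case. For example, when $4\mid a$ the $z$-constraint is also satisfied by $V$-values $\{V_1,\,V_1+e,\,V_1+a/2\}$ and $Q$-values $\{Q_1,\,Q_1+a/2,\,Q_1+e\}$ with $2e\equiv a/2\pmod a$ (the full row and full column then matched \emph{crosswise}, giving $z$-multiplicities $2,3,2$); this branch must be pushed separately through the $t$-constraint before any contradiction appears, and it terminates in the repeated-difference contradiction rather than in $3p\equiv 0$. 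Since you explicitly flag the size-7 enumeration as unfinished, the proposal is incomplete exactly where it claims to go beyond the precomputed classification. To be fair, the paper's own proof is silent here too: it rules out only the two size-6 configurations and delegates sizes $\le 5$ and $7$ to the stated exhaustive computer search, so your explicit treatment of those sizes is a worthwhile addition --- but it has to be carried through every branch to count as a proof of the lemma.
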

\begin{proof}
We need to demonstrate that $S_1$ and $S_2$ given above do not occur in the corresponding LDPC code.

First assume that there exists columns $b_1,\ b_2,\ b_3,\ b_4,\ b_5,\ b_6$ of $H$ such that $C_{b_1}$, $C_{b_2}$, $C_{b_3}$, $C_{b_4}$, $C_{b_5}$, $C_{b_6}$ take the form given in $S_1$.
Then by Lemma \ref{lem:w_c=4} $D(x_1,1)+y_i\equiv z_i\mod a$ for $1\leq i\leq 3$, $D(x_2,1)+y_1\equiv z_2\mod a$, $D(x_2,1)+y_2\equiv z_3\mod a$ and $D(x_2,1)+y_3\equiv z_1\mod a$. Summing these equivalences implies  $3D(x_1,1)\equiv 3D(x_2,1)\mod a$. If $3\nmid a$ then $D(x_1,1)=D(x_2,1)$, leading to a contradiction.

Next assume that there exists columns $b_1,\ b_2,\ b_3,\ b_4,\ b_5,\ b_6$ of $H$ such that $C_{b_1},C_{b_2},C_{b_3},C_{b_4},C_{b_5},C_{b_6}$ take the form given in $S_2$. Then by Lemma \ref{lem:w_c=4}
 $D(x_1,1)+y_1\equiv D(x_3,1)+y_3 \mod a$, $D(x_2,1)+y_1\equiv D(x_1,1)+y_2\mod a$, $D(x_3,1)+y_2\equiv D(x_2,1)+y_3 \mod a$. Summing these equivalences we obtain $2y_1\equiv 2y_3 \mod a$.
 Similarly we have $t_1-y_1\equiv t_2-y_2\ \mod a$, $t_3-y_1\equiv t_1-y_3 \mod a$ and $t_2-y_3\equiv t_3-y_2 \mod a$. Summing these equivalences we obtain $2y_1\equiv 2y_2 \mod a$. So we have $2y_1\equiv 2y_2\equiv 2y_3 \mod a$. This is a contradiction since it implies $y_1=y_2$, $y_1=y_3$ or $y_2=y_3$.
 \end{proof}
 \begin{corollary}\label{MinDis}
The constructed $(a^2,4,a)$-regular and the $(a^2-a,4,\{a-1,a\})$-near regular binary LDPC codes have minimum distance at least $8$.
\end{corollary}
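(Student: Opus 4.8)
The plan is to obtain this corollary immediately from the stopping-distance bound of Lemma~\ref{lem:sd8}, by invoking the general fact (noted in Section~\ref{background}) that the stopping distance of a code is a lower bound for its minimum distance. Concretely, I would show that the support of every nonzero codeword is a nonempty stopping set, so that the minimum weight of a nonzero codeword cannot be smaller than the size of the smallest stopping set.

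First I would recall the matrix description of a stopping set: a set $S$ of columns is a stopping set exactly when every row of the induced submatrix $H(\,\cdot\,,S)$ has sum $0$ or at least $2$. Now let $\mathbf{v}$ be any nonzero codeword and let $S=\mathrm{supp}(\mathbf{v})$ be its support. Because $\mathbf{v}$ lies in the null space of the parity-check matrix over $\mathbb{Z}_2$, each row $i$ satisfies $\sum_{j\in S}H(i,j)\equiv 0 \pmod 2$; hence every row of $H(\,\cdot\,,S)$ contains an even number of ones. An even number is either $0$ or at least $2$, which is precisely the defining condition of a stopping set, so $S$ is a nonempty stopping set whose size equals the weight of $\mathbf{v}$.

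Consequently the weight of every nonzero codeword is at least the stopping distance $s^{*}$, and therefore the minimum distance is at least $s^{*}$. Applying Lemma~\ref{lem:sd8}, which gives $s^{*}\geq 8$ for both the $(a^2,4,a)$-regular code and the $(a^2-a,4,\{a-1,a\})$-near regular code (under the hypothesis $3\nmid a$ inherited from that lemma), yields minimum distance at least $8$. I do not expect any genuine obstacle here: the argument is a short chain of implications once the inclusion ``codeword supports are stopping sets'' is established. The only point demanding care is the direction of the inequality---since codeword supports form a \emph{subset} of all stopping sets, the minimum over codeword supports (the minimum distance) is bounded \emph{below}, not above, by the minimum over all stopping sets (the stopping distance).
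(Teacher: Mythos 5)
Your reduction of minimum distance to stopping distance is correct as far as it goes, and it is exactly the first half of the paper's own proof: the support of a nonzero codeword meets every check row in an even number of columns, hence is a nonempty stopping set, so the minimum distance is bounded below by the stopping distance $s^{*}$. The genuine gap is that Corollary~\ref{MinDis}, unlike Lemma~\ref{lem:sd8}, carries \emph{no} hypothesis on divisibility by $3$: it asserts minimum distance at least $8$ for all constructed codes, including those with $3\mid a$ (such as $a=12,15,18,21,\dots$, which appear in Table~\ref{rate-table}, and which are needed later, e.g.\ in Theorem~\ref{LDPC_pseudo} for arbitrary even $a$). Your argument, which you yourself flag as operating ``under the hypothesis $3\nmid a$ inherited from that lemma,'' therefore proves a strictly weaker statement. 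The case $3\mid a$ cannot be closed by the inequality $d_{\min}\geq s^{*}$ alone: the configuration $S_1$ is excluded as a stopping set only by using $3\nmid a$ (the contradiction there comes from $3D(x_1,1)\equiv 3D(x_2,1)\bmod a$ forcing $D(x_1,1)=D(x_2,1)$), so when $3\mid a$ the stopping distance may genuinely be $6$.

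The missing idea is that codeword supports are not arbitrary stopping sets: every row must meet them in an \emph{even} number of columns, whereas a stopping set only requires $0$ or at least $2$. The paper exploits this refinement to settle $3\mid a$. First, the exclusion of $S_2$ in the proof of Lemma~\ref{lem:sd8} never used $3\nmid a$ (it rests on $2y_1\equiv 2y_2\equiv 2y_3\bmod a$ forcing two of the $y_i$ to coincide), so $S_2$ cannot occur for any $a$. Second, although $S_1$ may occur as a stopping set when $3\mid a$, it meets rows $x_1$ and $x_2$ in exactly three columns each, an odd number, so the corresponding six columns cannot sum to zero modulo $2$ and $S_1$ cannot be the support of a codeword. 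Since every set of fewer than $8$ linearly dependent columns would have to realize one of these two configurations, no codeword of weight less than $8$ exists even when $3\mid a$. To repair your proof you need to append this parity argument for the case $3\mid a$.
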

\begin{proof}
By  definition every set of linearly dependent columns sums to zero modulo 2, thus this set of columns intersects a row in an even number of ones (possibly zero ones). Thus any set of linearly dependent columns forms a stopping set and so the stopping distance of these LDPC codes is a lower bound for the minimum distance. If $3\nmid a$, then the stopping distance of these LDPC codes is at least $8$ by Lemma \ref{lem:sd8}, implying  the minimum distance of the LDPC code is at least $8$. If $3\mid a$ then by the proof of Lemma \ref{lem:sd8} $S_2$ cannot be a subset of the columns of the parity-check matrix of the LDPC. Furthermore, $S_1$ cannot define a linearly independent set as  $S_1$ intersects rows $x_1$ and $x_2$ an odd number of times. Hence the minimum distance of the code is also at least $8$ when $3\mid a$.
\end{proof}
A computer search shows that, for $a\leq 26$, all classified non-isomorphic DCA$(3;a)$ constructed in \cite{DDHKR} produce $(a^2-a,4,\{a-1,a\})$-near regular binary LDPC codes that have minimum distance 8.

It is also possible to prove that the infinite family of LDPC codes constructed from the DCA$(3;a)$ given by Equation \eqref{eq:DCA}
have minimum distance $8$.

Let $a$ be even. It is known that the matrix
$D=[D(j,g)]$, where
\begin{eqnarray}\label{P-DCA}
D(j,g)=\left\{\begin{array}{ll}
0,&\mbox{if }g=0\\
j,&\mbox{if }g=1 \\
\left\{\begin{array}{ll}
2j+1   & \mbox{for } 0 \leq j \leq \frac{a}{2}-1,\\
2(j-\frac{a}{2})  &  \mbox{for } \frac{a}{2} \leq j \leq a-1,
\end{array}\right. & \mbox{if }g=2
\end{array}
\right.\label{eq:DCA}
\end{eqnarray}
forms a DCA$(3;a)$.

\begin{lemma} Suppose $D=[D(i,j)]$ is the DCA$(3;a)$ given by Equation \eqref{eq:DCA}. Then the
$(a^2-a,4,\{a-1,a\})$-near regular binary LDPC with parity-check matrix ${\overline H}$ has minimum distance $8$.
\end{lemma}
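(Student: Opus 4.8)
The plan is to pair the unconditional lower bound with a single explicit codeword. By Corollary \ref{MinDis} the minimum distance of the $(a^2-a,4,\{a-1,a\})$-near regular code is already at least $8$, so it suffices to exhibit $8$ distinct columns of ${\overline H}$ whose sum over ${\Bbb Z}_2$ is zero; equivalently, $8$ columns that meet each row of ${\overline H}$ in an even number of $1$'s. I index a column by the pair $(v,q)$ with $b=va+q$ and $v\neq r_0$, and recall from Lemma \ref{lem:w_c=4} and \eqref{eq:xyzt} that its four $1$'s occupy the positions $v$, $q$, $v+q\bmod a$, $q+D(v,2)\bmod a$ inside the four row-bands $V_{-1},V_0,V_1,V_2$ respectively. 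Hence a set of columns is a codeword exactly when the four multisets of values $v$, $q$, $v+q$, and $q+D(v,2)$ each have every element occurring an even number of times.

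First I would isolate the right building block. For $0\le w\le \frac{a}{2}-1$ and $s\in{\Bbb Z}_a$ consider the four columns $(w,s)$, $(w,s+\frac a2)$, $(w+\frac a2,s)$, $(w+\frac a2,s+\frac a2)$. Because the two $v$-values and the two $q$-values each differ by $\frac a2$, one checks directly that $v$ is hit twice as $w$ and twice as $w+\frac a2$, that $q$ is hit twice as $s$ and twice as $s+\frac a2$, and that $v+q$ is hit twice as $w+s$ and twice as $w+s+\frac a2$; so this block already meets $V_{-1},V_0,V_1$ evenly. Using \eqref{eq:DCA}, namely $D(w,2)=2w+1$ and $D(w+\frac a2,2)=2w$ for $0\le w\le \frac a2-1$, its four positions in $V_2$ form the translate $s+2w+E$ of the fixed set $E=\{0,1,\tfrac a2,\tfrac a2+1\}$, which consists of four distinct residues for $a\ge 4$.

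The key point is that the shape $E$ is independent of $w$, so two such blocks can be aligned on their $V_2$-positions. I would choose distinct representatives $w_1\neq w_2$ in $\{0,\dots,\frac a2-1\}$ whose pairs $\{w_i,w_i+\frac a2\}$ both avoid the deleted index $r_0$, and set $s_1=0$ and $s_2=2(w_1-w_2)\bmod a$; then both blocks have $V_2$-positions equal to $2w_1+E$. Their $v$-values are disjoint, so the two blocks are disjoint and give $8$ distinct columns; their union still meets $V_{-1},V_0,V_1$ evenly (a sum of even incidences), while each of the four rows in $2w_1+E$ is now met exactly twice. Thus these $8$ columns form a codeword of weight $8$, and together with Corollary \ref{MinDis} the minimum distance is exactly $8$.

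The hard part is twofold. First, one must discover the aligned double-parallelogram and verify the $V_2$-alignment, which rests squarely on the explicit form \eqref{eq:DCA}: it is precisely the splitting $D(w,2)=2w+1$ versus $D(w+\frac a2,2)=2w$ that turns the $V_2$-offsets into a $w$-independent translate of $E$, making cancellation possible. Second is the bookkeeping for the removed row and column block: since $D(r_0,2)-D(r_0,1)=\frac a2$ forces $r_0\in\{\frac a2-1,\frac a2\}$ by \eqref{eq:DCA}, and each of these values lies in exactly one pair $\{w,w+\frac a2\}$, avoiding $r_0$ costs only a single pair, leaving at least two admissible choices of $w$ as soon as $a\ge 6$. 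The remaining base case $a=4$ is exactly Example \ref{ex-PE}, whose displayed parity-check matrix is shown there to have minimum distance $8$.
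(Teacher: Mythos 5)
Your proof is correct, and it shares the paper's two-step skeleton---invoke Corollary \ref{MinDis} for the lower bound of $8$, then exhibit an explicit weight-$8$ codeword---but the codeword itself is genuinely different. The paper simply lists eight concrete columns $b_1,\dots,b_8$ (with $v$-values $0,1,\frac{a}{2}-2,\frac{a}{2}-1$ and $q$-values $1,\frac{a}{2}-2,\frac{a}{2}+2,a-1$) forming a single closed chain whose sets $C_{b_i}$ pair off row by row, verified by direct inspection. You instead build two $2\times 2$ ``rectangles'' of columns $\{w_i,w_i+\frac{a}{2}\}\times\{s_i,s_i+\frac{a}{2}\}$, note that each rectangle is automatically even on $V_{-1},V_0,V_1$, and that its $V_2$-footprint is the translate $s_i+2w_i+E$ of the fixed set $E=\{0,1,\frac{a}{2},\frac{a}{2}+1\}$---this is exactly where the split definition \eqref{eq:DCA} enters---so the choice $s_2=2(w_1-w_2)$ makes the two footprints coincide and cancel; I checked the $V_2$ computation and the alignment (e.g.\ $a=6$, $w_1=1$, $w_2=2$ gives eight distinct columns meeting every row zero or two times). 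What your route buys is robustness at the boundary: you track the deleted block $r_0\in\{\frac{a}{2}-1,\frac{a}{2}\}$ and show the construction survives either admissible choice of $r_0$ once $a\ge 6$, falling back on Example \ref{ex-PE} for $a=4$. The paper's displayed codeword, by contrast, implicitly requires $r_0=\frac{a}{2}$ (its columns $b_7,b_8$ use $v=\frac{a}{2}-1$, which is the other possible deleted index) and its columns actually collide for small orders (for $a=6$ one has $b_1=b_2=1$ and $b_3=b_5$), so your argument covers cases that the paper's explicit construction, read literally, does not.
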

\begin{proof}
We know by Corollary \ref{MinDis} that the minimum distance is at least $8$.
It is also easy to see that columns of $\overline{H}$ corresponding to the blocks given below sum to zero mod 2 and thus form a linearly dependent set of columns.
\begin{eqnarray*}
C_{b_1}&=&\{0,a+1,2a+1,3a+2\},\\
C_{b_2}&=&\{0,3a/2-2,5a/2-2,7a/2-1\},\\
C_{b_3}&=&\{1,3a/2-2,5a/2-1,7a/2+1\}, \\
C_{b_4}&=&\{1, 2a-1,2a,3a+2\},\\
C_{b_5}&=&\{a/2-2,a+1,5a/2-1,4a-2\}, \\
C_{b_6}&=&\{a/2-2,3a/2+2,2a,7a/2-1\},\\
C_{b_7}&=&\{a/2-1,3a/2+2,2a+1,7a/2+1\}, \\
C_{b_8}&=&\{a/2-1,2a-1,5a/2-2,4a-2\},
\end{eqnarray*} where $b_1=1$, $b_2=\frac{a}{2}-2$, $b_3=\frac{3a}{2}-2$, $b_4=2a-1$, $b_5=a(\frac{a}{2}-2)+1$, $b_6=a(\frac{a}{2}-2)+\frac{a}{2}+2$, $b_7=a(\frac{a}{2}-1)+\frac{a}{2}+2$, $b_8=a(\frac{a}{2}-1)+a-1$. Hence, there exists a code word of weight $8$. \end{proof}

Thus we have proved the following theorem:
\begin{theorem}\label{LDPC_pseudo}
Let ${\overline H}$ be the parity-check matrix based on the DCA$(3;a)$ given by Equation \eqref{eq:DCA}. Then ${\overline H}$ is the parity-check matrix of a
$(a^2-a,4,\{a-1,a\})$-near regular binary LDPC code of length $a^2 - a$, girth at least 6,  rate at least $1-(4a-6)/(a^2 - a)$ and   minimum distance $8$.
\end{theorem}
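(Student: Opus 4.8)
The plan is to assemble the properties of $\overline{H}$ that have already been established earlier in the paper and then supply the one genuinely new ingredient, namely an explicit weight-$8$ codeword. Concretely, Lemma~\ref{lem:H-rho} already tells us that $\overline{H}$ built from the array in Equation~\eqref{eq:DCA} is the parity-check matrix of a $(a^2-a,4,\{a-1,a\})$-near regular binary LDPC code satisfying the RC-constraint, which by Lemma~\ref{girth} forces girth at least $6$. Lemma~\ref{lem:rankH-rho} bounds the rank of $\overline{H}$ by $4a-6$, and the rate lemma converts this into the stated lower bound $1-(4a-6)/(a^2-a)$ on the rate. Thus the only new claim to verify is that the minimum distance equals exactly $8$, and the theorem is really just the conjunction of these facts with the minimum-distance computation.

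For the minimum distance I would argue in two directions. The lower bound, minimum distance $\geq 8$, is immediate from Corollary~\ref{MinDis}, which holds for every even $a$ regardless of the particular DCA used. So the substantive step is the matching upper bound: exhibiting an actual codeword of weight $8$, equivalently a set of $8$ columns of $\overline{H}$ that is linearly dependent over $\mathbb{Z}_2$. First I would check that the array $D$ of Equation~\eqref{eq:DCA} is indeed a DCA$(3;a)$ in standard form (this is asserted as ``known'' in the excerpt, so I may cite it), then use the column-description $C_{b}=\{x,y,z,t\}$ from Equation~\eqref{eq:xyzt} together with the explicit values $D(v,1)=v$ and the piecewise $D(v,2)$ to compute the four supporting rows for each of the eight chosen columns $b_1,\dots,b_8$. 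The eight column indices are given explicitly in terms of $a$, so the verification is a direct substitution into Lemma~\ref{lem:w_c=4}.

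The key combinatorial point to confirm is that the multiset of $32$ row-indices appearing across $C_{b_1},\dots,C_{b_8}$ has every value occurring an \emph{even} number of times; this is exactly the condition that the corresponding columns sum to the zero vector mod $2$, hence span a dependency and yield a codeword of weight $8$. I would organise this check by the four row-blocks $V_{-1},V_0,V_1,V_2$ separately: within each block the entries are the residues listed in the second, then the $a+\cdot$, $2a+\cdot$, $3a+\cdot$ coordinates, and I would tally that each residue shows up twice. One must also confirm that the eight columns are genuinely distinct and that none of the supporting rows is the deleted row $r_0$ (recall $r_0$ is one of the two rows realizing the repeated difference $n=a/2$), so that the constructed dependency actually lives in $\overline{H}$ and not merely in the reinstated $\overline{H}_{r_0}$.

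The main obstacle is bookkeeping rather than conceptual: the indices in Equation~\eqref{eq:DCA} are piecewise and the eight columns straddle the boundary $j=a/2$, so the reductions modulo $a$ in computing $z=q+D(x,1)$ and $t=q+D(x,2)$ must be carried out carefully, with attention to whether each $x_i$ falls in the range $0\le x\le a/2-1$ or $a/2\le x\le a-1$. I expect the pairing of equal row-indices to emerge cleanly once the substitutions are done, since the eight blocks were evidently engineered so that the $x$-coordinates pair up, the $y$-coordinates pair up, and likewise the $z$- and $t$-coordinates; the only real risk is an off-by-$a$ error in one of the modular reductions, which a single worked example (say the case $a=12$, matching $\overline{H}_{12}$ of Example~\ref{ex-PE}) would let me sanity-check before presenting the general argument.
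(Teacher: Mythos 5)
Your reduction is exactly the paper's: everything except the minimum distance follows from Lemma~\ref{lem:H-rho} (near-regular structure and RC-constraint), Lemma~\ref{girth} (girth at least 6), Lemma~\ref{lem:rankH-rho} together with the rate lemma (rate at least $1-(4a-6)/(a^2-a)$), and the lower bound on minimum distance is Corollary~\ref{MinDis}, valid for every even $a$. Your added observation that the dependent columns must avoid the deleted block and row $r_0$ is a genuinely careful point: the paper's own codeword uses columns from block $v=\frac{a}{2}-1$, and since the two rows realizing the repeated difference $n=\frac{a}{2}$ in the array of Equation~\eqref{eq:DCA} are $j=\frac{a}{2}-1$ and $j=\frac{a}{2}$, the construction implicitly requires choosing $r_0=\frac{a}{2}$; the paper never says this.

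There is, however, a genuine gap: the matching upper bound is never actually established. You write that ``the eight column indices are given explicitly in terms of $a$'' and that you ``expect the pairing of equal row-indices to emerge cleanly \ldots since the eight blocks were evidently engineered'' so that the coordinates pair up --- but in a blind proof no such columns have been given, and no engineering has been done; the existence of such a configuration is precisely the claim requiring proof. The paper's proof consists essentially of this construction: it exhibits the columns $b_1=1$, $b_2=\frac{a}{2}-2$, $b_3=\frac{3a}{2}-2$, $b_4=2a-1$, $b_5=a(\frac{a}{2}-2)+1$, $b_6=a(\frac{a}{2}-2)+\frac{a}{2}+2$, $b_7=a(\frac{a}{2}-1)+\frac{a}{2}+2$, $b_8=a(\frac{a}{2}-1)+a-1$, drawn from the four column blocks $v\in\{0,1,\frac{a}{2}-2,\frac{a}{2}-1\}$, and verifies that the supports $C_{b_1},\dots,C_{b_8}$ cover each row exactly twice. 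Finding a dependent set of size exactly $8$ hinges on exploiting the piecewise definition of $D(\cdot,2)$ across the boundary $j=\frac{a}{2}$ (note the paired $x$-coordinates $\{0,1\}$ and $\{\frac{a}{2}-2,\frac{a}{2}-1\}$ sit on opposite sides of how $2j+1$ versus $2(j-\frac{a}{2})$ behave); it is not deferrable bookkeeping, and a priori such a set might not exist at all --- indeed for the DM-based codes of Theorem~\ref{LDPC_pseudo2} no weight-$8$ codeword exists. Without producing the set, your argument proves only minimum distance at least $8$, not equal to $8$.
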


Let $a$ be odd. It can be shown that for $k=3$ and $\alpha$ satisfying gcd$(\alpha,a)=1$ and gcd$(\alpha-1,a)=1$, the array
$D=[D(j,g)]$, where
\begin{eqnarray}\label{eqDM}
D(j,g)=\left\{\begin{array}{ll}
0,&\mbox{if }g=0\\
j,&\mbox{if }g=1 \\
\alpha j&\mbox{if }g=2\\
\end{array}
\right.
\end{eqnarray}
forms a DM$(3;a)$ in standard form. Choosing $\alpha=2$ will give a DM$(3;a)$ for all odd $a$. But for better performance, we will assume that $3\nmid a$, $5\nmid a$ and choose $\alpha=(a-1)/2$. When $3\nmid a$, $5\nmid a$ and $\alpha=(a-1)/2$ the constructed codes have improved stopping distance and minimum distance.
\begin{lemma}
 Assume $a$ is an odd positive integer satisfying gcd$(a,3)=1$ and gcd$(a,5)=1$. Let $H$ be the parity-check matrix based on the DM$(3;a)$ given by Equation \eqref{eqDM} with $\displaystyle \alpha=\frac{a-1}{2}$. Then $H$ is the parity-check matrix of a $(a^2,4,a)$-regular binary QC-LDPC code with  minimum distance $10$.
\end{lemma}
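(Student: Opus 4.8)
The plan is to prove the two halves of the claim separately — that the minimum distance is at least $10$ and that it is at most $10$ — and then record the quasi-cyclic structure. The starting observation is that every codeword has \emph{even} weight: by Lemma \ref{lem:w_c=4} each column $b=av+q$ of $H$ has its unique $1$ in the $R$-block in row $v$, so a sum-zero set of columns meets each row of $V_{-1}$ an even number of times, forcing each $x$-value to occur an even number of times, and hence the weight to be even. Since Corollary \ref{MinDis} already gives minimum distance at least $8$, it suffices to (i) rule out codewords of weight $8$ and (ii) exhibit one of weight $10$.

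First I would translate the problem into a configuration statement. Writing each column $b=av+q$ as the point $(x,y)=(v,q)\in\mathbb{Z}_a\times\mathbb{Z}_a$, Lemma \ref{lem:w_c=4} identifies its four incident rows with the four ``coordinates'' $x$, $y$, $z=x+y$ and $t=\alpha x+y$ (mod $a$). A weight-$w$ codeword is then a set of $w$ distinct points whose multiset of values is even in each of the four parallel classes $x,\ y,\ x+y,\ \alpha x+y$. By Corollary \ref{MinDis} there is no such configuration of size $2,4$ or $6$, so every weight-$8$ configuration is automatically indecomposable. Next I would cut down the combinatorial types. A class using a single value is impossible (it forces an odd cover in another class), and a class using exactly two values with multiplicities $(6,2)$ or $(4,4)$ is impossible as well: in the $(4,4)$ case even cover in the $z$-class forces the common $4$-element $y$-set $Y$ to satisfy $Y=Y+\delta$, where $\delta\neq 0$ is the difference of the two $x$-values; then $Y$ is a union of cosets of $\langle\delta\rangle$, so $|\langle\delta\rangle|$ divides both $4$ and the odd number $a$, giving $\delta\equiv 0$, a contradiction. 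Hence every parallel class has partition $\{4,2,2\}$ or $\{2,2,2,2\}$, leaving finitely many types, which I would list by the same exhaustive search used for $S_1,S_2$ in Lemma \ref{lem:sd8}.

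The core step is to kill each surviving type by summing its defining relations, exactly as in Lemma \ref{lem:sd8}. Each even pair in the $z$-class yields a relation $x_i+y_i\equiv x_j+y_j$ and each even pair in the $t$-class a relation $\alpha x_i+y_i\equiv\alpha x_j+y_j$; adding suitable subsets produces a congruence $c(\alpha)\,(X-X')\equiv 0\pmod a$ in which, after substituting $\alpha=(a-1)/2$, the integer $2c(\alpha)$ collapses to $\pm 3$ or $\pm 5$. The representative case is the cyclic $\{2,2,2,2\}$ type with $x$-pairs $\{X_1,X_1\},\{X_2,X_2\},\{X_3,X_3\},\{X_4,X_4\}$: writing $u=X_1-X_4$ and $v=X_2-X_3$, the $z$-relations give $u=s-p$, $v=s-r$ (so $r-p=u-v$), while the $t$-relations give $\alpha(X_1-X_2)=\alpha(X_3-X_4)=r-p$ and $\alpha(X_1-X_2-X_3+X_4)\equiv 0$. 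Summing the first two $t$-relations gives $\alpha(u-v)=2(r-p)=2(u-v)$, i.e. $(\alpha-2)(u-v)\equiv 0$; since $2(\alpha-2)\equiv -5\pmod a$ and $\gcd(5,a)=1$ this forces $u=v$, and $\gcd(\alpha,a)=1$ forces $X_1-X_2-X_3+X_4\equiv 0$. Combining these with $a$ odd yields $2(X_3-X_4)\equiv 0$, hence $X_3=X_4$, contradicting distinctness. This is precisely where $\gcd(a,5)=1$ enters; the $\{4,2,2\}$ types are handled identically, the summing factor there being $3$, matching $\gcd(a,3)=1$ (a condition in any case forced by Equation \eqref{eqDM} being a valid DM, i.e. $\gcd(\alpha-1,a)=1$).

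Finally I would settle the two remaining points. For the upper bound I would exhibit an explicit set of ten columns whose $C_b$-sets have even cover in all four classes, giving a weight-$10$ codeword and hence minimum distance exactly $10$. For the quasi-cyclic structure I would use that $D(v,2)=\alpha v$ is linear in $v$: reordering the $a$ column-blocks and the rows within each of the four row-bands by the permutation $v\mapsto\alpha v$ (a permutation since $\gcd(\alpha,a)=1$) turns each band into a block-circulant strip, exhibiting $H$ as the cyclic lifting of the fully connected $4\times a$ base graph with lifting factor $a$, as foreshadowed in Section \ref{DCA}. The main obstacle is the middle step: completing the enumeration of weight-$8$ types and verifying that the summing argument kills every one uniformly — the displayed cyclic case shows the mechanism and pins down $\gcd(a,5)=1$ as the exact obstruction, but the book-keeping for the $\{4,2,2\}$ types, where one class carries a multiplicity-$4$ value, is where the care is required.
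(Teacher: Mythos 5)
Your plan follows the same route as the paper's proof: the even-weight observation, exclusion of weight-$8$ codewords by summing congruences over the possible configuration types, an explicit dependent set of ten columns for the upper bound, and row/column permutations for the quasi-cyclic form (the paper delegates the last two items to Theorem~\ref{th:QC} and Theorem~\ref{Min10} in the appendices). Your structural reductions are sound, and your coset argument ruling out an $x$-class of type $(4,4)$ (that $Y=Y+\delta$ forces $|\langle\delta\rangle|$ to divide both $4$ and the odd number $a$) is actually cleaner than the paper's corresponding computation.

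However, there is a genuine gap at the core of the argument: the exclusion of weight-$8$ configurations is never actually carried out. You work a single pairing of a single type (the cyclic $\{2,2,2,2\}$ type with one particular matching of the $z$- and $t$-values) and assert that the remaining types are ``handled identically'' with ``summing factor $3$.'' In the paper this step is the bulk of the proof of Theorem~\ref{Min10}: for each of the two non-isomorphic $(x,y)$-types one must enumerate all admissible pairings of the eight $z$-values and the eight $t$-values (the paper's Case 1-a/1-b/1-c, each with sub-cases i)--v), and Case 2 with sub-cases a)--g)), and the contradictions obtained are not uniform --- some need only that $a$ is odd, some need $\gcd(\alpha-1,a)=1$ (i.e. $3\nmid a$), some $\gcd(\alpha+1,a)=1$, some $\gcd(2\alpha-1,a)=\gcd(a-2,a)=1$, and only particular sub-cases need $\gcd(\alpha-2,a)=1$ (i.e. $5\nmid a$). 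A single displayed computation (which moreover uses undefined symbols $s,p,r$) cannot substitute for this enumeration; without it, weight $8$ has not been excluded. Similarly, the weight-$10$ codeword and the quasi-cyclic permutations are promised rather than produced: the paper's QC transformation is not a reordering of blocks by $v\mapsto\alpha v$ but the shear-type column map $pa+q\mapsto(q-p \bmod a)a+p$ combined with row maps involving $2^{-1}$ and $(\alpha+1)^{-1}$ within the last two bands, and the explicit ten columns $C_{b_1},\dots,C_{b_{10}}$ must be written down and checked to sum to zero; both remain to be done.
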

\begin{proof}
In Theorem \ref{th:QC} (given in the appendix) we show that $H$ can be put into the quasi-cyclic form.
Furthermore in Theorem \ref{Min10} we show that  the minimum distance is at least $10$. We now give a dependent set of $10$ columns of $H$ to show that the minimum distance is exactly $10$.

Let $\alpha=(a-1)/2$ and $\alpha^{-1}$ be the multiplicative inverse of $\alpha$ in $mod\ a$ so that $\alpha.\alpha^{-1}\equiv 1\ mod\ a$. Observe that $\alpha^{-1}\not\equiv 1\ mod\ a$ and $\alpha^{-1}\not\equiv 2\ mod\ a$.  Now it can be seen that the columns of $H$ corresponding to the blocks given below sum to zero mod 2 and thus form a linearly dependent set of columns.
\begin{eqnarray*}
C_{b_1}&=&\{0,a+2,2a+2,3a+2\}, \\
C_{b_2}&=& \{0,a+\alpha+1,2a+\alpha+1,3a+\alpha+1\},\\
C_{b_3}&=&\{1,a+2,2a+3,3a+\alpha+2\}, \\
C_{b_4}&=&\{1,\alpha,\alpha+1,4a-1\},\\
C_{b_5}&=&\{2,a,2a+2,4a-1\}, \\
C_{b_6}&=&\{2,a+1,2a+3,3a\},\\
C_{b_7}&=&\{\alpha^{-1},a+1,2a+\alpha^{-1}+1,3a+2\}, \\
C_{b_8}&=&\{\alpha^{-1},a+\alpha+1,2a+\alpha^{-1}+\alpha+1,3a+\alpha+1\},\\
C_{b_9}&=&\{\alpha^{-1}+1,a,2a+\alpha^{-1}+1,3a+\alpha+1\}, \\
C_{b_{10}}&=&\{\alpha^{-1}+1,a+\alpha,2a+\alpha^{-1}+\alpha+1,3a\},
\end{eqnarray*}
where $b_1=2$, $b_2=\alpha+1$, $b_3={a+2}$, $b_4=a+\alpha$, $b_5=2a$, $b_6=2a+1$, $b_7=\alpha^{-1}a+1$, $b_8=\alpha^{-1}a+\alpha+1$, $b_9=(\alpha^{-1}+1)a$ and $b_{10}=(\alpha^{-1}+1)a+\alpha$. Hence, there exists a code word of weight $10$.
\end{proof} Thus we have proved the following theorem:
\begin{theorem}\label{LDPC_pseudo2}
 Assume $a$ is an odd positive integer satisfying $(a,3)=1$ and $(a,5)=1$. Let $H$ be the parity-check matrix based on the DM$(3;a)$ given by Equation \eqref{eqDM} where $\alpha=(a-1)/2$. Then $H$ is the parity-check matrix of a
$(a^2,4,a)$-regular binary QC-LDPC code of length $a^2$, girth at least $6$,  rate equal to $\displaystyle 1-\frac{4a-3}{a^2}$ and  minimum distance $10$.
\end{theorem}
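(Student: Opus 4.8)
The plan is to treat the theorem as an assembly of properties, each of which either follows from the general construction already analysed or is supplied by the lemma immediately preceding the statement. The one piece of genuinely new verification is that the specific array $D$ defined by Equation \eqref{eqDM} with $\alpha=(a-1)/2$ really is a DM$(3;a)$ in standard form; once this is in hand, every structural property of $H$ is inherited from Construction 1 and its associated lemmata.

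First I would confirm the difference-matrix property. By the remark accompanying Equation \eqref{eqDM}, the array is a DM$(3;a)$ precisely when $\gcd(\alpha,a)=1$ and $\gcd(\alpha-1,a)=1$. Since $a$ is odd, $2$ is invertible modulo $a$ and $2\alpha\equiv a-1\equiv -1$, so $\alpha\equiv -2^{-1}$ and hence $\gcd(\alpha,a)=\gcd(2^{-1},a)=1$ automatically. For the second condition, $2(\alpha-1)\equiv a-3\equiv -3$, so $\alpha-1\equiv -3\cdot 2^{-1}$ and $\gcd(\alpha-1,a)=\gcd(3,a)$, which equals $1$ exactly because $\gcd(a,3)=1$. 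This locates precisely where the hypothesis $3\nmid a$ enters at the level of the underlying combinatorics. With $D$ confirmed to be a DM$(3;a)$, Construction 1 produces $H$ in the form of Equation \eqref{eq:H}, and Lemma \ref{lem:H} gives that $H$ is the parity-check matrix of a $(a^2,4,a)$-regular binary LDPC code satisfying the RC-constraint; Lemma \ref{girth} then yields girth at least $6$, and Lemma \ref{lem:rankH} gives $\mathrm{rank}(H)=4a-3$, hence rate exactly $1-(4a-3)/a^2$. The quasi-cyclic structure is supplied by Theorem \ref{th:QC} in the appendix, so I would simply cite it.

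The substantive work lies in the minimum distance, which I would establish by two inequalities. The upper bound is immediate: the ten column-index sets $C_{b_1},\dots,C_{b_{10}}$ exhibited in the preceding lemma sum to zero modulo $2$, so there is a codeword of weight $10$. For the lower bound, Corollary \ref{MinDis} already gives minimum distance at least $8$, so it remains only to rule out dependent column sets of size $8$ and $9$; this is the content of Theorem \ref{Min10}, and it is the step I expect to be the main obstacle.

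I anticipate proving that lower bound in the spirit of Lemma \ref{lem:sd8}: enumerate the admissible shapes of a dependent set of eight or nine columns (organised, as in the stopping-set analysis $S_1,S_2$, by how many distinct $x_i$ values appear), translate each shape into a system of congruences using the incidences of Lemma \ref{lem:w_c=4}, and exploit the explicit linear form $D(v,2)=\alpha v$ to reduce each system to a coincidence $D(v,1)=D(v',1)$ or $D(v,2)=D(v',2)$, which is forbidden in a DM$(3;a)$. I expect the hypothesis $\gcd(a,5)=1$ to be exactly what eliminates the configurations that survive $\gcd(a,3)=1$, so the delicate part is the exhaustive but structured bookkeeping over these finitely many shapes rather than any single hard estimate.
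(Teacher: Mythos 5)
Your proposal is correct and follows essentially the same route as the paper: the theorem is assembled from Construction 1 together with Lemmata \ref{lem:H}, \ref{girth} and \ref{lem:rankH} (regularity, girth, rate), Theorem \ref{th:QC} for the quasi-cyclic form, Theorem \ref{Min10} for the lower bound of $10$ (proved there exactly as you sketch, by enumerating dependent $8$-column configurations via the multiplicities of the $x_i$'s and reducing to congruences killed by the gcd conditions on $3$ and $5$, with odd sizes excluded by a parity argument), and the explicit ten-column dependent set for the matching upper bound. Your added verification that $\alpha=(a-1)/2$ gives $\gcd(\alpha,a)=\gcd(\alpha-1,a)=1$ is precisely the role the paper's hypotheses play, so nothing is missing.
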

The matrix $H^*$ is the general quasi-cylic form of the parity check matrix for the code obtained in the above theorem.
\begin{eqnarray*}
H^*=\left[
\begin{array}{ccccccccccccc}
I&I&I&I&...&I\\
I&P^1&P^2&P^3&...&P^{a-1}\\
I&P^{2^{-1}}&P^{2*2^{-1}}&P^{3*2^{-1}}&...&P^{(a-1)*2^{-1}}\\
I&P^{(\alpha+1)^{-1}}&P^{2*(\alpha+1)^{-1}}&P^{3*(\alpha+1)^{-1}}&...&P^{(a-1)*(\alpha+1)^{-1}}\\
\end{array}\right].
\end{eqnarray*}

\section{ Performance Analysis}\label{PA}

In this section we present the error correcting performance of the proposed LDPC codes via simulations. In addition to the code rate and minimum distance, simulations can provide another indicator for the performance of an LDPC code. Here simulations have been conducted  over two different channels, AWGN and BEC. For clarity, we label  the codes for comparison using the quadruple $[m, R, w_c, w_r]$ consisting of code length $m$, code rate $R$, column weight $w_c$ and row weight $w_r$. For irregular or near-regular LDPC codes, the  row weight is given as $a-1$ or taken as an average which is indicated by $\sim$. Also to reduce verbiage we will refer to an $(a^2,4,a)$-regular binary QC-LDPC code constructed in Construction 1 using a DM$(3;a)$ as a  DM$(3;a)$ code and an $(a^2-a,4,\{a-1,a\})$-near regular binary LDPC code constructed in Construction 2 using a DCA$(3;a)$ as a  DCA$(3;a)$ code. In both channels, we compare the codes given in Table \ref{comparison-codes}. The  parity-check matrices are illustrated in Figures \ref{fig:dm43} - \ref{fig:mackay&neal} in Appendix \ref{PCM}.

\begin{table}[ht]
\caption{LDPC codes that were compared for BER and FER performance.}\label{comparison-codes}
\begin{center}
\begin{tabular}{|l|l|l|}
\hline
LDPC Code Type&$[m,R,w_c,w_r]$&Reference\\
\hline\hline
DM$(3;43)$&$[1849,0.91,4,43]$&Current paper\\
DCA$(3;44)$&$[1892,0.91,4,43]$&Current paper\\
TD-LDPC &$[1849,0.91,4,43]$&\cite{Gruner2013}\\
Lattice &$[1849,0.91,4,43]$&\cite{VasMil04}\\
Gallager  &$[1849,0.91,4,43]$&\cite{Gallager63}\\
PEG &$[1849,0.91,4,\sim 43]$ &\cite{XiaoPEG} \\
Mackay\&Neal &$[1908,0.89,4,36]$&\cite{MacKayNeal97} \\
\hline
\end{tabular}
\end{center}
\end{table}

These codes were chosen as they all have similar parameters, namely column weight $4$ and their rates are in the range $0.89$ to $0.91$. In addition, this presents the opportunity to compare codes with a very narrow range of lengths. Like the proposed codes, the transversal design code (termed TD-LDPC) as constructed in  \cite{Gruner2013} and  the code constructed using lattices (termed Lattice) as in  \cite{VasMil04} are structured LDPC codes, namely, they are constructed using certain combinatorial designs, finite geometries, etc. The other three codes, however, are pseudo-random in nature. The PEG code given here, is constructed via open-source software given in \cite{Lcrypto}. As for Mackay\&Neal code, we use the \emph{Encyclopedia of Sparse Graph Codes}, a database of sparse graph codes, written by David J.C. MacKay. Moreover, the simulations were performed via an open-source library called AFF3CT \cite{Cassagne2019a}, a toolbox dedicated to forward error correction, written in C++.

\subsection{The Additive White Gaussian Noise Channel (AWGN) }

The analysis has been performed through the transmission of the zero vector using binary-phase shift-key (BPSK) modulation over varying signal-to-noise ratios (SNRs, Eb/No) assuming transmission over AWGN channel. Since only binary messages are being transmitted, we chose the zero vector which allows errors to be added randomly across the entire vector. We used the belief propagation (BP) based decoding algorithm given in \cite{YPNA01} with the \emph{sum-product algorithm} (SPA) implementation. The procedure is iterated until the zero vector is obtained or a maximum number of iterations (100) is reached. Also, at each SNR level, we monitor the analysis until it reaches 50 wrongly decoded vectors. Figures \ref{fig:berawgn} and  \ref{fig:ferawgn} illustrate the decoding BER and FER performance of the codes in Table \ref{comparison-codes}.

\begin{figure}
	\centering
	\includegraphics[scale=0.7]{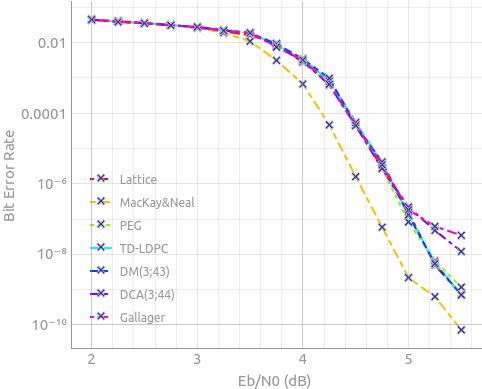}
	\caption{ BER comparison of the codes in Table \ref{comparison-codes} over AWGN.}
	\label{fig:berawgn}
\end{figure}

\begin{figure}
	\centering
	\includegraphics[scale=0.7]{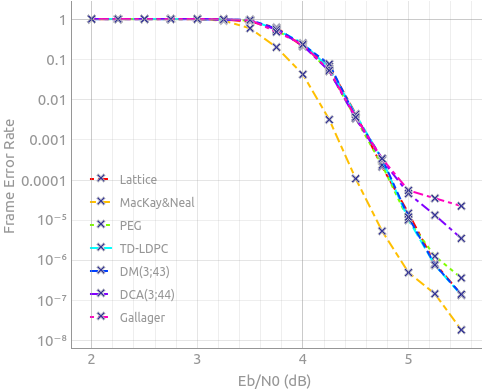}
	\caption{ FER comparison of the codes in Table \ref{comparison-codes} over AWGN.}
	\label{fig:ferawgn}
\end{figure}

As can be observed from Figures \ref{fig:berawgn} and  \ref{fig:ferawgn}, the performance of the DM$(3;43)$ code is better than the performance of the DCA$(3;44)$ code at higher Eb/No values. Theorem \ref{LDPC_pseudo} and \ref{LDPC_pseudo2} ensure that the minimum distances of the DM$(3;43)$ code and the DCA$(3;44)$ code are 10 and 8, respectively. Therefore, such a difference in their performances can be expected.

The other two structured codes, namely Lattice and TD-LDPC, have exactly the same parameters as the DM$(3;43)$ code, with Figures \ref{fig:dm43}, \ref{fig:td-ldpc}, and \ref{fig:lattice} showing that they are similar in the sparsity pattern of their matrices. Thus, they perform similarly as expected. However, the proposed code still has the advantage of its algebraic properties. Unlike the other two codes, we can calculate the code rate and the minimum distance precisely.

Considering FER after the 4.75 Eb/No level, the proposed codes outperform the Gallager code. In addition, the DM$(3;43)$ code and the PEG code perform similarly, with our code performing a bit better than the PEG code at 5.5 Eb/No level. Both BER and FER graphs show that the Mackay\&Neal code outperforms the other codes. The high-rate codes in the Encyclopedia of Sparse Graph Codes are generally known as having high minimum distance, but it's hard to determine the precise values (finding the minimum distance of an LDPC code in general is an NP-hard problem \cite{XiaoNP}). However, achieving a similar or better performance compared to some randomly generated codes, is promising, for instance as a further step, the proposed codes can be improved to have a higher minimum distance.

\subsection{Binary Erasure Channel (BEC) }
Additionally, an analysis has been conducted through the transmission of the zero vector using on-off keying (OOK) modulation over the BEC under varying error probabilities. For the decoder we used the belief propagation (BP) based decoding algorithm given in \cite{YPNA01} with the \emph{normalized minimum-sum} (NMS) implementation \cite{CF02}. As in the previous case, the maximum number of iterations is 100, and the maximum number of block errors 50. Figures \ref{fig:berbec} and  \ref{fig:ferbec} illustrate the decoding BER and FER performance of the codes in Table \ref{comparison-codes}.
\begin{figure}
	\centering
	\includegraphics[scale=0.7]{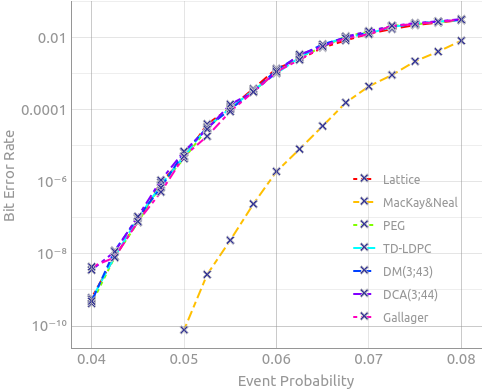}
	\caption{ BER comparison of the codes in Table \ref{comparison-codes} over BEC.}
	\label{fig:berbec}
\end{figure}
\begin{figure}
	\centering
	\includegraphics[scale=0.7]{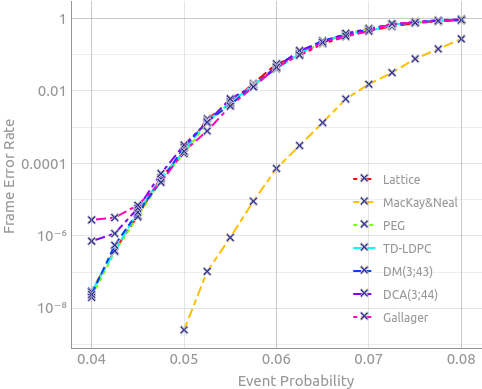}
	\caption{ FER comparison of the codes in Table \ref{comparison-codes} over BEC.}
	\label{fig:ferbec}
\end{figure}
The results are almost the same as those obtained over AWGN channel. While the DM$(3;43)$ code, the Lattice code, the TD-LDPC code, and the PEG code perform similarly, they outperform the DCA$(3;44)$ code and the Gallager code. Also, the Mackay\&Neal code performs much better than the others. It can be concluded that the proposed codes have similar stopping set size/distribution with some others. As in \cite{Gruner2013}, additional advantage is possible in that the proposed codes have the potential for  larger stopping sets. Such improvements can be investigated by discarding some rows in a DM$(3;a)$ or DCA$(3;a)$, as done in Construction 2.
\section{Conclusion}\label{Conclusion}
An explosion in the number of smart devices requires new generations of error correcting codes to meet the demand for ultra-reliable and low latency smart object communication. Furthermore, 5G networks need codes supporting variable code rates and lengths. Both LDPC and polar codes promise the requisite functionality and are currently being widely researched.

In this paper we  presented two new constructions of LDPC codes developed from difference matrices and difference covering arrays. When compared to previous constructions, the constructions presented here leverage the advantage of the underlying algebraic structure, which is the cyclic group with binary operation addition modulo an integer. These   algebraic structures, difference matrices and difference covering arrays exist for all orders $a$, allowing construction of an infinite family of LDPC codes and theoretical verification of the properties of the codes. In particular, for $a$ even, we presented LDPC codes with lengths at least $a^2-a$ and rate  at least $1-\frac{4a-6}{a^2-a}$. Similarly, for $a$ odd, we constructed  LDPC codes with length $a^2$ and  rate $1-\frac{4a-3}{a^2}$. Furthermore, for $a$ odd, we showed that the constructed codes are quasi-cyclic and provided $a$ is not divisible by $3$ or $5$, the codes have minimum distance at least $10$.
The simulation results presented in this paper, using standard decoding algorithms, showed that these LDPC codes perform well enough when compared to previous constructions of LDPC codes similar to ours, as well as to some randomly generated codes.

\backmatter

\bmhead{Acknowledgments}

E. Sule Yazici would like to thank RMIT University for travel support. This research was carried out during that visit.

\begin{appendices}

\section{Minimum Distance}\label{MinDist}
\begin{theorem}\label{Min10}
Let $a>3$ and  $H$ be the parity-check matrix based on the DM$(3;a)$ given by Equation \eqref{eqDM} with $\alpha=\frac{a-1}{2}$ where gcd$(a,3)=1$ and gcd$(a,5)=1$. Then $H$ is the parity-check matrix of a $(a^2,4,a)$-regular binary LDPC code with  minimum distance at least $10$.
\end{theorem}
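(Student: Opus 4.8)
The plan is to reduce the claim to excluding weight-$8$ codewords and then to run a congruence argument parallel to that of Lemma~\ref{lem:sd8}. By Corollary~\ref{MinDis} the code already has minimum distance at least $8$ (here $\gcd(a,3)=1$), so it suffices to show that no codeword has weight $8$, once I also observe that every codeword has \emph{even} weight. The latter is immediate from the $R_v$ block: a column $b=xa+q$ meets $V_{-1}$ only in row $x$, so in any codeword (whose support columns sum to $0$ over $\mathbb{Z}_2$) each value $x$ occurs an even number of times; summing over $x$ shows the total weight is even. Hence weight $9$ is automatically excluded, and the minimum distance is at least $10$ as soon as weight $8$ is ruled out.

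Next I would model a weight-$8$ codeword as eight distinct columns $b_1,\dots,b_8$ with $C_{b_i}=\{x_i,y_i,z_i,t_i\}$ as in Equation~\eqref{eq:xyzt}, subject to the requirement that in each of the four classes $V_{-1},V_0,V_1,V_2$ every occurring row index appears an even number of times. Writing $b_i=x_ia+q_i$ and using $D(x_i,1)=x_i$, $D(x_i,2)=\alpha x_i$ with $\alpha=(a-1)/2$ (so that $2\alpha\equiv-1\pmod a$), the four conditions say that the multisets $\{x_i\}$, $\{q_i\}$, $\{x_i+q_i\bmod a\}$ and $\{\alpha x_i+q_i\bmod a\}$ each have all multiplicities even. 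I would then classify configurations by the multiplicity profile of the $x_i$, exactly as the size-$6$ analysis split into the two-$x$ case $S_1$ and the three-$x$ case $S_2$. Since the multiplicities are even and sum to $8$, the profiles are $8$, $6{+}2$, $4{+}4$, $4{+}2{+}2$ and $2{+}2{+}2{+}2$; the profiles $8$ and $6{+}2$ die at once, because columns sharing an $x_i$ have distinct $q_i$, so the $V_0$-condition cannot be satisfied.

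For each surviving profile I would turn the pairings forced in $V_0,V_1,V_2$ into congruences of the shape $x_i+q_i\equiv x_j+q_j$ and $\alpha x_i+q_i\equiv\alpha x_j+q_j\pmod a$, eliminate the $q_i$ by subtraction, and sum suitable subsets around the induced cycle of matchings, obtaining relations purely among the $x_i$ whose coefficients are integer polynomials in $\alpha$. The point of the choice $\alpha=(a-1)/2$ is that these coefficients linearize: since $2\alpha\equiv-1$ and $a$ is odd (so $2$ is a unit), one computes $2(1-\alpha)\equiv3$ and $2(2-\alpha)\equiv5\pmod a$, so a relation $(1-\alpha)(x_i-x_j)\equiv0$ becomes $3(x_i-x_j)\equiv0$ while $(2-\alpha)(x_i-x_j)\equiv0$ becomes $5(x_i-x_j)\equiv0$. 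Because $\gcd(a,3)=\gcd(a,5)=1$, each such relation forces $x_i=x_j$, contradicting the distinctness built into the profile or, after tracing back, forcing two columns to agree in two classes, contrary to the RC-constraint of Lemma~\ref{lem:H}. With a generic $\alpha$ (for instance $\alpha=2$) some profile survives and produces a genuine weight-$8$ codeword, so this special $\alpha$, together with $5\nmid a$, is essential.

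The main obstacle is the completeness of this case analysis: one must enumerate, up to the translation and affine symmetries of the construction, every admissible pattern of $V_0$/$V_1$/$V_2$ pairings for the profiles $4{+}4$, $4{+}2{+}2$ and $2{+}2{+}2{+}2$, and verify that in each the summation collapses to one of the relations above. This is precisely where $5\nmid a$ must enter (Lemma~\ref{lem:sd8} needed only $3\nmid a$): the coefficient $2-\alpha$, equivalently a factor of $5$, arises only from the longer matchings that first become available at weight $8$, whereas the even coefficients $2$ and $4$ are harmless since $a$ is odd. Once all patterns are eliminated, no weight-$8$ codeword exists, and with the parity reduction the minimum distance is at least $10$; the explicit weight-$10$ word exhibited separately then shows it is exactly $10$.
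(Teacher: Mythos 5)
Your overall strategy coincides with the paper's own: reduce to excluding weight-$8$ codewords (via Corollary \ref{MinDis} together with the evenness of codeword weight forced by the $R_v$ blocks), model eight dependent columns by $C_{b_i}=\{x_i,y_i,z_i,t_i\}$ with even multiplicities in each of $V_{-1},V_0,V_1,V_2$, classify by the multiplicity profile of the $x_i$, and exploit the linearization $2\alpha\equiv-1\pmod a$ so that the obstructing coefficients become $3$, $5$, $a-2$ and units, which are then killed by $\gcd(a,3)=\gcd(a,5)=1$ and the oddness of $a$. The steps you actually execute (the parity argument, the elimination of the profiles $8$ and $6{+}2$, and the identities $2(1-\alpha)\equiv 3$, $2(2-\alpha)\equiv 5 \pmod a$) are correct and agree with the paper.

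There is, however, a genuine gap, and you name it yourself: the claim that \emph{every} admissible pattern of $V_0/V_1/V_2$ pairings for the profiles $4{+}4$, $4{+}2{+}2$ and $2{+}2{+}2{+}2$ collapses, after eliminating the $y_i$ and summing around the matching cycles, to a relation $c\,(x_i-x_j)\equiv 0$ or $c\,(y_i-y_j)\equiv 0$ with $c\in\{2,4,\alpha+1,\alpha-1,\alpha-2,2\alpha-1\}$ is asserted but never established, and this enumeration is not a routine check — it \emph{is} the proof. In the paper, any $x_i$ (or $y_i$) of multiplicity $4$ is first disposed of by a short argument on the forced $y$- and $z$-structure (one branch directly, the other by reduction to its Case 1a), so the profiles $4{+}4$ and $4{+}2{+}2$ never enter the heavy machinery; the remaining $2{+}2{+}2{+}2$ profile then splits into two non-isomorphic $x$--$y$ incidence patterns (Case 1: two $4$-cycles; Case 2: one $8$-cycle), each demanding a further layer of subcases (the paper's 1a(i)--(v), 1b(i)--(v), 1c, and 2a)--g)) before all of the coefficients $\alpha+1$, $\alpha-1$, $\alpha-2$, $2\alpha-1$ appear and are dispatched. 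Until that enumeration is carried out and shown exhaustive up to the symmetries you invoke, your argument is a proof plan rather than a proof; in particular the role of $5\nmid a$, which you correctly predict surfaces only in the longer cyclic matchings (the paper's Case 2g, where $(\alpha-2)(y_1-y_2)\equiv 0$ and $(2\alpha-1)(x_2-x_4)\equiv 0$ arise), is precisely the part left unverified.
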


\begin{proof}
To reduce excessive notation in this proof, all equalities will be assumed to be equivalences modulo $a$.

We will show that minimum distance of the constructed code is at least $10$. First observe that the minimum distance cannot be odd. Each column contains exactly one $1$ in the first $a$ rows and each row should have an even number of $1$'s in these columns so the number of columns in the linearly dependent set of columns should be even. If we can show that there are no $8$ columns that are linearly dependent then this would imply the minimum distance is at least $10$

Assume  $a$ is odd and the parity-check matrix $H$ contains $8$ columns that are linearly dependent, where the bitwise sum over the columns is taken modulo $2$. Then by Lemma \ref{lem:w_c=4} these columns take the form
\begin{eqnarray*}
\begin{array}{cc}
\{x_1,y_1,z_1,t_1\},&\{x_5,y_5,z_5,t_5\},\\
\{x_2,y_2,z_2,t_2\},&\{x_6,y_6,z_6,t_6\},\\
\{x_3,y_3,z_3,t_3\},&\{x_7,y_7,z_7,t_7\},\\
\{x_4,y_4,z_4,t_4\},&\{x_8,y_8,z_8,t_8\},
\end{array}
 \end{eqnarray*}
where  $x_i+y_i=z_i\ mod\ a$ and $\ \alpha x_i+y_i=t_i\ mod\ a$ for $i=1,\dots, 8$ and $\alpha=\frac{a-1}{2}$. Note that, as $a$ is odd, gcd$(a,\alpha+1)=1$; since gcd$(a,3)=1$ we have gcd$(\alpha-1,a)=1$ and as gcd$(a,5)=1$ we have gcd$(\alpha-2,a)=1.$
 Furthermore, note that since $a$ is odd, $2k=2l\ mod\ a$  implies $k=l$ modulo $a$ in general.

Under the assumption that these columns are linearly dependent, it follows that all elements $x_i,y_i,z_i,t_i$, for $1\leq i\leq 8$, occur an even number of times, with the RC-constraint implying these elements each occur either $2$ or $4$ times.

 Assume, without loss of generality (wlog), that $x_1$ occurs $4$ times. Then the RC-constraint implies there are $y_1$, $y_2$, $y_3$ and $y_4$ all distinct, each occurring exactly twice, similarly for $z_1,\ z_2,\ z_3,\ z_4$ and $t_1,\ t_2,\ t_3,\ t_4$. Note that since $a$ is odd, the equations $x_1+y_1=z_1$, $x_1+y_2=z_2$, $x_2+y_2=z_1$ and $x_2+y_1=z_2$ together results in a contradiction.

Thus there are two possibilities:

(i) Either it may be assumed that there exists $x_2$ and $x_3$ not necessarily distinct such that $x_2+y_2=z_1$, $x_2+y_3=z_2$, $x_3+y_4=z_3$ and $x_3+y_1=z_4$. Consequently $z_2-z_1=z_3-z_2$ and $z_4-z_3=z_1-z_4$, which gives $2z_2=z_1+z_3=2z_4$, contradicting the fact that $z_2$ and $z_4$ are distinct modulo $a$.

(ii) Or it may be assumed that there exists distinct $x_2$ and $x_3$ such that $x_2+y_1=z_3$, $x_2+y_2=z_4$, $x_3+y_3=z_1$ and $x_3+y_4=z_2$. This case is a special case of Case 1-a-) below where we set $x_1=x_4$ and it results in a contradiction.

A similar argument will show that it is not possible for any $y_i$ to occur $4$ times.

 Next assume that, for all $1\leq i\leq 4$, each $x_i$ and $y_i$ occurs exactly twice,  leading to two non-isomorphic subcases.

\begin{eqnarray}
\begin{array}{ccc}
Case\ 1-)&\mbox{or}&Case\ 2-)\\
\begin{array}{c}
\{x_1,y_1,z_1,t_1\}\\
\{x_1,y_2,z_2,t_2\}\\
\{x_2,y_1,z_3,t_3\}\\
\{x_2,y_2,z_4,t_4\}\\
\{x_3,y_3,z_5,t_5\}\\
\{x_3,y_4,z_6,t_6\}\\
\{x_4,y_3,z_7,t_7\}\\
\{x_4,y_4,z_8,t_8\}
\end{array}&&
\begin{array}{c}
\{x_1,y_1,z_1,t_1\}\\
\{x_1,y_2,z_2,t_2\}\\
\{x_2,y_2,z_3,t_3\}\\
\{x_2,y_3,z_4,t_4\}\\
\{x_3,y_3,z_5,t_5\}\\
\{x_3,y_4,z_6,t_6\}\\
\{x_4,y_4,z_7,t_7\}\\
\{x_4,y_1,z_8,t_8\}
\end{array}
\end{array}\label{eq:cases2b}
\end{eqnarray}

\noindent {\bf CASE 1-)}

First observe that, wlog, $x_1+y_1=z_1$, $x_1+y_2=z_2$, $x_2+y_1=z_2$ and $x_2+y_2=z_1$ will imply $z_1-z_2=y_1-y_2=z_2-z_1$, a contradiction. Hence we may assume that $\mid\hspace{-0.1cm}\{z_1,z_2,z_3,z_4\}\hspace{-0.1cm}\mid,\mid\hspace{-0.1cm}\{z_5,z_6,z_7,z_8\}\hspace{-0.1cm}\mid\geq 3$, and similarly $\mid\hspace{-0.1cm}\{t_1,t_2,t_3,t_4\}\hspace{-0.1cm}\mid,\mid\hspace{-0.1cm}\{t_5,t_6,t_7,t_8\}\hspace{-0.1cm}\mid\geq 3$.

Furthermore the description given above implies
\begin{eqnarray}
(x_2-x_1)&=&z_3-z_1=z_4-z_2,\label{eq:gen3}\\
(x_4-x_3)&=&z_7-z_5=z_8-z_6,\label{eq:gen4}\\
\alpha(x_2-x_1)&=&t_3-t_1=t_4-t_2,\label{eq:gen5}\\
\alpha(x_4-x_3)&=&t_7-t_5=t_8-t_6.\label{eq:gen6}\\
z_1+z_4=z_2+z_3,&&z_5+z_8=z_6+z_7,\label{eq:genz12}\\
t_1+t_4=t_2+t_3,&&t_5+t_8=t_6+t_7.\label{eq:gent12}
\end{eqnarray}
Now assume, wlog, $\mid\{z_1,z_2,z_3,z_4\}\mid=3$ and $z_1=z_4$.
 Equation \eqref{eq:genz12} gives $2z_1=z_2+z_3$. We may deduce  $\mid\{z_5,z_6,z_7,z_8\}\mid=3$ and, wlog, $z_5=z_8$ and $z_6=z_2$ so $z_7=z_3$. Equation \eqref{eq:genz12} implies $2z_5=z_2+z_3$. Now combining this information gives
 $2(z_1-z_5)=0$ and so  $z_1=z_4=z_5=z_8$, leading to $t_1,\ t_4,\ t_5,\ t_8$ being distinct. Now, assuming that $t_5=t_2$ and $t_8=t_3$ then either $(t_6,t_7)=(t_1,t_4)$ or $(t_6,t_7)=(t_4,t_1)$.

The former  implies $z_1-z_3=(x_1-x_2)=y_2-y_1=t_2-t_1=y_3-y_4=(x_4-x_3)=z_3-z_1$ a contradiction.

The latter implies $(x_2-x_1)=z_3-z_1=(x_4-x_3)$, so $t_4-t_2=t_1-t_2$ which leads to a contradiction since $t_1$ and $t_4$ are distinct.

The case $t_5=t_3$ and $t_8=t_2$ follows similarly.

Also similarly $\mid\{t_1,t_2,t_3,t_4\}\mid=3$  is not possible.

So  $\mid\{z_1,z_2,z_3,z_4\}\mid=\mid\{z_5,z_6,z_7,z_8\}\mid=\mid\{t_1,t_2,t_3,
t_4\}\mid=\mid\{t_5,t_6,t_7,t_8\}\mid=4.$

Hence in Case 1-) we may assume WLOG $z_5=z_1$.

\noindent {\bf Case  1-)a-)} Assume $z_6=z_2$.

If $z_7=z_4$, then $z_8=z_3$ and so Equation \eqref{eq:genz12} gives $z_1+z_4=z_2+z_3$ and $z_1+z_3=z_2+z_4$, implying $z_2+z_3-z_4=z_2+z_4-z_3$ and leading to the contradiction $z_3=z_4$ since $a$ is odd.  Thus $z_7=z_3$ and $z_8=z_4$.

Then we have
\begin{eqnarray}
 t_2-t_1=y_2-y_1=z_2-z_1=y_4-y_3=z_4-z_3\nonumber\\
 =t_4-t_3=t_6-t_5=t_8-t_7.\label{eq:Case1a}
\end{eqnarray}

Consider the case where $(t_5,t_8)=(t_i,t_1)$, for $i\in\{2,3\}$ then $(t_6,t_7)=(t_j,t_4)$ or $(t_6,t_7)=(t_4,t_j)$ where $j\in\{2,3\}\setminus\{i\}$. Equation \eqref{eq:gent12} implies $t_i+t_1=t_j+t_4$ and $t_1+t_4=t_2+t_3=t_i+t_j$. Combining these equations  gives $t_4-t_i=t_i-t_4$,  a contradiction since $t_4$ and $t_i$ are distinct.

Thus, we have the following possibilities.

   {\bf i-)} $(t_5,t_8)=(t_4,t_1)$ and $(t_6,t_7)=(t_3,t_2)$.
  By Equation \eqref{eq:Case1a} $t_2-t_1=t_1-t_2$, a contradiction.

   {\bf ii-)} $(t_5,t_8)=(t_2,t_3)$ and $(t_6,t_7)=(t_1,t_4)$. By Equation \eqref{eq:Case1a},
  $t_1-t_2=t_2-t_1$, a contradiction.

   {\bf iii-)} $(t_5,t_8)=(t_2,t_3)$ and $(t_6,t_7)=(t_4,t_1)$. By Equation \eqref{eq:Case1a},
  $t_2-t_1=t_4-t_2$ implying $2t_2=t_1+t_4$ combining with $t_1+t_4=t_2+t_3$ we have $t_2=t_3$, a contradiction.

 {\bf iv-)} $(t_5,t_8)=(t_3,t_2)$ and $(t_6,t_7)=(t_1,t_4)$. Then by Equation \eqref{eq:Case1a},
  $t_2-t_1=t_1-t_3$ implying $2t_1=t_2+t_3$ combining with $t_1+t_4=t_2+t_3$ we have $t_1=t_4$ a contradiction.

   {\bf v-)} $(t_5,t_8)=(t_3,t_2)$ and $(t_6,t_7)=(t_4,t_1)$.  Then by Equations \eqref{eq:gen3} and \eqref{eq:gen4}, we have $(x_2-x_1)=z_3-z_1=z_7-z_5=(x_4-x_3)$. Hence by Equations \eqref{eq:gen5} and \eqref{eq:gen6},  $t_3-t_1=\alpha(x_2-x_1)=\alpha(x_4-x_3)=t_7-t_5=t_1-t_3$, a contradiction.

\noindent {\bf Case 1-)b-)} Assume $z_6=z_3$.

If $z_7=z_4$, $z_2=z_8$ then Equation \eqref{eq:genz12} gives
$z_1+z_4=z_2+z_3$ and $z_1+z_2=z_3+z_4$, implying $2z_2=2z_4$, a contradiction. Thus $z_7=z_2$ and $z_8=z_4$.

Then we have \begin{eqnarray}
 t_4-t_3=t_2-t_1=y_2-y_1=z_2-z_1=z_4-z_3\label{eq:Case1b}\\
 y_4-y_3=z_3-z_1=z_4-z_2=t_6-t_5=t_8-t_7.\label{eq:Case1bi}
\end{eqnarray}
Consider the case where $(t_5,t_8)=(t_i,t_1)$, for $i\in\{2,3\},$ then $(t_6,t_7)=(t_j,t_4)$ or $(t_6,t_7)=(t_4,t_j)$ where $j\in\{2,3\}\setminus\{i\}$.  Equation \eqref{eq:gent12} implies $t_i+t_1=t_j+t_4$ and $t_1+t_4=t_2+t_3=t_i+t_j$. Combining these equations gives $t_4-t_i=t_i-t_4$, a contradiction.

So we have the following possibilities:

  {\bf i-)} $(t_5,t_8)=(t_4,t_1)$ and $(t_6,t_7)=(t_2,t_3)$.

 By Equations \eqref{eq:gen3}, \eqref{eq:gen5} and \eqref{eq:Case1bi}, $x_2-x_1=z_4-z_2$ and $\alpha(x_2-x_1)=t_3-t_1=z_2-z_4$, giving $(x_2-x_1)=-\alpha(x_2-x_1)$ implying $0=(\alpha+1)(x_2-x_1)$. Now as $\alpha+1=\frac{a+1}{2}$ and gcd$(\frac{a+1}{2},a)=1$, we have $x_2=x_1$ a contradiction.

   {\bf ii-)} $(t_5,t_8)=(t_2,t_3)$ and $(t_6,t_7)=(t_1,t_4)$.
  By Equation \eqref{eq:Case1bi}, $z_4-z_3=t_4-t_3=z_2-z_4$, implying $2z_4=z_2+z_3$ and $z_1-z_2=t_1-t_2=z_3-z_1$, implying $2z_1=z_2+z_3$. Hence $2z_1=2z_4$, a contradiction.

   {\bf iii-)}$(t_5,t_8)=(t_2,t_3)$ and $(t_6,t_7)=(t_4,t_1)$.

   By Equations \eqref{eq:gen3}, \eqref{eq:gen5} and \eqref{eq:Case1bi}, $(x_2-x_1)=z_4-z_2$ and $\alpha(x_2-x_1)=t_3-t_1=z_4-z_2$ giving $(x_2-x_1)=\alpha(x_2-x_1)$ so $(\alpha-1)(x_2-x_1)=0$. Now as $\alpha-1=\frac{a-3}{2}$ and $(a,3)=1$, we have $(\alpha-1,a)=1$ and $x_2=x_1$ a contradiction.

     {\bf iv-)} $(t_5,t_8)=(t_3,t_2)$ and $(t_6,t_7)=(t_1,t_4)$.

    By Equations \eqref{eq:gen3}, \eqref{eq:gen5} and \eqref{eq:Case1bi} $(x_2-x_1)=z_4-z_2$ and $\alpha(x_2-x_1)=t_3-t_1=z_2-z_4$ then $(x_2-x_1)=-\alpha(x_2-x_1)$ and $0=(\alpha+1)(x_2-x_1)$. Now as $\alpha+1=\frac{a+1}{2}$ and $(\frac{a+1}{2},a)=1$, we have $x_2=x_1$ a contradiction.

      {\bf v-)} $(t_5,t_8)=(t_3,t_2)$ and $(t_6,t_7)=(t_4,t_1)$.

      By Equation \eqref{eq:Case1bi}, $z_4-z_3=t_4-t_3=z_3-z_1$, implying $2z_3=z_1+z_4$ and  $z_2-z_1=t_2-t_1=z_4-z_2$, implying $2z_2=z_1+z_4$ and leading to a contradiction.

\noindent {\bf Case 1-)c-)} Assume $z_6=z_4$.

If $z_7=z_2$, $z_8=z_3$ then Equation \eqref{eq:genz12} gives
$z_1+z_3=z_2+z_4$ and $z_1+z_4=z_3+z_2$ hence $2z_4=2z_3$, a contradiction.

If $z_7=z_3$, $z_8=z_2$ then Equation \eqref{eq:genz12} gives
$z_1+z_4=z_2+z_3$ and $z_1+z_2=z_3+z_4$ hence $2z_2=2z_4$, a contradiction.

  \noindent {\bf CASE 2-)}

Considering the list of entries $z_1,\dots,z_8$ in Case 2-), as given in Equation \eqref{eq:cases2b}, these entries can be written as a cyclic list, that is, $(z_1,z_2,z_3,z_4,z_5,z_6,z_7,z_8)$. Any even shift will be isomorphic to this list in nature and an odd shifts will interchange $x_i$'s with $y_i$'s in the equations. There exists 4 sets of pairs  $i,j$, $1\leq i<j\leq 8$ such that $z_i=z_j$, where we may say that $z_i$ and $z_j$ are distance $\mid j-i\mid$ apart.
The cyclic nature of the list implies that for all pairs $i,j$ such that $z_i=z_j$ we may take $\mid j-i\mid\leq 4$. Assume $z_1$ has the smallest distance among the $z_i$'s.

 We address the possibilities with the following subcases:

{\bf a-)} Assume $z_j=z_1$ where $j-1=2$ and $z_{j^\prime}=z_2$ where  $j^\prime-2=2$, implying cyclic list
$(z_1,z_2,z_1,z_2,z_3,z_4,z_3,z_4)$. Then   $y_2-y_1=y_3-y_2$ and $y_4-y_3=y_1-y_4$ implying $2y_2=2y_4$ a contradiction.

 Similarly an odd shift will give a list isomorphic to $(z_2,z_1,z_2,z_3,z_4,z_3,z_4,z_1)$. Then we have $x_1-x_2=x_4-x_1$ and $x_2-x_3=x_3-x_4$ implying the contradiction $2x_1=2x_3$

{\bf b-)} Assume $z_j=z_1$ where $j-1=2$ and $z_{j^\prime}=z_2$ where  $j^\prime-2=3$, with cyclic list
$(z_1,z_2,z_1,z_3,z_2,z_4,z_3,z_4)$. Then
$x_1+y_1=x_2+y_2$,
$x_1+y_2=x_3+y_3$,
$x_2+y_3=x_4+y_4$,
$x_3+y_4=x_4+y_1$,
implying
$y_1-y_2=(x_2-x_3)+y_2-y_3$,
$(x_2-x_3)+y_3-y_4=y_4-y_1$.
Then
$2y_4-y_3-y_1=(x_2-x_3)=y_1-2y_2+y_3=$ or equivalently
$2(y_4+y_2)=2(y_3+y_1)$, so $y_4+y_2=y_3+y_1$. On the other hand, we have $x_1+x_2+y_1+y_3=x_2+x_4+y_2+y_4$, Hence $x_1=x_4$, contradiction.

Similarly an odd shift will give a list isomorphic to $(z_2,z_1,z_3,z_2,z_4,z_3,z_4,z_1)$. Then we have $z_3-z_2=y_2-y_3=x_3-x_1+y_4-y_1$ and $z_4-z_1=y_4-y_1=x_3-x_1+y_3-y_2$ or equivalently $2y_4=2y_1$ a contradiction.

{\bf c-)} Assume $z_j=z_1$ where $j-1=2$ and $z_{j^\prime}=z_2$ where  $j^\prime-2=4$, with cyclic list
$(z_1,z_2,z_1,z_3,z_4,z_2,z_4,z_3)$.
Then
$x_1+y_1=x_2+y_2$,
$x_1+y_2=x_3+y_4$,
$x_2+y_3=x_4+y_1$,
$x_3+y_3=x_4+y_4$,
implying $y_1-y_2=(x_2-x_3)+y_2-y_4$ and $(x_2-x_3)=y_1-y_4$. Then $y_1-y_2=y_1-y_4+y_2-y_4$, so $2y_2=2y_4$ leading to a contradiction.

Similarly an odd shift will give a list isomorphic to $(z_2,z_1,z_3,z_4,z_2,z_4,z_3,z_1)$. Then we have $z_2-z_1=y_1-y_2=x_3-x_4+y_3-y_1$ and $z_4-z_3=y_3-y_2=x_3-x_4$ or equivalently $2y_3=2y_1$ a contradiction.

{\bf d-)} Assume $z_j=z_1$ where $j-1=2$ and $z_{j^\prime}=z_2$ where  $j^\prime-2=4$, with cyclic list
$(z_1,z_2,z_1,z_3,z_4,z_2,z_3,z_4)$. Then
$x_1+y_1=x_2+y_2$,
$x_1+y_2=x_3+y_4$,
$x_2+y_3=x_4+y_4$,
$x_3+y_3=x_4+y_1$.
So $y_1-y_2=(x_2-x_3)+y_2-y_4$ and $(x_2-x_3)=y_4-y_1$.
Then $y_1-y_2=y_4-y_1+y_2-y_4$ or equivalently $y_1-y_2=y_2-y_1$,  a contradiction.

Similarly an odd shift will give a list isomorphic to $(z_2,z_1,z_3,z_4,z_2,z_3,z_4,z_1)$. Then we have $z_2-z_1=y_1-y_2=x_3-x_4+y_3-y_1$ and $z_4-z_3=y_3-y_2=x_4-x_3$ or equivalently $2y_2=2y_1$ a contradiction.

{\bf e-)} Assume $z_j=z_1$ where $j-1=3$ and $z_{j^\prime}=z_2$ where  $j^\prime-2=4$, with cyclic list
$(z_1,z_2,z_3,z_1,z_4,z_2,z_3,z_4)$. Then
$x_1+y_1=x_2+y_3$,
$x_1+y_2=x_3+y_4$,
$x_2+y_2=x_4+y_4$,
$x_3+y_3=x_4+y_1$,
implying $y_1-y_2=(x_2-x_3)+y_3-y_4$, $y_4-y_1=(x_2-x_3)+y_2-y_3$. Then $y_4-y_1=y_1-y_2-y_3+y_4+y_2-y_3$ so $2y_1=2y_3$, leading to a contradiction.

Similarly an odd shift will give a list isomorphic to $(z_2,z_3,z_1,z_4,z_2,z_3,z_4,z_1)$. Then we have $z_2-z_3=y_1-y_2=y_3-y_4$ and $z_4-z_1=y_4-y_1=y_3-y_2$ or equivalently $2y_3=2y_4$ a contradiction.

{\bf f-)}
Assume $z_j=z_1$ where $j-1=4$, giving the cyclic list
$(z_1,z_2,z_3,z_4,z_1,z_2,z_3,z_4)$. Then
$x_1+y_1=x_3+y_3$,
$x_1+y_2=x_3+y_4$,
$x_2+y_2=x_4+y_4$,
$x_2+y_3=x_4+y_1$
implying $y_1-y_2=y_3-y_4$ and $y_2-y_3=y_4-y_1$. Then $y_1-y_3=y_3-y_1$, a contradiction.

Similarly an odd shift will give a list isomorphic to $(z_2,z_3,z_4,z_1,z_2,z_3,z_4,z_1)$. Then we have $z_2-z_3=y_1-y_2=y_3-y_4$ and $z_4-z_1=y_4-y_1=y_2-y_3$ or equivalently $2y_2=2y_4$ a contradiction.

{\bf g-)} $z_j=z_1$ where $j-1=3$ and $z_{j^\prime}=z_2$ where  $j^\prime-2=3$, with cyclic list
$(z_1,z_2,z_3,z_1,z_4,z_3,z_2,z_4)$. Observe that all $z_i$ have distance 3 in the cyclic list. Then
$x_1+y_1=x_2+y_3$,
$x_1+y_2=x_4+y_4$, implying $y_1-y_2=(x_2-x_4)+y_3-y_4$

Finally consider the list of entries $t_1,\dots,t_8$ as given in Equation \eqref{eq:cases2b}. Again, these entries can be written as a cyclic list, that is, $(t_1,t_2,t_3,t_4,t_5,t_6,t_7,t_8)$.

The above arguments can be repeated for this cyclic list where any occurrence of $x_i$ in an equation is replaced by $\alpha x_i$. Thus, it can be argued that the distinct entries $t_1,t_2,t_3,t_4$ have distance $3$ in the above cyclic list. Now considering the RC-constraint we have $t_6=t_1$. Then $t_2=t_5$, $t_3=t_8$ and $t_4=t_7$. Hence we have the list $(t_1,t_2,t_3,t_4,t_2,t_1,t_4,t_3)$. But then we have
$\alpha x_1+y_1=k_2x_3+y_4$,
$\alpha x_1+y_2=k_2x_3+y_3$,
implying $y_1-y_2=y_4-y_3$. Hence  $2(y_1-y_2)=(x_2-x_4)$ and combining with $\alpha x_2+y_2=\alpha x_4+y_1$ we have $2\alpha(x_2-x_4)=(x_2-x_4)$. Hence $(2\alpha-1)(x_2-x_4)=(a-2)(x_2-x_4)=0$, a contradiction since gcd$(a-2,a)=1$.

Similarly an odd shift will give a list isomorphic to $(z_2,z_3,z_1,z_4,z_3,z_2,z_4,z_1)$. And the list of $t_i$ as $(t_2,t_3,t_4,t_2,t_1,t_4,t_3,t_1)$. Then we have $z_2-z_3=y_1-y_2=y_4-y_3$ and  $t_2-t_3=y_1-y_2=\alpha (x_2-x_4)+y_3-y_4$ implying $2(y_1-y_2)=\alpha (x_2-x_4)$ Now combining with $z_1=x_2+y_2=x_4+y_1$ we have $\alpha(y_1-y_2)=2(y_1-y_2)$ or equivalently $(\alpha-2)(y_1-y_2)=0$. But as $\alpha-2=\frac{a-5}{2}$ and gcd$(a,5)=1$ we have $y_1=y_2$ a contradiction.
\end{proof}
The proof of this theorem can be readily generalized for any $\alpha$ with required properties:

\begin{lemma}
Let $H$ be the parity-matrix based on the DM$(3;a)$ given in Equation \eqref{eqDM} , where $(a,\alpha-2+i)=1$ for all $0\leq i\leq 3$; and   $(a,2\alpha-1)=1$. Then $H$ is the parity-check matrix of a $(a^2,4,a)$-regular binary LDPC code with minimum distance at least $10$.
\end{lemma}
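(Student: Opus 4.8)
The plan is to recognize this lemma as the abstraction of Theorem \ref{Min10}: every place where that proof exploited the concrete value $\alpha=\frac{a-1}{2}$ together with $\gcd(a,3)=1$ and $\gcd(a,5)=1$ will be replaced by the corresponding coprimality condition listed here. Accordingly I would re-run the proof of Theorem \ref{Min10} essentially verbatim, auditing each step for its dependence on the special form of $\alpha$ and substituting the appropriate hypothesis.

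First I would confirm that Equation \eqref{eqDM} still defines a valid DM$(3;a)$ in standard form, which (as already noted before Theorem \ref{LDPC_pseudo2}) requires only $\gcd(\alpha,a)=1$ and $\gcd(\alpha-1,a)=1$; these are the cases $i=2$ and $i=1$ of the hypothesis. They also force $a$ to be odd, since $\alpha$ and $\alpha-1$ cannot both be coprime to an even modulus, so the cancellation rule ``$2k\equiv 2l\pmod a \Rightarrow k\equiv l\pmod a$'' used throughout the original argument remains valid. Given a genuine DM$(3;a)$, Lemma \ref{lem:H} and Lemma \ref{girth} supply the RC-constraint, regularity, and girth, so only the minimum distance requires real work.

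For the minimum distance I would follow the skeleton of Theorem \ref{Min10} unchanged. The distance is even because each column meets the first $a$ rows exactly once; assuming a dependent set of $8$ columns, the whole case analysis (some $x_i$ or $y_i$ appearing four times, then Case $1$ and Case $2$ with all their subcases) depends only on the relations $x_i+y_i=z_i$ and $\alpha x_i+y_i=t_i$ and on the cancellation rule, so it transfers without alteration. The value of $\alpha$ intervened only through four divisibility facts, each of which I would now cite directly: $\gcd(a,\alpha+1)=1$ (case $i=3$) in the subcases (i) and (iv) of Case $1$-b; $\gcd(a,\alpha-1)=1$ (case $i=1$) in subcase (iii) of Case $1$-b; $\gcd(a,2\alpha-1)=1$ in the penultimate step of Case $2$, where the original proof silently rewrote $2\alpha-1=a-2$; and $\gcd(a,\alpha-2)=1$ (case $i=0$) in the concluding $t_i$ analysis of Case $2$, where it used $\alpha-2=\frac{a-5}{2}$ and $\gcd(a,5)=1$.

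I expect the difficulty here to be bookkeeping rather than any new idea: the hazard is an overlooked implicit use of $\alpha=\frac{a-1}{2}$. I would therefore scan every line of Theorem \ref{Min10} that rewrote an expression in $\alpha$ as one in $a$ (such as $\alpha+1=\frac{a+1}{2}$ or $2\alpha-1=a-2$), since these are exactly the points at which the specialization was invoked, and re-express each in terms of $\alpha$ alone. Verifying that the five stated conditions $\gcd(a,\alpha-2+i)=1$ for $0\le i\le 3$ together with $\gcd(a,2\alpha-1)=1$ exhaust all such uses is the crux; I would note in particular that $\gcd(a,2\alpha-1)=1$ is not implied by the four consecutive conditions (for instance a prime $p=5\mid a$ with $\alpha\equiv 3\pmod 5$ satisfies all four yet has $5\mid 2\alpha-1$), which is precisely why it must be imposed separately.
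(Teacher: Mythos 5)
Your proposal matches the paper's own treatment exactly: the paper proves this lemma simply by remarking that the proof of Theorem \ref{Min10} ``can be readily generalized for any $\alpha$ with required properties,'' and your audit correctly locates every point where the specialization $\alpha=\frac{a-1}{2}$ entered --- $\gcd(a,\alpha+1)=1$ in subcases (i) and (iv) of Case 1-b, $\gcd(a,\alpha-1)=1$ in subcase (iii), $\gcd(a,2\alpha-1)=1$ and $\gcd(a,\alpha-2)=1$ at the end of Case 2, plus $\gcd(a,\alpha)=\gcd(a,\alpha-1)=1$ for Equation \eqref{eqDM} to yield a valid DM$(3;a)$. Your added observations (that the hypotheses force $a$ odd, so the cancellation $2k\equiv 2l \Rightarrow k\equiv l$ survives, and that the $2\alpha-1$ condition is genuinely independent of the four consecutive ones) are correct and in fact echo the paper's own closing remark that coprimality to $5$ is unavoidable.
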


Note that gcd$(a,5)=1$ is always going to be a necessary condition for any chosen $\alpha$; since we require $a$ to be relatively prime to $4$ consecutive numbers and $2\alpha-1$ in the statement.
An appendix contains supplementary information that is not an essential part of the text itself but which may be helpful in providing a more comprehensive understanding of the research problem or it is information that is too cumbersome to be included in the body of the paper.
\section{Quasi-Cyclic Form}\label{QCForm}
\begin{theorem}\label{th:QC}
Let $a\geq 5$ be an odd integer and $H$ be the parity-check matrix constructed in Equation \eqref{eq:H} based on the DM$(3;a)$ given by Equation \eqref{eqDM} with $\alpha=\frac{a-1}{2}$. Then there exists $H^*$ obtained from $H$ by row and column permutations that is the parity-check matrix of a $(a^2,4,a)$-regular binary QC-LDPC code.
\end{theorem}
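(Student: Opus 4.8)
The plan is to read $H$ as the point--line incidence matrix of a net on $\mathbb{Z}_a\times\mathbb{Z}_a$ and to reach the quasi-cyclic form through a single invertible linear change of coordinates on this point set. Indexing columns by $b=va+q\leftrightarrow(v,q)$, Lemma~\ref{lem:w_c=4} together with $D(v,1)=v$, $D(v,2)=\alpha v$ shows that column $(v,q)$ meets the four row-blocks $V_{-1},V_0,V_1,V_2$ in the local rows $v,\ q,\ q+v,\ q+\alpha v$. Thus each row-block is a parallel class of lines in $\mathbb{Z}_a^2$: the all-ones block $R_v$ is the vertical class $\{v=\mbox{const}\}$ (slope $\infty$), and the blocks $I$, $P^v$, $P^{\alpha v}$ are the classes of slope $0,-1,-\alpha$. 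The target $H^*$ is the net with slopes $0,-1,-2^{-1},-2$. Since a row permutation relabels the lines and a column permutation relabels the points, it suffices to produce $T\in GL_2(\mathbb{Z}_a)$ carrying the slope multiset $\{\infty,0,-1,-\alpha\}$ onto $\{0,-1,-2^{-1},-2\}$; because $T$ takes lines to lines, the transformed matrix is again built from permutation blocks, and finiteness of all four image slopes will make every block circulant.

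First I would record the identities forced by $\alpha=\frac{a-1}{2}$: $2\alpha\equiv-1$, hence $2^{-1}\equiv\alpha+1$ and $-\alpha\equiv\alpha+1\pmod a$, while $2\in\mathbb{Z}_a^{\times}$ since $a$ is odd. Then I would introduce the explicit map $T(v,q)=(v+2q,\,-2q)$, with $\det T=-2$ a unit, so $T\in GL_2(\mathbb{Z}_a)$ and it permutes the $a^2$ columns. Writing $(v',q')=(v+2q,-2q)$, i.e. $v=v'+q'$ and $q=-2^{-1}q'$, a one-line substitution sends the four defining forms $v,\ q,\ v+q,\ \alpha v+q$ to $v'+q',\ -2^{-1}q',\ v'+2^{-1}q'$ and $\alpha v'-q'$; equivalently $T$ maps the vertical (all-ones) class to $\{v'+q'=\mbox{const}\}$, the $I$-class to $\{q'=\mbox{const}\}$, the $P^v$-class to $\{v'+2^{-1}q'=\mbox{const}\}$ and the $P^{\alpha v}$-class to $\{\alpha v'-q'=\mbox{const}\}$, of slopes $-1,0,-2,\alpha$ respectively.

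Next I would restore genuine circulants by relabelling the rows of each transformed block by one affine map of $\mathbb{Z}_a$, the same for every column-block (hence a legitimate row permutation): the identity on the image of $R_v$, and $c\mapsto-2c$, $c\mapsto 2c$, $c\mapsto-c$ on the images of $I$, $P^v$, $P^{\alpha v}$ respectively. Using $2^{-1}\equiv\alpha+1$ and $-\alpha\equiv\alpha+1$, these turn the blocks into $P^{v'}$, $I$, $P^{2v'}$ and $P^{(\alpha+1)v'}=P^{2^{-1}v'}$. Re-ordering the four row-blocks into the displayed order $I,\ P^{v'},\ P^{2^{-1}v'},\ P^{2v'}$ reproduces $H^*$ exactly, with block exponents $0,\ v',\ 2^{-1}v',\ (\alpha+1)^{-1}v'=2v'$; concretely $H^*=\Pi_{\mathrm{row}}H\Pi_{\mathrm{col}}$ for permutation matrices $\Pi_{\mathrm{row}},\Pi_{\mathrm{col}}$, and is quasi-cyclic by inspection.

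The delicate point is the all-ones blocks $R_v$. A permutation that merely reorders column-blocks and cyclically shifts within them can never convert an all-ones block into a permutation block, so the vertical class forces $T$ to be genuinely shearing, sending slope $\infty$ to a finite slope. Accordingly the two things I would check with care are that $T$ sends none of the remaining slopes $0,-1,-\alpha$ to $\infty$ (so that every image class is a permutation block rather than another all-ones block) and that grouping the columns by the new block index $v'=v+2q$ really partitions them into $a$ blocks of size $a$; both reduce to $1,2,-2\in\mathbb{Z}_a^{\times}$, which is precisely where odd $a$ enters and why no primality of $a$ is required.
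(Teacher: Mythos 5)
Your proposal is correct and takes essentially the same route as the paper: there, too, the column permutation is an invertible linear map of $\mathbb{Z}_a\times\mathbb{Z}_a$ (namely $(p,q)\mapsto(q-p,p)$, realized as $f(pa+q)=(q-p \bmod a)a+p$) and the row permutation acts block-row-wise by multiplication by $2^{-1}$ and $(\alpha+1)^{-1}$, exactly matching your ``coordinate change plus per-block affine relabelling'' scheme. The only differences are cosmetic: your shear $T(v,q)=(v+2q,-2q)$ sends the slope list $(\infty,0,-1,-\alpha)$ to $(-1,0,-2,-2^{-1})$ and therefore needs a final reordering of the four block rows, whereas the paper's choice of linear map lands the four parallel classes in the displayed order directly.
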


We permute the rows and columns of the matrix $H$ constructed in Equation \eqref{eq:H}, based on the DM$(3;a)$ given by Equation \eqref{eqDM} to obtain $H^*$ which is a parity-check matrix of a QC-LDPC code. As we will use only row and column permutations the properties such as rate and the minimum distance of the code will not change.

First define the permutation $f$ on the columns $i$, $0\leq i\leq a^2$ of $H$ as $f(pa+q)=(q-p \mod\ a)a+p$ for all $0\leq p,q\leq a-1$. Then define the permutation $g$ on the rows $r_{i}$, $0\leq i\leq 4a-1$ of $H$ as \begin{eqnarray}\label{eq:DM}
g(r_i)=\left\{\hspace{-0.1cm}\begin{array}{ll}
r_i,&\mbox{if }r_i\leq 2a-1,\\
(r_i-2a)2^{-1}+2a,&\mbox{if }2a\leq r_i\leq 3a-1, \\
(r_i-3a)(\alpha+1)^{-1}+3a &\mbox{if }3a\leq r_i\leq 4a-1.\\
\end{array}
\right.
\end{eqnarray}
Then $g(f(H))=H^*$ is the parity-check matrix of a QC-LDPC code. To see this, let $C^*_{i}$ be the set of rows which have the entry $1$ in the column $i$ in the matrix $H^*$. Now as $f^{-1}(ap+q)= qa+ (p+q\mod a)$, it is not hard to compute that
\begin{eqnarray}
C^*_{ap+q}=\{q, (p+q \mod a)+a,\nonumber\\
(p.2^{-1}+q \mod a)+2a,\nonumber\\
(p(\alpha+1)^{-1}+q\mod a)+3a\}\nonumber
\end{eqnarray}
for $0\leq p,q\leq a-1$.

Hence after the permutations are applied the resulting matrix will have the following form:

\begin{eqnarray*}
H^*=\left[
\begin{array}{ccccccccccccc}
I&I&I&I&...&I\\
I&P^1&P^2&P^3&...&P^{a-1}\\
I&P^{2^{-1}}&P^{2*2^{-1}}&P^{3*2^{-1}}&...&P^{(a-1)*2^{-1}}\\
I&P^{(\alpha+1)^{-1}}&P^{2*(\alpha+1)^{-1}}&P^{3*(\alpha+1)^{-1}}&...&P^{(a-1)*(\alpha+1)^{-1}}\\
\end{array}\right].
\end{eqnarray*}

\begin{example}\label{ex:H25}
Let $a=5$. Then choose $\alpha=5-1/2=2$. In $\mathbb{Z}_5$ we have $2^{-1}=3$.
After the permutations are applied on $H$,  $H^*$ will have the form:
\begin{eqnarray*}
H^*=\left[
\begin{array}{ccccccccccccc}
I&I&I&I&I\\
I&P^1&P^2&P^3&P^4\\
I&P^3&P^1&P^4&P^2\\
I&P^2&P^4&P^1&P^3\\
\end{array}\right],
\end{eqnarray*}
which is
\begin{eqnarray*}
H^*=\left[
\begin{array}{ccccccccccccc}
I&I&I&I&I\\
I&P^1&P^2&P^3&P^4\\
I&P^{2^{-1}}&P^{2*2^{-1}}&P^{3*2^{-1}}&P^{4*2^{-1}}\\
I&P^{3^{-1}}&P^{2*3^{-1}}&P^{3*3^{-1}}&P^{4*3^{-1}}\\
\end{array}\right].
\end{eqnarray*}
\end{example}
Note that $3^{-1}=(\alpha+1)^{-1}$. 


\section{Parity-Check Matrices}\label{PCM}

\begin{figure}[ht!]
	\centering
	\includegraphics[width=\linewidth]{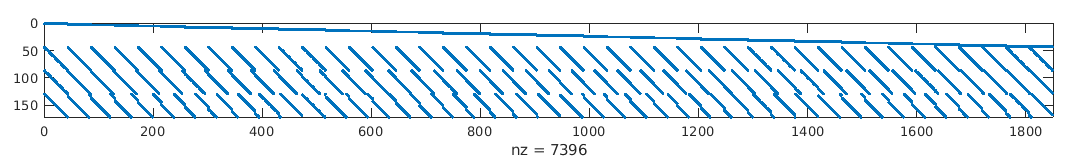}
	\caption{The parity-check matrix of the proposed code DM(3;43) $[1849, 0.91, 4, 43]$.}
	\label{fig:dm43}
\end{figure}
\begin{figure}[ht!]
	\centering
	\includegraphics[width=\linewidth]{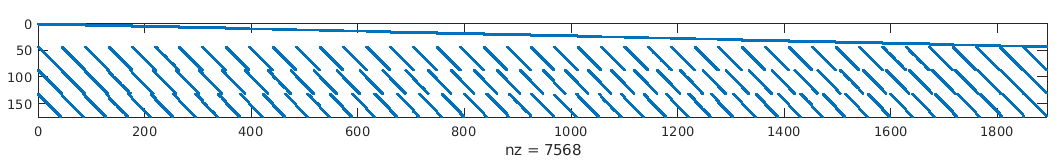}
	\caption{The parity-check matrix of the proposed code DCA(3;44) $[1892, 0.91, 4, 43]$.}
	\label{fig:dca44}
\end{figure}
\begin{figure}[ht!]
	\centering
	\includegraphics[width=\linewidth]{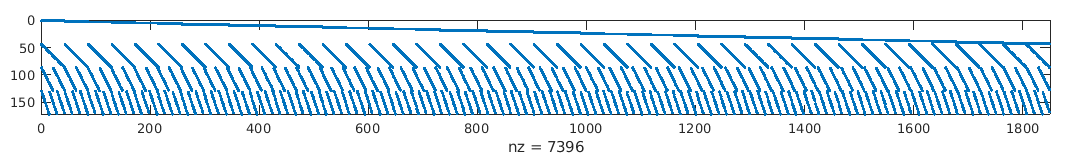}
	\caption{The parity-check matrix of TD-LDPC $[1849, 0.91, 4, 43]$. }
	\label{fig:td-ldpc}
\end{figure}
\begin{figure}[ht!]
	\centering
	\includegraphics[width=\linewidth]{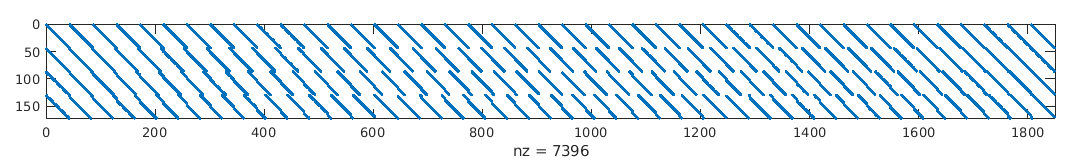}
	\caption{The parity-check matrix of Lattice $[1849, 0.91, 4, 43]$.}
	\label{fig:lattice}
\end{figure}
\begin{figure}[ht!]
	\centering
	\includegraphics[width=\linewidth]{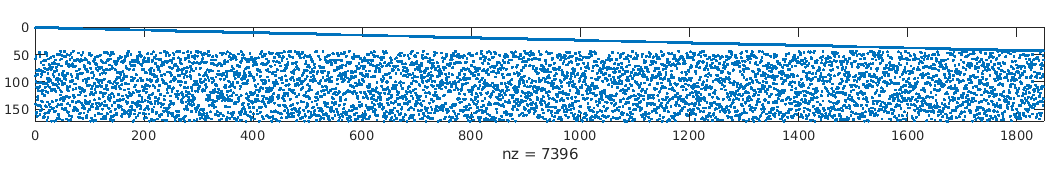}
	\caption{The parity-check matrix of Gallager $[1849, 0.91, 4, 43]$.}
	\label{fig:gallager}
\end{figure}
\begin{figure}[ht!]
	\centering
	\includegraphics[width=\linewidth]{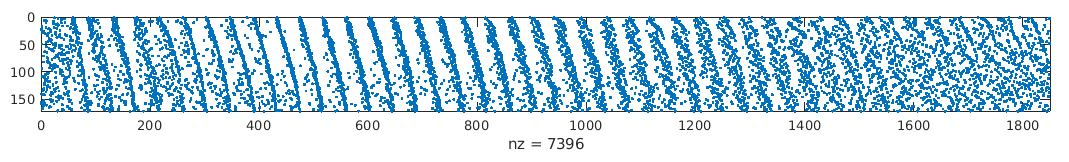}
	\caption{The parity-check matrix of PEG $[1849, 0.91, 4, \sim 43]$.}
	\label{fig:peg}
\end{figure}
\begin{figure}[ht!]
	\centering
	\includegraphics[width=\linewidth]{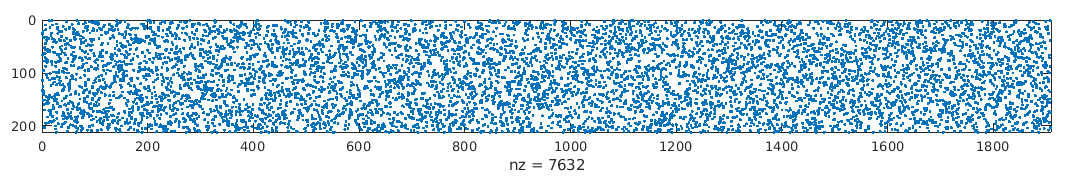}
	\caption{The parity-check matrix of Mackay\&Neal $[1908, 0.89, 4, 36]$.}
	\label{fig:mackay&neal}
\end{figure}
\end{appendices}


\bibliography{sn-bibliography}



\end{document}